\documentclass[bibliography=totocnumbered]{scrartcl}
\usepackage[utf8]{inputenc}
\usepackage{hyperref}
\usepackage[english]{babel}
\usepackage{mathtools}
\usepackage{tensor}
\usepackage{enumerate}
\usepackage{aliascnt}
\usepackage{amssymb}
\usepackage{amsmath}
\usepackage{verbatim}

\usepackage{amsthm}

\setlength{\parindent}{0pt}  
\allowdisplaybreaks[1] 

\newtheorem{prop}{Proposition}[section]

\newaliascnt{lemma}{prop} 
\newtheorem{lemma}[lemma]{Lemma}

\aliascntresetthe{lemma} 

\newaliascnt{mydef}{prop} 
 
\aliascntresetthe{mydef} 

\newaliascnt{corol}{prop} 
 \newtheorem{corol}[corol]{Corollary}
 
\aliascntresetthe{corol}

\newaliascnt{remark}{prop} 
 \newtheorem{remark}[remark]{Remark}
 
\aliascntresetthe{remark} 

\newaliascnt{mythm}{prop} 
 \newtheorem{mythm}[mythm]{Theorem}
 
\aliascntresetthe{mythm} 



\def\equationautorefname~#1\null{%
  (#1)\null
}

\newcommand{\N}{\ensuremath{\mathbb{N}}}
\newcommand{\Z}{\ensuremath{\mathbb{Z}}}
\newcommand{\Sph}{\ensuremath{\mathbb{S}}}
\newcommand{\Hyp}{\ensuremath{\mathbb{H}}}
\newcommand{\R}{\ensuremath{\mathbb{R}}}
\newcommand{\E}{\ensuremath{\mathcal{E}}}

\newcommand*\diff{\mathop{}\!\mathrm{d}}
\newcommand{\Kapp}{\ensuremath{\vec{\kappa}}}

\newcommand{\defeq}{\vcentcolon=}
\newcommand{\eqdef}{=\vcentcolon}
\newcommand{\CalC}{\ensuremath{\mathcal{C}}}
\newcommand{\Xcalnull}{\ensuremath{
{\mathcal{X}}{} }}
\newcommand{\Ycalnull}{\ensuremath{
{\mathcal{Y}}{} }}

\newcommand{\arccosh}{{\operatorname{arccosh}}}
\newcommand{\esssup}{{\operatorname{esssup}}}
\newcommand{\HS}[2][\ensuremath{4+\alpha}]
{\ensuremath{ H^{#1;#2} }}%

\newcommand{\st}{ \ensuremath{|}}
\title{The elastic flow of curves\\ 
in the hyperbolic plane}
\date{June 21, 2017}
\author{Anna Dall'Acqua\thanks{Universit\"at Ulm, Helmholtzstra\ss e 18, 89081 Ulm, Germany. 
\texttt{anna.dallacqua@uni-ulm.de}}, Adrian Spener\thanks{Universit\"at Ulm, Helmholtzstra\ss e 18, 89081 Ulm, Germany. 
\texttt{adrian.spener@uni-ulm.de}}}
\begin{document}

\maketitle

\begin{abstract}
We consider closed curves in the hyperbolic space
 moving by the $L^2$-gradient flow of the elastic energy and prove well-posedness and 
 long time existence. 
 Under the additional penalisation of the length we show subconvergence to critical points. A motivation for the study of this flow is given by the relation between elastic curves in the hyperbolic plane and Willmore  surfaces of revolution.
\end{abstract}


\bigskip
\noindent \textbf{Keywords:} Elastic flow, hyperbolic plane, Willmore surfaces of revolution, geometric evolution equation
 \bigskip
 
 \noindent \textbf{MSC(2010)}: 53C44 (primary), 35K55, 35K46 (secondary)



\section{Introduction}

Let $f \colon \mathbb{S}^1 \to M$ be a smooth immersion of a closed curve 
in a smooth Riemannian $n$-dimensional manifold $(M^n,g)$ of constant sectional curvature $S_0$. In analogy to the Bernoulli model of an elastic rod in the Euclidean case we define its elastic energy as 
\begin{equation}
 \label{eq:defElasticEnergyRod}
 \E(f) = \frac{1}{2} \int_{\mathbb{S}^1} |\Kapp|^2 _g \diff s.
\end{equation}
Here $\diff s = |\partial_x f|_g \diff x$ 
 and the geodesic curvature $\Kapp$ is given as $ \Kapp = \nabla_{\partial_s f} \partial_s f$, 
i.e. the covariant derivative of $\partial_s f$ with respect to itself where $\partial_s f = \frac{1}{|\partial_xf|_g}\partial_xf \in \mathcal{T}(f)$ is the unit velocity vector field along $f$.

Critical points of the elastic energy subject to a length constraint or with a length penalisation are called  \textit{free elastica} and have been studied for instance in \cite{LangerSinger1}. These curves satisfy the equation
\begin{equation}
  \label{eq:gradientE}
  \nabla_{L^2}\E(f) = (\nabla_{\partial_s}^\bot)^2 \Kapp + \frac{1}{2} |\Kapp|^2_g \Kapp + S_0 \Kapp = 0 \,,
 \end{equation}
where $\nabla_{\partial_s}^\bot$ denotes the projection of the covariant derivative $\nabla_{\partial_s f}$ onto the subspace orthogonal to $\partial_s f$ (see \cite{LangerSinger1} and \autoref{rmk:proofGrad} below). \textit{Elastica} are critical point of the energy
\begin{equation}
 \label{eq:defElasticEnergy}
 \E_\lambda(f) = \frac{1}{2} \int_{\mathbb{S}^1} (|\Kapp|^2 _g + 2\lambda)\diff s ,
\end{equation}
with $\lambda>0$ and satisfy the equation
\begin{equation}
  \label{eq:gradientElambda}
  \nabla_{L^2}\E_{\lambda}(f) = (\nabla_{\partial_s}^\bot)^2 \Kapp + \frac{1}{2} |\Kapp|^2_g \Kapp - \lambda \Kapp + S_0 \Kapp = 0 \,.
 \end{equation}
Elastic curves are not only fundamental in the theory of mechanics and the calculus of variations (\cite{MR714991}), but have also modern applications, see for instance \cite{Mumford1994}.

In this work we study the gradient flow associated to the energy $\E_{\lambda}$ given by
\begin{equation}
  \label{eq:flow}
 \partial_t f =  -(\nabla_{\partial_s}^\bot)^2 \Kapp - \frac{1}{2} |\Kapp|^2_g \Kapp -S_0\Kapp + \lambda \Kapp ,
 \end{equation}
for sufficiently smooth immersions $f\colon \mathbb{S}^1 \times [0,T) \rightarrow M$.  A solution describes how an initial regular curve evolves in time reducing the energy $\E_\lambda$ in the direction of the steepest descent.
 This evolution has been studied in Euclidean space for instance in \cite{Wen,Koiso,DKS,OkabeNovaga}. Here we consider the case that $M$ is {two-dimensional} and the hyperbolic half-plane. In this situation, the sectional curvature is constant and equal to $-1$. It is quite natural to see if the negative curvature give rise to new phenomena. Another motivation for the study of the elastic flow in the hyperbolic space is given by the connection between elastic curves and Willmore surfaces of revolution, see Paragraph \ref{subs:revol} below. This connection has been used intensively to study various problems associated to the Willmore energy of surfaces of rotation with the elastic energy of curves in the hyperbolic space, for instance classification of rotational symmetric minimisers (\cite{LangerSinger1,LangerSinger2,BrGr}), blow-ups of Willmore surfaces of rotation (\cite{Blatt}) or the Dirichlet problem for rotational Willmore surfaces (\cite{MR2480063,MR2770424, MR2729304, 2017arXiv170502177M}).
 \\
Here we give a self-contained and complete description of the elastic flow of closed curves in the hyperbolic half plane. Below we state our main result.
 The similar case when the ambient manifold is $\mathbb{R}^n$ with the Euclidean metric has been studied in \cite{DKS} for closed curves and, for instance, in \cite{MR2911840,AnnaPaola1,ChunAnnaPaola}  for open curves.
\begin{mythm}\label{mainThm}
 Let $\Hyp^2$ be the hyperbolic half space, 
$f_0: \mathbb{S}^1\rightarrow \Hyp^2$ be a given smooth, regular and closed
 curve and $\lambda \geq 0$.
\begin{enumerate}[(i)]
 \item  There exists a smooth global solution $f:  \mathbb{S}^1 \times [0,\infty) \rightarrow \Hyp^2$ of the initial boundary value problem  \begin{equation}
  \label{eq:stat}
 \left\{ \begin{array}{ll}\partial_t f = - \nabla_{L^2}\E_\lambda(f) = -(\nabla_{\partial_s}^\bot)^2 \Kapp - \frac{1}{2} |\Kapp|^2_g \Kapp -S_0\Kapp + \lambda \Kapp , & \mbox{ in }\mathbb{S}^1 \times (0,T), \\
f(x, 0) = f_0(x), & \mbox{ for } x \in \mathbb{S}^1.
\end{array}\right. 
 \end{equation}
\item Moreover, if $\lambda>0$, as $t_{i} \to \infty$ there exists real values $p_i \in \R$, $\alpha_i > 0$ such that the curves $\alpha_i (f(t_{i, \cdot})-(p_i,0))$  subconverge, when reparametrised with constant speed, to a critical point of $\E_\lambda$, that is to a solution of \eqref{eq:gradientElambda}.
\item If $f_0$ is merely in $\CalC^{5,\alpha}$ there still exists a smooth solution on $(0,\infty)$ satisfying $f(\cdot, t) \to f_0$ 
in $\CalC^{1,\alpha}$ as $t \searrow 0$, and (ii) holds.
\end{enumerate}
\end{mythm}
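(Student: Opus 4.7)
The argument follows the standard scheme for fourth-order geometric parabolic flows and closely parallels the Euclidean treatment of \cite{DKS}, the main novelty being that the constant sectional curvature $S_0 = -1$ forces one to use the isometries of $\Hyp^2$ to keep the image of the curve under control. First I would establish short-time existence by breaking the reparametrization invariance of \eqref{eq:flow}, either by a DeTurck-type tangential correction or by writing the solution as a normal graph $f = f_0 + \varphi\,\nu_0$ over the initial curve. This reduces the system to a quasilinear, strongly parabolic fourth-order scalar equation for $\varphi$, whose coefficients depend smoothly on $f$ so long as $f$ stays in a compact subset of $\Hyp^2$. Short-time existence and uniqueness in the parabolic H\"older space $\CalC^{4+\alpha,1+\alpha/4}(\Sph^1\times[0,T])$ then follow from the corresponding linear Schauder estimates combined with a contraction argument. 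The same construction starting from $f_0\in \CalC^{5,\alpha}$ gives the low-regularity short-time part of (iii), with $f(\cdot,t)\to f_0$ in $\CalC^{1,\alpha}$ as $t\searrow 0$, while parabolic smoothing makes the solution smooth for $t>0$.

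For the a priori bounds I would first use that $\E_\lambda(f(\cdot,t))$ is non-increasing, so that $\|\Kapp\|_{L^2(\diff s)}\le C$ and, when $\lambda>0$, also $L(f(\cdot,t))\le \E_\lambda(f_0)/\lambda$. Differentiating the evolution equations for $|(\nabla_{\partial_s}^\bot)^m \Kapp|_g^2$ and integrating by parts produces a hierarchy
\[
 \frac{\diff}{\diff t}\int_{\Sph^1}|(\nabla_{\partial_s}^\bot)^m\Kapp|_g^2\diff s + c\int_{\Sph^1}|(\nabla_{\partial_s}^\bot)^{m+2}\Kapp|_g^2\diff s \le C_m ,
\]
which closes by Gagliardo--Nirenberg interpolation on $\Sph^1$. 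The extra zero-order contributions coming from $S_0=-1$ are absorbed using the bounds already in hand, and the terms $\lambda\Kapp - S_0\Kapp$ of \eqref{eq:flow} act as benign damping. This yields uniform-in-time bounds on all intrinsic covariant derivatives of $\Kapp$.

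The hard part will be to upgrade these intrinsic estimates to ambient smooth bounds, since they do not by themselves prevent the image of $f$ from approaching the ideal boundary $\{y=0\}$ or from collapsing, in either of which cases the hyperbolic coefficients would degenerate. To this end I would exploit that the horizontal translations $(x,y)\mapsto(x-p,y)$ and the dilations $(x,y)\mapsto\alpha(x,y)$ are isometries of $\Hyp^2$ that preserve \eqref{eq:flow} up to a rescaling of time: at each $t$ one chooses $p(t)$ and $\alpha(t)$ so that the normalised curve lies in a fixed compact subset of $\Hyp^2$. Combined with the length control this promotes the intrinsic estimates to uniform ambient $\CalC^k$ bounds on the normalised family, rules out finite-time singularities, and yields part (i).

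For part (ii), the dissipation identity $\int_0^\infty \|\nabla_{L^2}\E_\lambda(f(\cdot,t))\|_{L^2}^2 \diff t < \infty$ produces times $t_i\to\infty$ along which $\|\nabla_{L^2}\E_\lambda(f(\cdot,t_i))\|_{L^2}\to 0$. Applying the normalisation $f\mapsto \alpha_i(f-(p_i,0))$ and reparametrising by constant speed, the uniform $\CalC^\infty$ estimates together with Arzel\`a--Ascoli yield subsequential smooth convergence to an immersion $f_\infty\colon\Sph^1\to\Hyp^2$; continuity of $\nabla_{L^2}\E_\lambda$ in the smooth topology then forces $f_\infty$ to solve \eqref{eq:gradientElambda}. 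The remaining content of (iii) is obtained by combining the low-regularity short-time existence with the long-time and subconvergence arguments applied on $(0,\infty)$.
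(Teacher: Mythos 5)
Your overall architecture --- normal-graph reduction plus Schauder theory for short-time existence, energy decay plus an interpolation hierarchy for the covariant derivatives of $\Kapp$, the isometries of $\Hyp^2$ for normalisation, and dissipation plus Arzel\`a--Ascoli for subconvergence --- matches the paper's. But two steps need correction or completion. First, dilations $(x,y)\mapsto\alpha (x,y)$ are \emph{isometries} of $\Hyp^2$ and preserve \eqref{eq:flow} exactly, with no rescaling of time (Remark \ref{rem:scaling}); the ``up to a rescaling of time'' is Euclidean intuition. More seriously, for part (i) the time-dependent normalisation does not by itself ``rule out finite-time singularities'': uniform ambient bounds on $\alpha(t)\bigl(f(t)-(p(t),0)\bigr)$ control the original family only if you also show that $\alpha(t)$ stays bounded away from $0$ and $\infty$ and $p(t)$ stays bounded as $t\nearrow T_{\max}$, or else argue by restarting the flow from a normalised curve, invoking uniqueness and a lower bound on the existence time depending only on norms of the data. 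The paper sidesteps this entirely: on a \emph{finite} maximal interval the velocity $\partial_tf=-\nabla_{L^2}\E_\lambda(f)$ is uniformly bounded once the curvature estimates are in hand, so the curve traverses a bounded hyperbolic distance and remains in a fixed hyperbolic ball, which by Remark \ref{rem:stern} is a compact subset of the half-plane; the isometries are needed only for the subconvergence in (ii), where $t\to\infty$. You should also record where the uniform \emph{lower} bound on the length comes from (Fenchel's theorem in $\Hyp^2$, Theorem \ref{Thm:lengthbelow}, combined with Cauchy--Schwarz and the energy bound), since your interpolation constants degenerate as the length tends to zero.

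Second, in (ii) the integrability of $t\mapsto\|\nabla_{L^2}\E_\lambda(f(t))\|_{L^2}^2$ only produces \emph{some} sequence of times along which the gradient vanishes. To conclude that the limit of the normalised, reparametrised curves along an arbitrary sequence $t_i\to\infty$ is a critical point one must exclude oscillation of this quantity, which the paper does by bounding its time derivative uniformly (using the evolution equation \eqref{eq:sabbath} for $\nabla_{\partial_t}^\bot V$ and the uniform curvature estimates), so that the full limit as $t\to\infty$ is zero. As written, your argument proves the conclusion only for a specially chosen sequence of times.
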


The article is organised as follows: After introducing the necessary tools from hyperbolic geometry at the beginning of Section \ref{sec:2} 
we recapitulate the connection with the Willmore energy of surfaces of revolution in Paragraph \ref{subs:revol}. We finish this section with a description of the evolution of geometric quantities under the elastic flow. We devote Section \ref{section:STE} to the well-posedesness of \eqref{eq:stat} and show the long time existence and subconvergence in Section \ref{sec:LTE}. To improve the readability
of the paper but remain self-contained we have decided to collect some technical calculations in the Appendix.
\section{The hyperbolic plane and evolution of geometric quantities}
\label{sec:2}
In this section we compute the evolution equations of several geometric quantities in the hyperbolic plane. Here we choose to work in a general framework at first.

Let $(M^n,g)$ be a  (smooth) $n$-dimensional Riemannian manifold with local coordinates $(y_1, \ldots, y_n)$. By $\mathcal{T}(M)$ we denote the space of vector fields on $M$. As usual, for vector fields $X,Y \in  \mathcal{T}(M)$ we denote by $\nabla_X Y \in  \mathcal{T}(M)$
the unique connection on $M$ that is compatible with the metric and torsion free: the Levi-Civita connection. 
It can be expressed locally with the Christoffel symbols $\Gamma^k_{ij}$ as 
$  \nabla _{\partial_{y_i}} \partial_{y_j} = \sum_{k} \Gamma^k_{ij} \partial_{y_k}$.

If $\gamma \colon I \to M$ is a regular curve on $M$ and $X,Y$ vector fields along $\gamma$ then
\begin{equation}\label{eq:compcurves}
 \frac{\diff}{\diff t} \langle X(t),Y(t)\rangle_{g(\gamma(t))} = 
 \langle \nabla _{\dot \gamma} X(t),Y(t)\rangle_{g(\gamma(t))} +  \langle  X(t),\nabla _{\dot \gamma}Y(t)\rangle_{g(\gamma(t))} ,
\end{equation}
where locally,
 \begin{equation}\label{eq:CovDerLocally}
    \nabla _{\dot \gamma}X = \sum_{k=1}^n \left(\frac{\diff}{\diff t} X^{(k)}(t) + \sum_{i,j} X^{(j)}(t)\dot\gamma^{(i)}(t)\Gamma_{ij}^k(\gamma(t))\right)\partial_{y_k}.
 \end{equation}

Here we choose the following sign convention for the
Riemannian curvature tensor 
\begin{align*}
R & \colon  \mathcal{T}(M)\times \mathcal{T}(M)\times \mathcal{T}(M) \to \mathcal{T}(M),  \quad (X,Y,Z) \mapsto R(X,Y)Z: \\
 & R(X,Y)Z = \nabla_X \nabla_Y Z - \nabla_Y \nabla _X Z - \nabla_{[X,Y]}Z,
\end{align*}
where $[X,Y]$ is the Lie bracket. 
In the case that $(M^n,g)$ has constant sectional curvature $S_0 \in \R$, then by \cite[Chapter 4, Lemma 3.4]{doCarmo} we find that
 \begin{equation}
  \label{eq:RinSpaceForms}
  R(X,Y)Z = S_0 (\langle Y,Z\rangle_g X -\langle X,Z\rangle_g Y).
 \end{equation}

\subsection{The hyperbolic plane}\label{subs:hyp}

The manifold we consider is the hyperbolic half-plane, i.e. the set 
$\Hyp^2 = \{(y_1, y_2) \in \R^2 : y_2  > 0\}$ with global coordinates $(y_1,y_2) \mapsto (y_1, y_2)$ and metric
 \[g_{(y_1,y_2)} = \frac{1}{y_2^2 } \begin{pmatrix}
                                  1&0\\0&1
                                 \end{pmatrix}.
\]
It is well known that $(\Hyp^2,g)$ has constant sectional curvature equal to $-1$. The Christoffel symbols of $(\Hyp^2,g)$ are given by the following expressions
$$\Gamma_{11}^1 = \Gamma_{22}^1 = 0, \quad \Gamma_{12}^1 = \Gamma_{21}^1 = - \frac{1}{y_2},\quad 
\Gamma_{11}^2 = \frac{1}{y_2}, \quad  \Gamma_{22}^2 = -\frac{1}{y_2} \mbox{ and } \Gamma_{12}^2 = \Gamma_{21}^2 = 0.$$
One easily verifies that identifying $\partial_{y_1}$ with $(1,0)^t$ and $\partial_{y_2}$ with $(0,1)^t$ we have the following formula for the covariant derivative in $\Hyp^2$ (see \eqref{eq:CovDerLocally})
\begin{equation}
 \label{eq:NablaLocalH}
 \nabla _{\dot \gamma} X = \begin{pmatrix}
\partial_t X_1 - \frac{1}{\gamma_2} (X_1 \partial_t \gamma_2  + X_2 \partial_t \gamma_1)\\
\partial_t X_2 + \frac{1}{\gamma_2} (X_1 \partial_t \gamma_1  - X_2 \partial_t \gamma_2) \end{pmatrix}.
\end{equation}
Each M\"obius transformation that maps $\Hyp^2$ surjectively to $\Hyp^2$ is an isometry. Examples of such tranforsmations are translations in the $(1,0)^t$-direction and dilatations. The geodesics in $\Hyp^2$ are half-circles or generalised half-circles (that is half-lines) centred at a point $(p,0)^t \in \Hyp^2$. Since there are no closed geodesics, $\E(f) >0$ for any closed curve $f$.

\begin{remark}\label{rem:stern}
The geodesic distance between $(x_1,y_1)$, $(x_2,y_2) \in\Hyp^2$ can be expressed as follows
\[ \operatorname{dist}_{\Hyp^2} ((x_1,y_1),(x_2,y_2))= \arccosh\Big(1+ \frac{(x_2-x_1)^2+(y_2-y_1)^2}{2y_2y_1} \Big)\,.\]
In particular, if $x_1=x_2=x$, then $\operatorname{dist}_{\Hyp^2} ((x,y_1),(x,y_2))= \Big|\log \frac{y_2}{y_1} \Big|$. By these formulas it is immediate to see that a ball in $\Hyp^2$ coincides with an Euclidean ball. Indeed,
$$ B_{r}^{\Hyp^2} (x_0,y_0)=\{ (x,y)^t: \; \mbox{dist}_{\Hyp^2} ((x,y),(x_0,y_0)) < r\}= B_{\cosh(r) y_0}^{\R^2} (x_0,y_0 \cosh(r))\, . $$
\end{remark}

Let $f \colon \mathbb{S}^1 \to \Hyp^2$ be a smooth immersion of a closed curve of length $L$. 
We consider several times the following charts on $ \mathbb{S}^1$:
\begin{align}\nonumber
&\phi_i:I \to (\mathbb{S}^1,\diff s), \mbox{ for }i=1,..,4, \mbox{ and }I=(0,L/2), \mbox{ isometries such that}\\ \label{eq:charts}
&\phi_1 (I) = \mathbb{S}^1 \cap \{ (x,y)^t: x>0 \}=V_1, \quad \phi_2(I) = \mathbb{S}^1 \cap \{ (x,y)^t: y>0 \}=V_2, \\ \nonumber
&\phi_3 (I) = \mathbb{S}^1 \cap \{ (x,y)^t: x<0 \}=V_3, \quad \phi_4(I) = \mathbb{S}^1 \cap \{ (x,y)^t: y<0 \}=V_4 \, .
\end{align}
From \eqref{eq:NablaLocalH} we find that the curvature of $f \colon \mathbb{S}^1 \to \Hyp^2$, $f=(f_1,f_2)^{t}$, where we do not raise the indices, 
is given by
\begin{equation}
 \label{eq:curvature}
 \Kapp = \begin{pmatrix}
\partial_s^2 f_1 - \frac{2}{f_2} \partial_s f_1  \partial_s f_2\\
\partial_s^2 f_2 + \frac{1}{f_2} ( (\partial_s f_1)^2  -(\partial_s f_2)^2) \end{pmatrix}.
\end{equation}

\begin{remark}\label{rem:scaling}
Since dilatations will play a crucial role in the subconvergence result we study here shortly the behaviour of the geometrical quantities with respect to these isometries. Let $f \colon \mathbb{S}^1 \to \Hyp^2$ be a smooth immersion of a closed curve and $\tilde{f}\colon \mathbb{S}^1 \to \Hyp^2$ be its rescaling by a factor $r>0$, that is, $\tilde{f}= r f$. Then 
\[ | \partial_x \tilde{f} |^2_{g(\tilde{f})} = \frac{| \partial_x \tilde{f} |^2_{\text{euc}}}{(\tilde{f}_2)^2} = \frac{ r^2| \partial_x f |^2_{\text{euc}}}{ r^2f_2^2}= | \partial_x f |^2_{g(f)} \, ,\]
so that $\diff s_{\tilde{f}} = \diff s_{f}$, and $\partial_{s_{\tilde{f}} } = \partial_{s_{f}}.$ 
It follows that $\partial_{s_{\tilde{f}} } \tilde{f}= r\partial_{s_{f} } f$ and from \eqref{eq:NablaLocalH}
\begin{equation*}
 \nabla _{\partial_{s_{\tilde{f}} }} \tilde{X} = \begin{pmatrix}
\partial_{s_{\tilde{f}} } \tilde{X}_1 - \frac{1}{\tilde{f}_2} (\tilde{X}_1 \partial_{s_{\tilde{f}} } \tilde{f}_2 + \tilde{X}_2\partial_{s_{\tilde{f}} } \tilde{f}_1)\\
\partial_{s_{\tilde{f}} } \tilde{X}_2 + \frac{1}{\tilde{f}_2} (\tilde{X}_1 \partial_{s_{\tilde{f}} } \tilde{f}_1  - \tilde{X}_2 \partial_{s_{\tilde{f}} } \tilde{f}_2) \end{pmatrix} = r \nabla _{\partial_{s_{f} }} X,
\end{equation*}
for $\tilde{X}= r X$ and $X$ a vector field along $f$. 
We also have  $\Kapp_{\tilde{f}}= r\Kapp_{f}$, 
\[ |\Kapp_{\tilde{f}}|^2_{g(\tilde{f})} = \frac{1}{r^2 f_2^2} |\Kapp_{\tilde{f}}|^2_{\text{euc}} = |\Kapp_{f}|^2_{g(f)} \quad \mbox{ and  } \quad | \nabla _{\partial_{s_{\tilde{f}} }}^m \Kapp_{\tilde{f}}|^2_{g(\tilde{f})} = | \nabla _{\partial_{s_{f} }}^m \Kapp_{f}|^2_{g(f)} \, . \]
In particular 
\[ \E(\tilde{f}) = \frac{1}{2} \int_{\mathbb{S}^1} |\Kapp_{\tilde{f}}|^2 _{g(\tilde{f})} \diff s_{\tilde{f}} = 
\frac{1}{2} \int_{\mathbb{S}^1} |\Kapp_{f}|^2 _{g(f)} \diff s_{f} = \E(f)  \, . \]
\end{remark}

An important result on closed curve in the Euclidean space is the Theorem of Fenchel that says that the total curvature of a closed curve is bounded from below by $2 \pi$. The same result is true also in the hyperbolic plane. 

\begin{mythm}[see \cite{Tsukamoto,Szenthe}]\label{Thm:lengthbelow}
The total absolute curvature of a smooth closed curve in a hyperbolic space is at least $2 \pi$. 
\end{mythm}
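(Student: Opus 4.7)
The plan is to apply the Gauss--Bonnet theorem in the negatively curved plane $\Hyp^2$, treating simple and non-simple closed curves separately.

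For a simple closed regular curve $f$, the Jordan curve theorem applies (since $\Hyp^2$ is homeomorphic to a half-plane), producing a relatively compact topological disk $\Omega \subset \Hyp^2$ with $\partial \Omega = f(\mathbb{S}^1)$ and $\chi(\Omega) = 1$. Using $K_{\Hyp^2} \equiv -1$, the Gauss--Bonnet theorem yields
\begin{equation*}
\int_{\mathbb{S}^1} \kappa \diff s = 2\pi - \int_\Omega K \diff A = 2\pi + \text{Area}(\Omega) > 2\pi,
\end{equation*}
where $\kappa$ is the signed geodesic curvature with respect to the inward normal (so that $|\kappa| = |\Kapp|_g$ on this two-dimensional manifold). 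Hence $\int_{\mathbb{S}^1} |\Kapp|_g \diff s \geq |\int_{\mathbb{S}^1} \kappa \diff s| > 2\pi$, and in fact the inequality is strict due to the negative ambient curvature.

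For a general, possibly self-intersecting closed curve, cancellations in $\int \kappa \diff s$ can occur and the above argument breaks. My approach is to exploit the global parallelization of $\Hyp^2$ by the orthonormal frame $e_i \defeq y_2 \partial_{y_i}$: write $\partial_s f = \cos\theta(s)\,e_1(f) + \sin\theta(s)\,e_2(f)$ for a continuous lift $\theta \colon [0, L] \to \R$. A direct computation from \eqref{eq:NablaLocalH} shows $\kappa = \theta' + \omega(\partial_s f)$, where $\omega$ is the connection 1-form of the frame. Closedness of $f$ forces $\theta(L) - \theta(0) = 2\pi k$ for some $k \in \Z$, and one reduces the inequality to showing $|k| \geq 1$ together with a controlled estimate of the holonomy term $\int_0^L \omega(\partial_s f) \diff s$.

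The main obstacle is precisely this last step, which mirrors the fact that Fenchel's Euclidean theorem for non-simple curves (the tangent indicatrix on $\Sph^{n-1}$ cannot lie in any open hemisphere) is itself non-trivial. In our setting I would adapt Fenchel's strategy via a Busemann-type height function, for example $h(y_1, y_2) \defeq \log y_2$: the critical points of $h \circ f$ on $\mathbb{S}^1$ force $\partial_s f$ to align with the horocycles of $h$ at enough instants to accumulate total turning $\geq 2\pi$, regardless of how many self-intersections occur. This is essentially the integral-geometric approach underlying \cite{Tsukamoto,Szenthe}.
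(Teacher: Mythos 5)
The paper does not prove this theorem at all: it is quoted from the literature (Tsukamoto, Szenthe), so there is no internal argument to compare against. Judged on its own terms, your proposal establishes only the case of embedded curves. The Gauss--Bonnet computation for a simple closed curve is correct: the Jordan--Schoenflies argument gives a disk $\Omega$, and $\int_{\mathbb{S}^1} \kappa \diff s = 2\pi - \int_\Omega K \diff A = 2\pi + \operatorname{Area}(\Omega) > 2\pi$, which indeed dominates from below the total absolute curvature. This part is complete.

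The general case, however, is left as a plan rather than a proof, and the plan contains a step that cannot work as stated. Writing $\kappa = \theta' + \omega(\partial_s f)$ and "reducing to $|k| \geq 1$" fails because closed regular curves of turning number $k = 0$ exist in any simply connected surface (a figure-eight, by the Whitney--Graustein classification); for such curves $\int \kappa \diff s$ carries no useful lower bound and one must estimate $\int |\kappa| \diff s$ directly, exactly as in Fenchel's Euclidean argument where the tangent indicatrix is shown to have length $\geq 2\pi$ because it meets every great circle. Transplanting that argument to $\Hyp^2$ is genuinely delicate: parallel transport has holonomy, so the unit tangents along the curve do not live on a single fixed circle, and the Busemann-function idea you sketch (maxima and minima of $b_\xi \circ f$ for every ideal point $\xi$, combined with a Crofton-type integral-geometric formula in $\Hyp^2$) is precisely the nontrivial content of the cited papers, not a routine verification. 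As written, your argument proves the theorem only for embedded curves; the self-intersecting case remains open in your text, and you acknowledge as much.
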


\subsubsection{Relation with Willmore surfaces of revolution}
\label{subs:revol}
It was already observed by  \cite{LangerSinger1,LangerSinger2} and goes back to 
U. Pinkall and R. Bryant, P. Griffiths  \cite{BrGr} that there is an interesting relation between elastica and Willmore surfaces of revolution that we now shortly review. Let $\gamma: \mathbb{S}^1 \to \mathbb{R}^2_{+}:=\{(x,z)^t: z>0 \}$ be a closed curve parametrised by arc-length. By rotating the curve around the $x$-axis we obtain a surface of revolution in $\R^3$
\[ h_{\gamma}: \mathbb{S}^1 \times [0,2 \pi] \ni (x,\varphi) \mapsto (\gamma_1(x), \gamma_2(x) \cos(\varphi), \gamma_2(x) \sin(\varphi) )^t \in \mathbb{R}^3\, . \]
The induced area element is $\gamma_2(x) \, \diff x \diff \varphi$, the principal curvatures are
\[ \lambda_1= \gamma_1''(x) \gamma_2'(x)- \gamma_2''(x) \gamma_1'(x) \mbox{ and } \lambda_2= \frac{\gamma_1'(x)}{ \gamma_2(x)} \, ,\]
(see \cite[page 161]{doCarmo1}) and the Willmore energy of this surface of revolution is
given by
\[ W(h_{\gamma})= \int H^2 \diff S =  \frac{\pi}{2} \int_{\mathbb{S}^1} \left( \gamma_1''(x) \gamma_2'(x)- \gamma_2''(x) \gamma_1'(x) +\frac{\gamma_1'(x)}{ \gamma_2(x)} \right)^2 \gamma_2(x)\; \diff x \, , \]
where $H$ denotes the mean curvature, that is $H=\frac12 (\lambda_1+\lambda_2)$. 

Now we consider the same curve as a curve $\gamma: \mathbb{S}^1 \to \Hyp^2$. Being this curve parametrised in Euclidean arc-length we find
$\partial_s \gamma (x) = \frac{1}{|\partial_x \gamma(x)|_g} \partial_x \gamma (x) = \gamma_2(x) \gamma' (x)$, $\partial_s =  \gamma_2(x)  \partial_x$  and  for the hyperbolic curvature from \eqref{eq:curvature} with some elementary computations that
$|\Kapp|_{g}^2 =  \gamma_2^2 \Big[( \gamma_1'' \gamma_2' - \gamma_2'' \gamma_1' +\frac{\gamma_1'}{\gamma_2})^2 + 4 \gamma_2'' \frac{1}{\gamma_2} \Big]$.
It follows that 
\begin{align*}
\E(\gamma) & = \int_{\mathbb{S}^1}   |\Kapp|_{g}^2(x)\frac{1}{\gamma_2(x)} \diff x =  \frac{2}{\pi} W(h_{\gamma}) + 4 \int_{\mathbb{S}^1} \gamma_2''(x) \,  \diff x = \frac{2}{\pi} W(h_{\gamma}) \, ,
\end{align*}
since there is no boundary. Since the Willmore energy is invariant under rescaling, it is then not surprising that the same holds for the elastic energy of curves in $\Hyp^2$, see Remark \ref{rem:scaling}.
\\
Note that, even though the energies coincide, the elastic flow does not fully describe the Willmore flow. Indeed, let $f$ be the global solution to \eqref{eq:stat} from \autoref{mainThm}. Rotating the family $f$ around the $x$-axis gives a global family $h_f$ 
of smooth surfaces of revolution that satisfies $
\frac{\diff}{\diff t} W(h_f) = \frac{\diff}{\diff t} \frac{\pi}{2} \mathcal{E}(f) = 
-\|\partial_t f\|_{L^2(\Sph^1;\Hyp^2)}^2$. Thus $h_f$ decreases the Willmore energy of the initial surface of revolution $h_{f_0}$. Moreover, the family has the property that each $h_f$ is a surface of revolution, similar to the evolution under the Willmore flow (see \cite[Section 2]{Blatt}). Nevertheless 
a variation $f + t\psi$ of a fixed curve $f$ only corresponds to rotational invariant variations $h_f + t h_\psi$ of $h_f$, resulting in a gradient 
with respect to a closed subspace of $L^2(\Sigma;\R^3)$ only. 
 
\subsection{Evolution equations}

Let $f \colon \mathbb{S}^1 \to (M,g)$ be a smooth immersion of a closed curve. For convenience we use the following notation
\[\nabla_{\partial_x} = \nabla_{\partial_x f} \mbox{ and } \nabla_{\partial_s} = \nabla_{\partial_s f} \]
where $\partial_s f = \frac{1}{|\partial_xf|_g}\partial_xf \in \mathcal{T}(f)$. For $V \in \mathcal{T}(f)$, $V^{\bot}$ denotes the projection onto the subspace orthogonal to $\partial_s f$. In particular,
\begin{equation}
 \label{eq:defNabla}
 \quad \nabla_{\partial_s}^\bot \cdot = \nabla_{\partial_s f}\cdot - \langle \nabla_{\partial_s f}\cdot, \partial_s f\rangle_g \partial_s f.
\end{equation}
Similarly, if $f \colon \mathbb{S}^1 \times (0,T) \to (M,g)$ for some $T>0$, where we equip $(0,T)$ with the coordinate $t$, we set
\[
 \nabla_{\partial_t} = \nabla_{\partial_t f},\quad \nabla_{\partial_t}^\bot \cdot = \nabla_{\partial_t f}\cdot - \langle \nabla_{\partial_tf}\cdot, \partial_s f\rangle_g \partial_s f.\]

Our aim now is to compute the evolution equations satisfied by derivatives of the curvature of any solution of \eqref{eq:flow}. In order to do that we have also to derive the evolution equations satisfied by other geometric quantities. We give here the results and postpone the quite technical proofs to Appendix \ref{sec:tech}. For completeness we only note here that in the computations we repeatedly use \eqref{eq:compcurves} as follows:
 For two vector fields $X,Y$ along $f \colon \mathbb{S}^1 \times (0,T) \to (M,g)$, $T>0$, we have
\begin{equation}
 \langle X,\nabla_{\partial_t} Y\rangle_g = \partial_t \langle X,Y\rangle_g -   \langle \nabla_{\partial_t}X, Y\rangle_g  \mbox{ and }   \langle X,\nabla_{\partial_s} Y\rangle_g = \partial_s \langle X,Y\rangle_g -   \langle \nabla_{\partial_s}X, Y\rangle_g \, .
\label{eq:PartialIntegration}
\end{equation}

\begin{lemma}\label{lemma:evo}
Let $(M,g)$ be a smooth Riemannian manifold with constant sectional curvature $S_0$. Let $T>0$ and $f \colon \mathbb{S}^1 \times (0,T) \to (M,g)$ be smooth. Let $\partial_t f = V + \phi \partial_s f$ where $\langle V, \partial_s f\rangle = 0$ and whence $\phi = \langle \partial_sf, \partial_t f\rangle$. Then the following evolution formulas are satisfied on $\mathbb{S}^1 \times (0,T)$
\begin{align}
 \nabla_{\partial_t} \partial_x f & = \nabla _{\partial_x}\partial_t f, \label{eq:commutingcoordinates}\\
 \partial_t (|\partial_x f|_g) &= (\partial_s \phi - \langle V,\Kapp\rangle_g )|\partial_xf|_g, \label{eq:evolutionoflineelement1}\\
 \partial_t (\diff s) &=  (\partial_s \phi - \langle V,\Kapp\rangle_g )\diff s, \label{eq:evolutionoflineelement2}\\
 \nabla_{\partial_t}\partial_s f - \nabla_{\partial_s}\partial_t f &= (\langle V,\Kapp\rangle_g -\partial_s \phi)\partial_s f
 \label{eq:commutator1}.
\end{align}
For any  vector field $\Phi \colon I \times [0,T) \to TM$ 
and for any  vector field $N \colon \mathbb{S}^1 \times [0,T) \to TM$ normal to $f$  (i.e. $\langle N, \partial_s f\rangle _g = 0$) we have
\begin{align}
\nabla_{\partial_s} N &= \nabla_{\partial_s}^\bot N - \langle N, \Kapp\rangle _g \partial_s f,\label{eq:NormalDerOfNormal}\\
 \nabla_{\partial_t} \partial_s f &= \nabla_{\partial_s}^\bot V + \phi \Kapp.\label{eq:EvoUnitVelocity}\\
 \nabla_{\partial_t}N &= \nabla_{\partial_t}^\bot N - \langle N, \nabla_{\partial_s}^\bot V + \phi \Kapp\rangle \partial_s f
 \label{eq:EvoNormal}, \\
\nabla_{\partial_t}\nabla_{\partial_x}\Phi - \nabla_{\partial_x}\nabla_{\partial_t}\Phi &= S_0(\langle{\partial_xf,\Phi}\rangle_g V- \langle V, \Phi\rangle_g \partial_xf),\label{eq:EvoSCommutators1}\\
\nabla_{\partial_t}\nabla_{\partial_s}\Phi - \nabla_{\partial_s}\nabla_{\partial_t}\Phi &=- (\partial_s\phi - \langle V,\Kapp\rangle _g)\nabla_{\partial_s} \Phi + S_0(\langle{\partial_sf,\Phi}\rangle_g V- \langle V, \Phi\rangle_g \partial_s f),
\label{eq:commutatorS}\\
\nabla_{\partial_t}^\bot \nabla_{\partial_s}^\bot N - \nabla_{\partial_s}^\bot \nabla_{\partial_t}^\bot N
& =
(\langle V,\Kapp\rangle_g - \partial_s \phi) \nabla_{\partial_s}^\bot N + \langle N,\Kapp\rangle_g \nabla_{\partial_s}^\bot V - \langle N, \nabla_{\partial_s}^\bot V \rangle \Kapp \, .\label{eq:Commutator_Normal_Normal_S}
\end{align}
In particular,
\begin{align}
\nabla_{\partial_t}\Kapp &= (\nabla_{\partial_s}^\bot)^2 V  - 
\langle{\nabla_{\partial_s}^\bot V,\Kapp}\rangle_g \partial_s f + \phi 
\nabla_{\partial_s} \Kapp + \langle V,\Kapp\rangle_g \Kapp +S_0 V,
\label{eq:EvoKappa}\\
\nabla_{\partial_t}^\bot\Kapp &= (\nabla_{\partial_s}^\bot)^2 V   + \phi 
\nabla_{\partial_s} \Kapp + \langle V,\Kapp\rangle_g \Kapp + S_0 V.\label{eq:EvoNormalKappa}
\end{align}
\end{lemma}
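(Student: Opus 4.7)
The plan is to derive these identities in the order listed, since each formula feeds the next. The whole proof relies on only three tools beyond the product rule: (a) torsion-freeness of the Levi-Civita connection, (b) metric compatibility in the form \eqref{eq:compcurves}/\eqref{eq:PartialIntegration}, and (c) the closed-form expression \eqref{eq:RinSpaceForms} for the Riemann tensor in constant sectional curvature.

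I would begin with \eqref{eq:commutingcoordinates}: since $\partial_t$ and $\partial_x$ are coordinate vector fields on $\Sph^1\times(0,T)$, their Lie bracket vanishes, so torsion-freeness of the pullback connection gives $\nabla_{\partial_t}\partial_x f - \nabla_{\partial_x}\partial_t f=0$. Next, I would differentiate $|\partial_x f|_g^2=\langle\partial_x f,\partial_x f\rangle_g$ in $t$, use \eqref{eq:compcurves} together with \eqref{eq:commutingcoordinates}, substitute $\partial_t f=V+\phi\partial_s f$, and exploit $\langle V,\partial_s f\rangle_g=0$ together with $\nabla_{\partial_s}\partial_s f=\Kapp$ to produce \eqref{eq:evolutionoflineelement1}. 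Formula \eqref{eq:evolutionoflineelement2} is then immediate from $\diff s=|\partial_x f|_g\diff x$, and \eqref{eq:commutator1} follows by differentiating the identity $|\partial_x f|_g\,\partial_s f=\partial_x f$ in $t$ and substituting \eqref{eq:commutingcoordinates}, \eqref{eq:evolutionoflineelement1}.

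The decomposition lemmas \eqref{eq:NormalDerOfNormal} and \eqref{eq:EvoNormal} come by differentiating $\langle N,\partial_s f\rangle_g=0$ in $s$ and in $t$ respectively, using the definition \eqref{eq:defNabla} of the normal projection; for the $t$-derivative one also needs \eqref{eq:EvoUnitVelocity}, which itself is obtained by plugging $\partial_t f=V+\phi\partial_s f$ into the right-hand side of \eqref{eq:commutator1} and applying \eqref{eq:NormalDerOfNormal} to $V$. At this point the metric-compatibility block is complete. The curvature identities \eqref{eq:EvoSCommutators1} come from the definition of $R$ combined with the vanishing Lie bracket of $\partial_t,\partial_x$ and the constant-curvature formula \eqref{eq:RinSpaceForms}. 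To pass to \eqref{eq:commutatorS} I rescale $\partial_x=|\partial_x f|_g\partial_s$, apply $\nabla_{\partial_t}$, and absorb the derivative of $|\partial_x f|_g$ via \eqref{eq:evolutionoflineelement1}. For \eqref{eq:Commutator_Normal_Normal_S} I apply \eqref{eq:commutatorS} to a normal field $N$, then take the normal projection on both sides; the tangential pieces removed by the two projections $\nabla_{\partial_t}^\bot\nabla_{\partial_s}^\bot N$ and $\nabla_{\partial_s}^\bot\nabla_{\partial_t}^\bot N$ are computed by \eqref{eq:NormalDerOfNormal} and \eqref{eq:EvoNormal}, and the asymmetric terms $\langle N,\Kapp\rangle_g\nabla_{\partial_s}^\bot V-\langle N,\nabla_{\partial_s}^\bot V\rangle_g \Kapp$ are precisely what survives.

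Finally, \eqref{eq:EvoKappa} is obtained by choosing $\Phi=\partial_s f$ in \eqref{eq:commutatorS}: the right-hand side becomes $\nabla_{\partial_s}\nabla_{\partial_t}\partial_s f - (\partial_s\phi-\langle V,\Kapp\rangle_g)\Kapp + S_0V$ modulo a tangential term, and substituting \eqref{eq:EvoUnitVelocity} for $\nabla_{\partial_t}\partial_s f$ together with \eqref{eq:NormalDerOfNormal} applied to $\nabla_{\partial_s}^\bot V$ yields the stated form. Taking the normal part and cancelling $-\langle\nabla_{\partial_s}^\bot V,\Kapp\rangle_g \partial_s f$ gives \eqref{eq:EvoNormalKappa}. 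The main obstacle is not any single step but the bookkeeping in \eqref{eq:EvoKappa} and \eqref{eq:Commutator_Normal_Normal_S}: one must be careful to distinguish between $\nabla_{\partial_t}$ and $\nabla_{\partial_t}^\bot$ (and likewise in $s$) and to keep track of which tangential contributions come from the decomposition of the connection versus from the curvature tensor; the fact that the mixed terms involving $\phi\Kapp$ that appear in \eqref{eq:EvoUnitVelocity} combine cleanly with those from \eqref{eq:commutatorS} is the content of the identity.
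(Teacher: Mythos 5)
Your proposal is correct and follows essentially the same route as the paper's proof in Appendix A: the same ordering of the identities, the same three ingredients (torsion-freeness for \eqref{eq:commutingcoordinates}, metric compatibility via \eqref{eq:PartialIntegration} for the line-element and normal-projection formulas, and \eqref{eq:RinSpaceForms} for the commutators), and the same derivation of \eqref{eq:Commutator_Normal_Normal_S} and \eqref{eq:EvoKappa} by projecting \eqref{eq:commutatorS} and tracking the tangential corrections through \eqref{eq:NormalDerOfNormal} and \eqref{eq:EvoUnitVelocity}. The bookkeeping you identify as the main obstacle is indeed where the paper spends most of its effort, and your description of which asymmetric terms survive in \eqref{eq:Commutator_Normal_Normal_S} is accurate.
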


 The proof is given in Appendix \ref{sec:tech}. The same formulas in the case $M=\mathbb{R}^n$ with the standard metric has been computed in \cite[Lemma 2.1]{DKS}.

\begin{remark}\label{rmk:proofGrad}
 With the formulas just derived we can verify that $\nabla_{L^2}\E_\lambda(f)$ is given as in \eqref{eq:gradientElambda} if $M$ has constant sectional curvature $S_0$. Let $T>0$ and $f \colon \mathbb{S}^1 \times (0,T) \to (M^n,g)$ be smooth. We write $\partial_t f = V + \phi \partial_s f$, where $\langle V, \partial_s f\rangle = 0$. Then we find using \eqref{eq:EvoNormalKappa}, \eqref{eq:evolutionoflineelement2} and direct computation that
 \begin{align*} 
  \frac{\diff}{\diff t} \E_{\lambda}(f) 
   &= \int_{\mathbb{S}^1} \langle \Kapp ,   (\nabla_{\partial_s}^\bot)^2 V   + \phi \nabla_{\partial_s} \Kapp 
   + \langle V,\Kapp\rangle_g \Kapp + S_0 V \rangle _g \diff s \\ 
	& \qquad + \int_{\mathbb{S}^1} (\frac{1}{2}|\Kapp|_g^2 + \lambda )(\partial_s \phi-\langle V,\Kapp \rangle _g )\diff s \, .
	\end{align*}
	Integrating by parts
  \allowdisplaybreaks{\begin{align*}
	\frac{\diff}{\diff t} \E_{\lambda}(f)  
   &= 
    \int_{\mathbb{S}^1} \langle 
   (\nabla_{\partial_s}^\bot)^2 \Kapp + \frac{1}{2} |\Kapp|^2_g \Kapp + S_0\Kapp- \lambda \Kapp
   ,V\rangle _g \diff s  = \langle \nabla_{L^2}\E_\lambda(f), \partial_t f \rangle_{L^2} \, ,
 \end{align*}}
and hence \eqref{eq:gradientElambda} follows. From this computation we see in particular that if we consider the steepest descent flow $\partial_t f = - \nabla_{L^2}\E_\lambda(f)$, that is $V=- \nabla_{L^2}\E_\lambda(f) $ and $\phi=0$, then 
\begin{equation}\label{eq:endecrease}
\frac{\diff}{\diff t} \E_{\lambda}(f) = - \int_{\mathbb{S}^1} | \nabla_{L^2}\E_\lambda(f)|_g^2 \diff s  \leq 0 \,.
\end{equation}
\end{remark}

In order to give the evolution equations satisfied by the derivatives of the curvature we need to introduce first some notation. Similar to \cite[Lem.2.3]{DKS} and \cite[Sec.3]{AnnaPaola1} we denote by the product $ N _1 *  N _2 * \cdots * N _k$ of normal vector fields $ N _1, \ldots ,  N _k$ the function $\langle  N _1,  N _2\rangle_g \cdots \langle  N _{k-1},  N _k\rangle_g$ if $k$ is even, and the vector field
$\langle  N _1,  N _2\rangle_g \cdots \langle  N _{k-2},  N _{k-1}\rangle_g  N _k$ if $k$ is odd. Furthermore we denote the pointwise product by functions again by $*$, if some of the $N_i$ are functions. Moreover, we denote by $P_b^{a,c}( N )$ any linear combination of terms of the type
\[
 (\nabla_{\partial_s}^\bot)^{i_1}  N  * \cdots * (\nabla_{\partial_s}^\bot)^{i_b}  N  \text{ with }i_1 + \ldots + i_b = a \text{ and }\max i_j \leq c
\]
with universal, constant coefficients. Usually we have $N = \Kapp$. Notice that $a$ gives the total number of derivatives, $b$ denotes the number of factors and $c$ gives a bound on the highest number of derivatives falling on one factor. We observe the two properties 
\[P_b^{a,c}( N ) * P_\beta^{\alpha,\gamma}( N ) = P_{b+\beta}^{a+\alpha, \max\{c,\gamma\}}( N ) \text{ and } \nabla_{\partial_s}^\bot P_b^{a,c}( N ) = P_b^{a+1,c+1}( N ),\]
where we abuse the notation $\nabla_{\partial_s}^\bot P^{a,c}_b (N)$ for $\partial_s P^{a,c}_b (N)$ if $b$ is even. 
Adopting this notation we find from \autoref{eq:flow} that
\begin{equation}
\label{eq:GradAsPolynom} 
 - \nabla_{L^2}\E_\lambda(f) = -(\nabla_{\partial_s}^\bot )^2 \Kapp+ P_3^{0,0}(\Kapp)  + P_1^{0,0}(\Kapp),
\end{equation}
where we do not keep track of the constants $\lambda$ and $S_0$ since they are fixed. 
Finally we derive the evolution equations satisfied by the derivative of the curvature. 

\begin{lemma}\label{lem:dercurvature}
 Under the assumption of \autoref{lemma:evo} 
we have for any $m \in \N_0$
 \begin{align*}
  \nabla_{\partial_t}^\bot (\nabla_{\partial_s}^\bot)^m\Kapp 
  &= -(\nabla_{\partial_s}^\bot)^{4+m} \Kapp + P_3^{2+m, 2+m}(\Kapp) + 
	P_1^{2+m,2+m} (\Kapp) 
	\\ &\quad  + P_5^{m,m}(\Kapp) + 
	P_3^{m,m} (\Kapp) + 
	P_1 ^{m,m} (\Kapp) 
	.
 \end{align*}
\end{lemma}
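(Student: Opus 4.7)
I proceed by induction on $m \in \N_0$. Throughout I use that along the flow \eqref{eq:flow} one has $\phi = \langle \partial_s f, \partial_t f\rangle = 0$ and $V = -\nabla_{L^2}\E_\lambda(f)$, so by \eqref{eq:GradAsPolynom}
\[ V = -(\nabla_{\partial_s}^\bot)^2 \Kapp + P_3^{0,0}(\Kapp) + P_1^{0,0}(\Kapp). \]
For the base case $m=0$ I substitute into \eqref{eq:EvoNormalKappa}. Applying the shift identity $\nabla_{\partial_s}^\bot P_b^{a,c}(\Kapp) = P_b^{a+1,c+1}(\Kapp)$ twice yields
\[ (\nabla_{\partial_s}^\bot)^2 V = -(\nabla_{\partial_s}^\bot)^4 \Kapp + P_3^{2,2}(\Kapp) + P_1^{2,2}(\Kapp), \]
while the remaining contributions $\langle V, \Kapp\rangle_g \Kapp$ and $S_0 V$ are regrouped via the product rule $P_b^{a,c} * P_\beta^{\alpha,\gamma} = P_{b+\beta}^{a+\alpha,\max\{c,\gamma\}}$ into the five $P$-polynomials listed in the statement.

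For the inductive step I apply the normal commutator \eqref{eq:Commutator_Normal_Normal_S} with $N = (\nabla_{\partial_s}^\bot)^m \Kapp$ and $\phi = 0$, which gives
\[ \nabla_{\partial_t}^\bot (\nabla_{\partial_s}^\bot)^{m+1} \Kapp = \nabla_{\partial_s}^\bot \bigl(\nabla_{\partial_t}^\bot (\nabla_{\partial_s}^\bot)^m \Kapp\bigr) + \mathcal{R}, \]
where
\[ \mathcal{R} = \langle V,\Kapp\rangle_g (\nabla_{\partial_s}^\bot)^{m+1}\Kapp + \langle (\nabla_{\partial_s}^\bot)^m\Kapp,\Kapp\rangle_g\, \nabla_{\partial_s}^\bot V - \langle (\nabla_{\partial_s}^\bot)^m\Kapp, \nabla_{\partial_s}^\bot V\rangle\, \Kapp. \]
Applying $\nabla_{\partial_s}^\bot$ to the inductive hypothesis turns $-(\nabla_{\partial_s}^\bot)^{4+m}\Kapp$ into $-(\nabla_{\partial_s}^\bot)^{5+m}\Kapp$ and raises every $a$- and $c$-index of the $P$-polynomials by $1$, producing exactly the shape claimed on the right-hand side for $m+1$. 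For $\mathcal{R}$ I insert $V$ and $\nabla_{\partial_s}^\bot V = -(\nabla_{\partial_s}^\bot)^3\Kapp + P_3^{1,1}(\Kapp) + P_1^{1,1}(\Kapp)$, then collapse via the product rule; each summand falls into one of $P_3^{3+m,3+m}(\Kapp)$, $P_1^{3+m,3+m}(\Kapp)$, $P_5^{m+1,m+1}(\Kapp)$, $P_3^{m+1,m+1}(\Kapp)$, or $P_1^{m+1,m+1}(\Kapp)$.

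The principal (if unglamorous) obstacle is bookkeeping with the $P$-notation, in particular verifying that the maximal number of derivatives falling on a single factor in every cross-term does not exceed the $c$-index claimed on the right-hand side. This reduces to the elementary inequalities $\max\{m,2\} \leq m+2$ and $\max\{m,3\} \leq m+3$, valid for all $m \in \N_0$, which guarantee that each product arising from $\mathcal{R}$ fits into the listed polynomials. The universal constants coming from $S_0 = -1$, from $\lambda$, and from the commutator are harmless because the $P_b^{a,c}$-notation only records linear combinations with universal coefficients.
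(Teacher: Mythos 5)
Your proposal is correct and follows essentially the same route as the paper: the base case by inserting $V=-(\nabla_{\partial_s}^\bot)^2\Kapp+P_3^{0,0}(\Kapp)+P_1^{0,0}(\Kapp)$ into \eqref{eq:EvoNormalKappa}, and the inductive step via the commutator \eqref{eq:Commutator_Normal_Normal_S} applied to $N=(\nabla_{\partial_s}^\bot)^m\Kapp$ with $\phi=0$, followed by the same $P^{a,c}_b$-bookkeeping. The paper's intermediate terms $P_3^{m+3,\max\{m+1,2\}}(\Kapp)$ and $P_3^{m+3,\max\{m,3\}}(\Kapp)$ are absorbed exactly by the inequalities you cite, so nothing is missing.
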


The proof is given in Appendix \ref{sec:tech} and the result in the case $M=\mathbb{R}^n$ with the standard metric has been given in \cite[Lemma 2.3]{DKS}. We have just derived the evolution equations of the normal component of the derivatives of the curvature. In order to get control of the flow we need information on the whole derivative. For this reason in the next lemma we look at the relation between $\nabla_{\partial_s}^m \Kapp$ and $(\nabla_{\partial_s }^\bot)^m\Kapp$ for $m \in \N$.

\begin{lemma}\label{lem:dercurvnormnicht}
 Under the assumption of \autoref{lemma:evo} then
 $\nabla_{\partial_s}\Kapp = \nabla_{\partial_s }^\bot\Kapp - |\Kapp|^2 \partial_s f$, 
 and for $m \geq 2:$
 \[
  \nabla_{\partial_s}^m \Kapp = (\nabla_{\partial_s }^\bot)^m\Kapp 
  + \sum_{b=2,\; b\text{ even}}^{m+1} P_b^{m+1 -b, m+1-b}(\Kapp) \partial_sf 
  +  \sum_{b=3,\; b\text{ odd}}^{m+1} P_b^{m+1 -b, m+1-b}(\Kapp).
 \]
\end{lemma}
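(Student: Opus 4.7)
The plan is to argue by induction on $m$, using \eqref{eq:NormalDerOfNormal} as the basic reduction that converts a full $\nabla_{\partial_s}$ on a normal field into its orthogonal part plus a tangential correction. For the base case $m=1$, apply \eqref{eq:NormalDerOfNormal} directly to $N=\Kapp$; this yields $\nabla_{\partial_s}\Kapp = \nabla_{\partial_s}^\bot \Kapp - \langle\Kapp,\Kapp\rangle_g \partial_s f = \nabla_{\partial_s}^\bot \Kapp - |\Kapp|^2\partial_s f$. The $m=2$ case is also worth writing out explicitly as a sanity check, since one then sees $\langle\nabla_{\partial_s}^\bot\Kapp,\Kapp\rangle_g\partial_s f \in P_2^{1,1}(\Kapp)\partial_s f$ and $|\Kapp|^2\Kapp \in P_3^{0,0}(\Kapp)$, matching the claimed sums for $m+1=2$.

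For the induction step from $m$ to $m+1$, I apply $\nabla_{\partial_s}$ to every term in the formula for $\nabla_{\partial_s}^m\Kapp$. The principal piece $(\nabla_{\partial_s}^\bot)^m\Kapp$ is normal, so \eqref{eq:NormalDerOfNormal} gives $(\nabla_{\partial_s}^\bot)^{m+1}\Kapp - \langle(\nabla_{\partial_s}^\bot)^m\Kapp,\Kapp\rangle_g\partial_s f$, and the correction lies in $P_2^{m,m}(\Kapp)\partial_s f$, which is the $b=2$ summand at level $m+1$. For a typical remainder term of even type $P_b^{m+1-b,m+1-b}(\Kapp)\partial_s f$, the Leibniz rule together with $\nabla_{\partial_s}\partial_s f=\Kapp$ produces $\bigl(\partial_s P_b^{m+1-b,m+1-b}(\Kapp)\bigr)\partial_s f + P_b^{m+1-b,m+1-b}(\Kapp)\,\Kapp$. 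The first summand has form $P_b^{m+2-b,m+2-b}(\Kapp)\partial_s f$ (same $b$, one more derivative), and the second fuses into $P_{b+1}^{(m+1)-b,(m+1)-b}(\Kapp) = P_{b+1}^{m+2-(b+1),m+2-(b+1)}(\Kapp)$, which is an odd-$b'$ summand at level $m+1$. Analogously, applying $\nabla_{\partial_s}$ to an odd term $P_b^{m+1-b,m+1-b}(\Kapp)$ yields $P_b^{m+2-b,m+2-b}(\Kapp)$ (after using \eqref{eq:NormalDerOfNormal} once more on the lone vector factor) plus a tangential piece of the shape $P_{b+1}^{m+1-b,m+1-b}(\Kapp)\partial_s f = P_{b+1}^{m+2-(b+1),m+2-(b+1)}(\Kapp)\partial_s f$.

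To make the scalar-part computation precise, I will use metric compatibility \eqref{eq:PartialIntegration} in the form $\partial_s\langle X,Y\rangle_g = \langle\nabla_{\partial_s}X,Y\rangle_g + \langle X,\nabla_{\partial_s}Y\rangle_g$ on each inner product $\langle(\nabla_{\partial_s}^\bot)^{i_j}\Kapp,(\nabla_{\partial_s}^\bot)^{i_{j+1}}\Kapp\rangle_g$; because both entries are normal, $\nabla_{\partial_s}$ can be replaced by $\nabla_{\partial_s}^\bot$ in these inner products, so each differentiated factor stays in the $P$-calculus with $a$ raised by one and $c$ raised by at most one. Thus the scalar-part differentiation sends $P_b^{a,c}$ to $P_b^{a+1,c+1}$, which is exactly the rule the authors already recorded for $b$ even.

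The step is conceptually routine; the main obstacle is purely bookkeeping, namely verifying that after every application of $\nabla_{\partial_s}$ the pair of indices $(a,b)$ shifts in such a way that the exponent identity $a=m+2-b$ (at level $m+1$) is preserved and that the constraint $\max i_j \leq c$ does not blow up beyond $m+2-b$. A careful match of the four cases (differentiating the scalar part vs.\ the free vector factor, inside even vs.\ odd $b$) confirms that all new contributions fall into the correct summand of the next level, so the induction closes and the lemma follows.
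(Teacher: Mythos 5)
Your proposal is correct and follows essentially the same route as the paper: the base case via \eqref{eq:NormalDerOfNormal} applied to $N=\Kapp$, an explicit check at $m=2$, and an induction step that differentiates each summand using the rules $\nabla_{\partial_s}P_b^{a,c}=P_b^{a+1,c+1}$ for $b$ even and $\nabla_{\partial_s}P_b^{a,c}=P_b^{a+1,c+1}+P_{b+1}^{a,c}\partial_sf$ for $b$ odd, together with $\nabla_{\partial_s}\partial_sf=\Kapp$. The index bookkeeping you outline (each new term landing in $P_{b'}^{m+2-b',m+2-b'}$ at level $m+1$) is exactly what the paper verifies.
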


Also in this case the proof is given in Appendix \ref{sec:tech} and the result in $\R^n$ is given in \cite[Lemma 2.6]{DKS}. 

\begin{lemma}\label{lem:controlparsuff}
Under the assumption of \autoref{lemma:evo} for any  vector field $N \colon \mathbb{S}^1 \times (0,T) \to TM$ normal to $f$  (i.e. $\langle N, \partial_s f\rangle _g = 0$) we have for any $m \in \N$ 
$$ \nabla_{\partial_x}^m N = \gamma^m   \nabla_{\partial_s}^m N + \sum_{j=1}^{m-1} P_{m,j}(\gamma, ....,\partial_x^{m-j} \gamma ) \nabla_{\partial_s}^j N\, , $$
with $\gamma = |\partial_x f|_g$ and $P_{m,j}$ polynomials of degree at most $m-1$.
\end{lemma}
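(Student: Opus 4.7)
The plan is to prove this by induction on $m \in \N$, exploiting the simple identity
\[
\nabla_{\partial_x} = \gamma \, \nabla_{\partial_s},
\]
which follows from $\partial_x f = \gamma \, \partial_s f$ together with the tensoriality of the Levi-Civita connection in its direction argument ($\nabla_{fX}Y = f\nabla_X Y$). This already settles the base case $m=1$: indeed, applied to $N$ it gives $\nabla_{\partial_x}N = \gamma \nabla_{\partial_s} N$, and the sum $\sum_{j=1}^{m-1}$ is empty when $m=1$.

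For the induction step, assume the formula holds at order $m$. Then
\[
\nabla_{\partial_x}^{m+1} N = \nabla_{\partial_x}\Bigl(\gamma^m \nabla_{\partial_s}^m N\Bigr) + \sum_{j=1}^{m-1}\nabla_{\partial_x}\Bigl(P_{m,j} \, \nabla_{\partial_s}^j N\Bigr).
\]
Each term is expanded with the product rule, remembering that $\nabla_{\partial_x}$ differentiates a scalar coefficient as $\partial_x$ and acts on a covariant derivative of $N$ by pulling out an extra factor $\gamma$ (by the key identity applied once more). This produces the leading term $\gamma^{m+1}\nabla_{\partial_s}^{m+1}N$ from $\gamma^m \cdot \gamma \, \nabla_{\partial_s}^{m+1}N$, plus further contributions of the form $(\partial_x\gamma^m)\nabla_{\partial_s}^m N$, $\gamma P_{m,j}\nabla_{\partial_s}^{j+1}N$, and $(\partial_x P_{m,j})\nabla_{\partial_s}^j N$. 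Collecting by order of the normal derivative $\nabla_{\partial_s}^\ell N$, $\ell = 1, \dots, m$, defines the new polynomials $P_{m+1,\ell}$ in the variables $\gamma, \partial_x \gamma, \dots, \partial_x^{(m+1)-\ell}\gamma$.

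The remaining point is the degree bound, and here the main bookkeeping observation is: $\partial_x$ applied to a polynomial in $\gamma, \partial_x \gamma, \dots$ preserves polynomial degree (it only raises the order of derivatives present), whereas multiplication by $\gamma$ raises the degree by exactly one. Since $P_{m,j}$ has degree at most $m-1$ by induction, the term $\gamma P_{m,j}$ has degree at most $m$, the term $\partial_x P_{m,j}$ has degree at most $m-1 \leq m$, and $\partial_x \gamma^m = m\gamma^{m-1}\partial_x \gamma$ has degree $m$. Hence every $P_{m+1,\ell}$ has degree at most $m = (m+1)-1$, closing the induction.

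I expect no substantial obstacle: the proof is essentially a tracking exercise. The only mildly delicate point is being careful about what "polynomial degree" is meant — treating $\gamma,\partial_x \gamma,\partial_x^2 \gamma,\dots$ as independent indeterminates of degree one each — but once this convention is fixed, the two observations that $\partial_x$ preserves degree while multiplication by $\gamma$ increases it by one, combined with the identity $\nabla_{\partial_x} = \gamma\nabla_{\partial_s}$, make the inductive step essentially automatic.
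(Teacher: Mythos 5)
Your proof is correct and follows essentially the same route as the paper: induction on $m$ using the identity $\nabla_{\partial_x}=\gamma\nabla_{\partial_s}$, the product rule, and degree bookkeeping for the polynomials $P_{m,j}$. No issues.
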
 

The following lemma gives the main tool to derive from the evolution equations in $M$ of the curvature and its derivative to a differential equation for their $L^2$-norms. 

\begin{lemma}[c.f. {\cite[Lemma 2.2]{DKS}}]\label{lem:integration}
 Let $f \colon \mathbb{S}^1 \times (0,T) \to (M^n,g)$ be a family of curves such that  $\partial_t f = V$, where $V$ is a vector field normal to $f$. Then for any smooth normal vector field $N$ along $f$ satisfying
 \begin{equation}
 \label{eq:DefY}
 \nabla_{\partial_t}^\bot N + (\nabla_{\partial_s}^\bot)^4 N = Y,
 \end{equation}
we find
 \begin{equation}
  \frac{\diff}{\diff t} \frac{1}{2} \int_{\Sph^1} |N|^2_g \diff s + \int_{\Sph^1} |(\nabla_{\partial_s} ^\bot)^2 N|_g^2 \diff s
  = \int_{\Sph^1} \langle Y,N\rangle _g \diff s - \frac{1}{2} \int_{\Sph^1} \langle V, \Kapp\rangle _g |N|_g^2 \diff s.
 \end{equation}
\end{lemma}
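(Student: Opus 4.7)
The plan is to differentiate the $L^2$ norm of $N$ under the integral, splitting into the contribution from $\partial_t|N|^2_g$ and the contribution from $\partial_t(\diff s)$. Since $\partial_t f = V$ is normal, i.e.\ $\phi = 0$ in the notation of \autoref{lemma:evo}, formula \eqref{eq:evolutionoflineelement2} gives $\partial_t(\diff s) = -\langle V,\Kapp\rangle_g \diff s$, which already produces the last term on the right-hand side of the claim.

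For the other piece, I would use \eqref{eq:compcurves} to write $\partial_t \langle N,N\rangle_g = 2\langle \nabla_{\partial_t} N, N\rangle_g$, and then observe that because $N$ is normal to $\partial_sf$, the decomposition $\nabla_{\partial_t} N = \nabla_{\partial_t}^\bot N + \langle \nabla_{\partial_t}N, \partial_s f\rangle_g\,\partial_s f$ gives $\langle \nabla_{\partial_t}N, N\rangle_g = \langle \nabla_{\partial_t}^\bot N, N\rangle_g$. Plugging in \eqref{eq:DefY} this becomes $\langle Y, N\rangle_g - \langle (\nabla_{\partial_s}^\bot)^4 N, N\rangle_g$, so that after integration we are left with the task of showing
\[
\int_{\Sph^1} \langle (\nabla_{\partial_s}^\bot)^4 N, N\rangle_g \diff s = \int_{\Sph^1} |(\nabla_{\partial_s}^\bot)^2 N|_g^2\,\diff s.
\]

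The key observation here is that on the closed curve $\Sph^1$, the operator $\nabla_{\partial_s}^\bot$ is skew-adjoint when paired between two normal vector fields. Indeed, for normal fields $X,Y$, the tangent part of $\nabla_{\partial_s} Y$ contributes nothing in $\langle X, \nabla_{\partial_s} Y\rangle_g$, so this equals $\langle X, \nabla_{\partial_s}^\bot Y\rangle_g$; applying \eqref{eq:PartialIntegration} and the same remark again yields
\[
\int_{\Sph^1} \langle X, \nabla_{\partial_s}^\bot Y\rangle_g \diff s = \int_{\Sph^1} \partial_s \langle X,Y\rangle_g \diff s - \int_{\Sph^1} \langle \nabla_{\partial_s}^\bot X, Y\rangle_g \diff s = -\int_{\Sph^1} \langle \nabla_{\partial_s}^\bot X, Y\rangle_g \diff s,
\]
since the total $s$-derivative integrates to zero on the closed curve $\Sph^1$. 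Applying this identity twice with $X = (\nabla_{\partial_s}^\bot)^k N$ and $Y = (\nabla_{\partial_s}^\bot)^{3-k} N$ reduces $\int \langle (\nabla_{\partial_s}^\bot)^4 N, N\rangle_g \diff s$ to $\int |(\nabla_{\partial_s}^\bot)^2 N|_g^2 \diff s$.

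Combining the three contributions and rearranging gives the stated formula. I do not expect any substantial obstacle: once the normality of $N$ has been exploited to replace full covariant derivatives by their normal projections in the crucial inner products, the argument is a direct integration by parts, closed-curve boundary term vanishing, and bookkeeping of the $\partial_t(\diff s)$ contribution via \eqref{eq:evolutionoflineelement2}. The only subtlety to be careful about is the appeal to $\phi = 0$ which is exactly the hypothesis $\partial_t f = V$ with $V$ normal.
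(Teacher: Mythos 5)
Your proposal is correct and follows essentially the same route as the paper's proof: exploit the normality of $N$ to replace $\nabla_{\partial_t}N$ by $\nabla_{\partial_t}^\bot N$ in the pairing with $N$, use \eqref{eq:evolutionoflineelement2} with $\phi=0$ for the evolution of $\diff s$, and integrate by parts twice on the closed curve to convert $\int\langle(\nabla_{\partial_s}^\bot)^4N,N\rangle_g\,\diff s$ into $\int|(\nabla_{\partial_s}^\bot)^2N|_g^2\,\diff s$. The only cosmetic difference is that you start from $\frac{\diff}{\diff t}\frac12\int|N|^2_g\,\diff s$ and substitute the equation, whereas the paper pairs \eqref{eq:DefY} with $N$ first; the computations are identical.
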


\begin{proof}
 Taking the scalar product of \eqref{eq:DefY} with $N$ and integrating we obtain
 \[
   \int_{\Sph^1} \langle N, \nabla_{\partial_t}^\bot N\rangle _g \diff s + \int_{\Sph^1} \langle N, (\nabla_{\partial_s}^\bot)^4N \rangle \diff s  =\int_{\Sph^1} \langle N,  Y\rangle_g \diff s.
 \]
 Since $N$ is normal, using \eqref{eq:PartialIntegration} and \eqref{eq:evolutionoflineelement2} (with $\phi=0$) we get
  \begin{equation}
     2\int_{\Sph^1} \langle N, \nabla_{\partial_t}^\bot N\rangle _g \diff s_f 
  =  \int_{\Sph^1}(\partial_t|N|_g^2) \diff s_f
  =  \frac{\diff}{\diff t}  \int_{\Sph^1}|N|_g^2 \diff s_f + \int_{\Sph^1} \langle V, \Kapp\rangle _g |N|_g^2 \diff s_f. \label{eq:Lemma2.2_1}
  \end{equation}
Similarly, using again  \eqref{eq:PartialIntegration}  and that $\mathbb{S}^1$ has no boundary we find
\begin{align}
  \int_{\Sph^1} \langle N, (\nabla_{\partial_s}^\bot)^4 N \rangle_g \diff s_f 
  &= -  \int_{\Sph^1} \langle \nabla_{\partial_s} N,  (\nabla_{\partial_s}^\bot)^3 N\rangle_g \diff s_f\nonumber 
  =  \int_{\Sph^1} \langle (\nabla_{\partial_s}^\bot)^2 N,  (\nabla_{\partial_s}^\bot)^2 N \rangle_g \diff s_f \nonumber\\
  &=  \int_{\Sph^1} |(\nabla_{\partial_s} ^\bot)^2 N|_g^2 \diff s_f ,
  \label{eq:Lemma2.2_2}
\end{align}
and the claim follows from \eqref{eq:Lemma2.2_1} and \eqref{eq:Lemma2.2_2}.
\end{proof}

\section{Short time existence}
\label{section:STE}

In this section we give a complete proof to the short time existence of the elastic flow in the hyperbolic plane. Thus we have $S_0 = -1$ in the following.

\begin{mythm}\label{thm:STE}
Let $f_0\colon \mathbb{S}^1 \to \mathbb{H}^2$ be an immersion. 
\begin{enumerate}[(i)]
 \item If $f_0$ is smooth there exists some $T > 0$ and a smooth immersed solution $f\colon \mathbb{S}^1 \times [0,T] \to \mathbb{H}^2$ to the elastic flow
 \begin{equation}\label{eq:EvolEqn}
   \begin{cases}
           \partial_t f = -(\nabla_{\partial_s}^\bot)^2 \Kapp - \frac{1}{2} |\Kapp|^2_g \Kapp +(1+ \lambda) \Kapp, & \text{ in } \mathbb{S}^1 \times [0,T],\\
           f(0,x)  = f_0(x), & \text{ on } \mathbb{S}^1.
          \end{cases}
 \end{equation}
 The solution is unique up to reparametrisations.
 \item If $f_0 \in \CalC^{5,\alpha}$, then there exists a solution $f$ to \eqref{eq:EvolEqn} such that $f$ and $\partial_t f$ lie in the parabolic H\"older space $\HS[1+\alpha]{\frac{1+\alpha}{4}}(\Sph^1 \times [0,T];\mathbb{H}^2)$. The solution is smooth on $(0,T]\times \Sph ^1$. 
\end{enumerate}
\end{mythm}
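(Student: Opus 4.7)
My plan is to reduce the reparametrization-invariant equation \eqref{eq:EvolEqn}, which is only degenerate parabolic (the degeneracy lying in the direction $\partial_s f$), to a scalar, strictly parabolic quasilinear fourth-order equation by writing $f$ as a normal graph over $f_0$. Using the ambient Euclidean coordinates on $\Hyp^2\subset\R^2$ and a fixed smooth unit normal $\nu_0$ along $f_0$, I would make the ansatz
\[ f(t,x) = f_0(x) + \phi(t,x)\,\nu_0(x). \]
For $\phi$ small enough in $\CalC^1$ the image stays inside $\Hyp^2$ and $f(t,\cdot)$ is a regular immersion. Inserting this into \eqref{eq:EvolEqn}, using \eqref{eq:curvature} and \eqref{eq:NablaLocalH}, and projecting onto $\nu_0$ (the tangential part being absorbed by reparametrization invariance), yields a scalar equation
\[ \partial_t\phi = -a(x,\phi,\partial_x\phi)\,\partial_x^4\phi + F\bigl(x,\phi,\partial_x\phi,\partial_x^2\phi,\partial_x^3\phi\bigr), \]
with $a(x,0,0) = |\partial_x f_0|_{g(f_0)}^{-4}\,\langle \nu_0,\nu_0\rangle_{g(f_0)}^2 >0$ and $F$ smooth in all of its arguments; hence the equation is strictly parabolic in a $\CalC^1$-neighbourhood of $\phi\equiv 0$.

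Once this scalar reduction is in place, short-time existence follows from standard quasilinear parabolic theory on $\Sph^1$ in parabolic H\"older spaces (Solonnikov, Lunardi, Amann). For $f_0\in\CalC^{5,\alpha}$ this produces a solution $\phi\in\HS[5+\alpha]{\frac{5+\alpha}{4}}(\Sph^1\times[0,T])$; undoing the substitution gives the parabolic regularity claimed for $f$ and $\partial_tf$ in part (ii). For smooth $f_0$, higher regularity is propagated by iteratively differentiating the equation and applying Schauder estimates to the resulting linear systems. Parabolic smoothing in turn yields smoothness of $\phi$, and hence of $f$, on $\Sph^1\times(0,T]$ in the $\CalC^{5,\alpha}$ case, establishing the last assertion of part (ii). Uniqueness up to reparametrization in (i) is obtained in the standard way: given two smooth solutions $f_1, f_2$ with the same initial data, one constructs a family of diffeomorphisms $\varphi_t\colon\Sph^1\to\Sph^1$ with $\varphi_0=\mathrm{id}$ by solving a tangential ODE so that $f_2(t,\varphi_t(\cdot))$ is a normal graph over $f_1(t,\cdot)$; uniqueness for the scalar graph equation then forces $f_1(t,\cdot) = f_2(t,\varphi_t(\cdot))$.

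The principal obstacle is not the abstract application of the parabolic existence theorem but the explicit verification of strict parabolicity and of the smoothness of the coefficients in the graph representation. The hyperbolic metric depends nonlinearly on $f_2 = (f_0)_2 + \phi\,(\nu_0)_2$, so computing the principal symbol $a$ and the lower-order right-hand side $F$ requires a careful but direct expansion based on \eqref{eq:curvature}, \eqref{eq:NablaLocalH}, and \eqref{eq:GradAsPolynom}. Positivity of $a$ is inherited from the conformally Euclidean structure of $g$ as long as $f_2>0$, which persists for short times by continuity; once strict parabolicity is checked, the remaining steps follow standard templates for quasilinear fourth-order parabolic problems.
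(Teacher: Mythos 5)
Your overall strategy -- normal graph ansatz, reduction to a scalar quasilinear fourth--order parabolic equation, Schauder theory plus bootstrapping, and uniqueness via a tangential reparametrisation ODE -- is exactly the route the paper takes. There is, however, one genuine gap: you write the graph over $f_0$ itself, $f = f_0 + \phi\,\nu_0$, whereas the paper performs a Hanzawa-type transformation and writes $f = \overline f + u\,\overline N$ over a nearby \emph{smooth} reference curve $\overline f$ (Proposition \ref{prop:NormalGraph}). For part (i), where $f_0$ is smooth, the two choices are interchangeable. For part (ii) they are not: with $f_0 \in \CalC^{5,\alpha}$ only, the coefficient $a(x,\cdot)$ and the lower-order term $F(x,\cdot)$ of your scalar equation contain $\nu_0$ and derivatives of $f_0$ up to order four, so they have only finite H\"older regularity in $x$. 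Parabolic smoothing can never make the solution smoother in the spatial variable than the coefficients of the equation, so your $\phi$ -- and hence $f = f_0 + \phi\,\nu_0$, which in addition carries the raw regularity of $f_0$ and $\nu_0$ -- cannot be shown to be $\CalC^\infty$ on $\Sph^1 \times (0,T]$ by this argument. Choosing a smooth $\overline f$ with $\|\overline f - f_0\|_{\CalC^{4,\alpha}}$ small makes all $x$-dependence of the coefficients smooth, and then the bootstrapping in \autoref{thm:parabolic_smoothing} goes through; this is the whole point of the reference-curve construction and the quantitative implicit-function-theorem work in Appendix \ref{proof:lemma_normal_graph}.

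Two smaller remarks. First, solving the scalar graph equation only yields the normally projected flow $\Pi_f^\bot(\partial_t f) = -\nabla_{L^2}\E_\lambda(f)$; to obtain a solution of \eqref{eq:EvolEqn} one must afterwards compose with the flow of diffeomorphisms generated by the tangential component, as in \eqref{eq:flowdiffeos}. You allude to this ("absorbed by reparametrisation invariance") but it is a step that must actually be carried out, and in the $\CalC^{5,\alpha}$ case it is precisely this composition with a reparametrisation of class $\HS[1+\alpha]{\frac{1+\alpha}{4}}$ that accounts for the drop from the regularity $\HS[5+\alpha]{\frac{5+\alpha}{4}}$ of the graph solution to the regularity $\HS[1+\alpha]{\frac{1+\alpha}{4}}$ asserted for $f$ and $\partial_t f$ in (ii). Second, "standard quasilinear parabolic theory" needs a little care here because the relevant existence theorems are formulated on domains; the paper localises via isometric charts on $(\Sph^1,\diff s)$ and a partition of unity (\autoref{thm:linearIsoSchauderM}), and establishes the nonlinear solvability by linearising at the initial value and applying the Inverse Function Theorem together with a cut-off argument to reach the point $0$ in the target space. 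These are implementational rather than conceptual differences, but the reference-curve issue above is a real obstruction to your version of part (ii).
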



The proof consists of several steps. First we reduce the equation to a quasilinear parabolic equation (\eqref{eq:reducedEq} below). 
By Schauder estimates for the linearised equation at the initial value $f_0$ we can solve the nonlinear equation with a fixed point method (\autoref{thm:nonlinear_schauder}).
Bootstrapping then yields the smoothing effect (\autoref{thm:parabolic_smoothing} and \autoref{corol:smoothness_soln}). The uniqueness result for the quasilinear equation follows from the fixed-point method and time-uniform estimates (see Paragraph \ref{proof:uniqueness} in the appendix). \\
Note that a proof of the short time existence for open curves in the context of Sobolev spaces has been shown in \cite{STE_LP}.

\subsection{Hanzawa-type transformation and the solution of an equivalent PDE}

To solve the geometric PDE from \autoref{eq:EvolEqn} we will first write the initial value $f_0$ as a normal graph over some smooth curve (see \autoref{prop:NormalGraph}) and then observe how we can transform the PDE into a quasilinear parabolic equation for some unknown $u \colon \Sph^1 \times [0,T]\to \R$. For $\Hyp^2$ we will repeatedly use the global coordinate chart  from \autoref{subs:hyp}.

\begin{proof}[{Proof of \autoref{thm:STE}}] 1) 
We start with the  existence and smoothness of the solution for smooth initial values $f_0$, but lay the foundations for the proof of the existence and uniqueness with lower regularity of the initial value. Thus we let $f_0 \colon \Sph^1 \to \Hyp^2$ be the smooth immersed initial value of \eqref{eq:EvolEqn} and let $\overline f\colon \Sph^1 \to \Hyp^2$ be a smooth curve with normal unit vector field $\overline N$ along $\overline f$. Using the global chart of $\Hyp^2$ 
and identifying $T_y\R^2 \cong \R^2$ we can translate $\overline N(x) \in T_{\overline f(x)}\Hyp^2$ to $N(x) \in T_{f_0(x)}\Hyp^2$, a vector field along $f_0$ (which is \emph{not} the parallel transport of $\overline N$). Moreover, for any vector field $\Phi$ along $f_0$ we denote the tangential and normal projection along $f_0$ by
\[
 \Pi_{f_0}^\top \Phi \defeq \langle \Phi ,\partial_sf_0\rangle_g \partial_s f_0, \quad \Pi_{f_0}^\bot \Phi \defeq \Phi-\Pi_{f_0}^\top \Phi. \]
With this notation we have $\nabla_{\partial_s}^\bot = \Pi_{f_0}^\bot \circ\nabla_{\partial_s}$ (c.f. \autoref{eq:defNabla}). Analogously we define $\Pi^\bot_h$ for any $\CalC^1$-immersion $h$. 
We have now introduced the notation to state the following proposition, whose proof is given in Appendix \ref{proof:lemma_normal_graph}.

\begin{prop}\label{prop:NormalGraph}
 Let $m\in \N_0$ and $f_0 \colon \Sph^1 \to \Hyp^2$ be a $\CalC^{4+m,\alpha}$-immersion. Then there exists some $\mu> 0$ and a smooth, immersed reference curve $\overline f\colon \Sph^1 \to \Hyp^2$ with smooth unit normal vector field $\overline N$ along $\overline f$ such that for all $h\colon \Sph^1 \to \Hyp^2$ with $\|h - f_0\|_{\CalC^{4+m,\alpha}} \leq \mu$ we have:
 \begin{enumerate}[(i)]
  \item The translated vector field $\overline N$ along $h$ satisfies $\Pi_{h(x)}^\bot (\overline N(x)) \neq 0$ for all $x \in \Sph^1$, whence $\Pi_h^\bot (\overline N)$ is a basis for the normal bundle of $h$ and abusing the notation we find that the mapping $\Pi_h^\bot$ is an isomorphism when restricted to a mapping from the normal bundle of $\overline f$ to the normal bundle of $h$.
  \item There exists some reparametrisation of $h$ such that 
  $h = \overline f + u \overline N$ for some unique $\CalC^{4+m,\alpha}$-function $u\colon \Sph^1 \to \R$.
 \end{enumerate}
\end{prop}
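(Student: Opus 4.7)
The plan is to take $\bar f$ to be a smooth $\CalC^{4+m,\alpha}$-approximation of $f_0$ and $\bar N$ a smooth unit hyperbolic normal along $\bar f$, then to verify (i) by a transversality argument and (ii) by a tubular-neighbourhood / inverse function theorem argument performed in the Euclidean chart of $\Hyp^2$.

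For the construction, I would first choose a smooth $\bar f \colon \Sph^1 \to \Hyp^2$ with $\|f_0 - \bar f\|_{\CalC^{4+m,\alpha}}$ as small as desired: since $f_0(\Sph^1)$ is a compact subset of the open set $\Hyp^2 \subset \R^2$, standard Euclidean mollification of $f_0$ yields such a $\bar f$ whose image stays in $\Hyp^2$ and which is again an immersion. Because the hyperbolic metric is conformally Euclidean, the Euclidean rotation $J$ by $\pi/2$ preserves hyperbolic orthogonality, so I set $\bar N \defeq J \partial_s \bar f$; this is a smooth vector field along $\bar f$, $g$-orthogonal to $\partial_s \bar f$, of hyperbolic length one and Euclidean length $\bar f_2$, hence nowhere zero.

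For part (i) the key observation is that, under the chart identification $T_y \R^2 \cong \R^2$, transversality is a purely linear condition: $\Pi_{h(x)}^\bot \bar N(x) \neq 0$ is equivalent to $\bar N(x)$ and $\partial_x h(x)$ being $\R^2$-linearly independent, i.e.\ to $\det(\bar N(x), \partial_x h(x)) \neq 0$. For $h = \bar f$ this determinant does not vanish since $\bar f$ is an immersion and $\bar N$ is Euclidean-orthogonal to $\partial_x \bar f$. Choosing first the mollification and then $\mu$ small enough makes $\|h - \bar f\|_{\CalC^1}$ small, so the determinant stays uniformly bounded away from zero on $\Sph^1$. The isomorphism statement is then immediate from both normal bundles being rank-one over $\Hyp^2$. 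For part (ii), I apply the tubular-neighbourhood theorem to the smooth map
\[ F \colon \Sph^1 \times (-\delta, \delta) \to \Hyp^2, \quad F(x,u) \defeq \bar f(x) + u \bar N(x). \]
Its Jacobian at $u=0$ has columns $\partial_x \bar f(x)$ and $\bar N(x)$; its determinant coincides with the one from part~(i), hence is bounded away from zero on $\Sph^1$. By the inverse function theorem together with the compactness of $\Sph^1$, for some $\delta>0$ the map $F$ is a diffeomorphism from $\Sph^1 \times (-\delta, \delta)$ onto an open neighbourhood $U$ of $\bar f(\Sph^1)$ in $\Hyp^2$. Shrinking $\mu$ further if necessary ensures $h(\Sph^1) \subset U$, whence the smooth inverse $F^{-1} \circ h$ produces unique $\CalC^{4+m,\alpha}$-maps $x, v \colon \Sph^1 \to \Sph^1$ and $\Sph^1 \to \R$ with $h(y) = \bar f(x(y)) + v(y)\bar N(x(y))$. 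Smallness of $\|h-\bar f\|_{\CalC^1}$ forces $x$ to be $\CalC^1$-close to the identity and hence a $\CalC^{4+m,\alpha}$-diffeomorphism of $\Sph^1$; setting $\phi \defeq x^{-1}$ and $u \defeq v \circ \phi$ then gives $h \circ \phi = \bar f + u \bar N$, with the uniqueness of $u$ inherited from the injectivity of $F$ on the tube.

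The main technical obstacle is the bookkeeping of the smallness parameters — the mollification error controlling $\|\bar f - f_0\|_{\CalC^{4+m,\alpha}}$, the tubular width $\delta$ depending on $\bar f$ and $\bar N$, and the final threshold $\mu$ controlling $\|h - f_0\|_{\CalC^{4+m,\alpha}}$ — chosen in the correct order so that uniform transversality, the inclusion $h(\Sph^1) \subset U$, and the diffeomorphism property of $x$ hold simultaneously. The only additional delicate point is verifying that inversion preserves $\CalC^{4+m,\alpha}$ regularity for diffeomorphisms of $\Sph^1$ that are $\CalC^1$-close to the identity, which is standard; beyond these, the statement reduces to an application of the inverse function theorem.
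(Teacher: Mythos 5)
Your construction of $\overline f$ by mollification and of $\overline N = J\partial_s\overline f$ is fine, and your determinant argument for part (i) is correct: in the global chart the hyperbolic metric is conformal to the Euclidean one, so $\Pi^\bot_{h(x)}\overline N(x)\neq 0$ is indeed equivalent to $\det(\overline N(x),\partial_x h(x))\neq 0$, which is stable under $\CalC^1$-perturbations. Your ordering of the smallness parameters (first the mollification error, then $\delta$, then $\mu$) also avoids any circularity, so the quantitative bookkeeping is not the problem.

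The genuine gap is in part (ii): the global tubular neighbourhood argument requires $\overline f$ to be an \emph{embedding}, whereas the proposition (and the whole paper) only assumes $f_0$, hence $\overline f$, to be an \emph{immersion} of the closed curve $\Sph^1$. If $\overline f$ has a self-intersection, then for every $\delta>0$ the map $F(x,u)=\overline f(x)+u\overline N(x)$ fails to be injective on $\Sph^1\times(-\delta,\delta)$ (two sheets of the tube overlap near the crossing), so there is no open set $U\supset\overline f(\Sph^1)$ on which $F^{-1}$ is defined, and the composition $F^{-1}\circ h$ that produces your maps $x(y)$ and $v(y)$ does not exist. The statement ``$F$ is a diffeomorphism onto a neighbourhood of $\overline f(\Sph^1)$'' is exactly where the argument breaks. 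The repair is to invert $F$ only \emph{locally in the parameter}: for each $y$ one must solve $\langle h(y)-\overline f(x),\partial_x\overline f(x)\rangle_{g(\overline f(x))}=0$ for $x$ near $y$, using that $h$ is close to $\overline f$ as a parametrised curve rather than as a subset of $\Hyp^2$. This is precisely the route the paper takes: it applies the implicit function theorem to the map $F(\tilde\psi,\tilde\phi)=\langle(\tilde f+\tilde\psi)\circ(\operatorname{id}+\tilde\phi)-\tilde f,\partial_x\tilde f\rangle_{g(\tilde f)}$ acting on the perturbation $\tilde\psi$ and the reparametrisation $\tilde\phi$, together with a quantitative lower bound on the radius of the implicit function's domain (so that the admissible size of $\tilde\psi$ depends only on $\|\overline f\|_{\CalC^{4,\alpha}}$, $\min\overline f_2$ and $\min|\partial_x\overline f|$, which lets one choose $\overline f$ close to $f_0$ afterwards), and then bootstraps the regularity of $\tilde\phi$ from the differentiated equation. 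Once you localise in this way your argument essentially collapses onto the paper's; as written, however, it does not cover non-embedded curves, which are an essential part of the setting.
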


To show existence we fix some $\overline f$ as in \autoref{prop:NormalGraph}. We have
\begin{equation}
 \label{eq:u_0} f_0 = \overline f + u_0 \overline N
\end{equation}
for some smooth function $u_0$. 
To find a solution to \autoref{eq:EvolEqn} we make the ansatz
\begin{equation}
\label{eq:ansatz_f}
f(x,t) = \overline f(x) + u(x,t) \overline N(x)
\end{equation}
and calculate, writing $f = (f_1, f_2)$ in our chart and $|\,\cdot\,|_e$ for the Euclidean norm 
the following expressions for $f$ depending on $u$ and its derivatives:
\begin{align*}
\partial_s f = \frac{\partial_xf }{|\partial_xf|_{g(f)}}&= \frac{\partial_x \overline f + (\partial_x u)\overline N + u \partial_x\overline  N}{|\partial_xf|_e \frac{1}{f_2}}
= \frac{(\partial_x u)\overline N}{|\partial_xf|_e }f_2 + P_1(\cdot,u,|\partial_xf|_e^{-1})
\end{align*}
for a smooth function $P_1 \colon \Sph^1\times \R^2 \to T\R^2$ which is a polynomial in the latter arguments for fixed $x \in \Sph^1$. The coefficients of this polynomial are smooth in $x$ as they depend only on $\overline f$ and $\overline N$. 
From \eqref{eq:NablaLocalH} we find for vector fields $\Phi$ along $f$ that
\begin{align*}
 \nabla_{\partial_s} \Phi
&= 
\begin{pmatrix}
                 \frac{f_2}{|\partial_xf|_e } \partial_x \Phi_1 - \frac{1}{|\partial_x f|_e}(\Phi_1 \partial_x f_2 + \Phi_2 \partial_x f_1)\\
                 \frac{f_2}{|\partial_xf|_e } \partial_x \Phi_2 + \frac{1}{|\partial_x f|_e}(\Phi_1 \partial_x f_1 - \Phi_2 \partial_x f_2)
                 \end{pmatrix} .
\end{align*}
Whence, since $\Kapp$ is already normal to $f$,
\begin{align*}
 \Kapp &= \nabla_{\partial_s} \partial_s f = \nabla_{\partial_s}^\bot \partial_sf = \Pi_f^\bot \circ \nabla_{\partial_s} \partial_sf = \Pi_f^\bot (\partial_s \partial_s f + P_2(\cdot,f,\partial_s f))\\
  &= \Pi_f^\bot \left(\frac{(\partial_x^2 u)\overline N }{|\partial_xf|^2_e}f_2^2 + P_3(\cdot,u,\partial_x u,|\partial_xf|_e^{-1})\right)= \frac{\partial_x^2 u}{|\partial_xf|^2_e}f_2^2 \Pi_f^\bot\overline  N + P_4(\cdot,u,\partial_x u,|\partial_xf|_e^{-1}),
\end{align*}
where we used that $\Pi^\bot_f$ only contributes terms of $u, \partial_x u$ and $|\partial_xf|^{-1}_e$. This will be used repeatedly in the following. We find
\begin{align*}
 \nabla_{\partial_s}^\bot \Kapp &= \Pi_f^\bot \nabla_{\partial_s} \Kapp = \Pi_f^ \bot (\partial_s \Kapp ) +  P_5(\cdot,u,\partial_x u,\partial_x^2 u,|\partial_xf|_e^{-1})\\
 &= \frac{\partial_x^3 u}{|\partial_xf|^3_e}f_2^3 \Pi_f^\bot\overline  N + P_6(\cdot,u,\partial_x u, \partial_x^2 u,|\partial_xf|_e^{-1}),\\
(\nabla_{\partial_s}^\bot)^2 \Kapp &= \Pi_f^\bot \nabla_{\partial_s}  \nabla_{\partial_s}^\bot \Kapp = \frac{\partial_x^4 u}{|\partial_xf|^4_e}f_2^4 \Pi_f^\bot \overline N + P_7(\cdot,u,\partial_x u, \partial_x^2 u, \partial_x^3 u,|\partial_xf|_e^{-1})
\end{align*}
and thus using \eqref{eq:gradientElambda} we finally find
\[
\nabla_{L^2}\E_\lambda(f) = \frac{\partial_x^4 u}{|\partial_xf|^4_e}f_2^4 \Pi_f^\bot \overline  N + P_8(\cdot,u,\partial_x u, \partial_x^2 u, \partial_x^3 u,|\partial_xf|_e^{-1}),
\]
where  $P_8$ is some smooth function $P_8 \colon \Sph^1 \times \R^5 \to T\R^2$ which is a polynomial for fixed $x \in \Sph^1 $ in the latter arguments. Since $\Pi_f^\bot \overline N$ is nonvanishing for small $t \in [0,T]$ by \autoref{prop:NormalGraph}, and  $\Pi_f^\bot\overline  N$ and $\nabla \E_\lambda(f)$ are both orthogonal to $\partial_s f$ 
we may write
\[
 P_8(\cdot,u,\partial_x u, \partial_x^2 u, \partial_x^3 u,|\partial_xf|_e^{-1}) =- P(\cdot,u,\partial_x u, \partial_x^2 u, \partial_x^3 u,|\partial_xf|_e^{-1}) \Pi_f^\bot\overline  N
\]
for some smooth $P \colon  \Sph^1 \times \R^5 \to \R$. Since $\partial_t f =\dot u N$ we are lead to consider the equation 
\[
 \dot u \Pi_f^\bot\overline  N = \Pi_f^\bot (\dot u\overline  N)
 = \Pi_f^\bot (\partial_t f) 
 =-\nabla\E_\lambda(f) =  \left(- \frac{\partial_x^4 u}{|\partial_xf|^4_e}f_2^4 +P(\cdot,u,\partial_x u,\ldots)\right) \Pi_f^\bot \overline N,
\]
which is equivalent to 
\begin{equation}
\label{eq:reducedEq}
\begin{cases}
 \dot u = - \frac{\partial_x^4 u}{|\partial_xf|^4_e}f_2^4 +P(\cdot,u,\partial_x u, \partial_x^2 u, \partial_x^3 u,|\partial_xf|_e^{-1})& \text{on }\Sph^1 \times [0,T)\\
 u(\cdot,0) =  u_0(\cdot)
 &\text{on }\Sph^1,
\end{cases}
\end{equation}
with initial value $u_0$ from \autoref{eq:u_0}. 
Since $u_0$ is smooth we find from  \autoref{thm:nonlinear_schauder} and \autoref{corol:smoothness_soln} below that there exists a smooth and unique solution $u \colon \Sph^1 \times [0,T] \to \R$ to \eqref{eq:reducedEq}. 
Defining $f$ by \autoref{eq:ansatz_f}, where $u$ is the obtained solution of \eqref{eq:reducedEq}, we can show that an adequate reparametrisation of the function $f$  
solves \eqref{eq:EvolEqn}. {By construction} $f$ satisfies 
\begin{equation}
 \label{eq:projEvol}\Pi_f^\bot (\partial_t f)  =-\nabla \E_\lambda(f).
\end{equation}
Thus for the smooth function $\xi = \langle \partial_t f, \partial_s f\rangle _{g(f)}(|\partial_x f|_{g(f)})^{-1} 
$ we have 
$
\partial_t f =-\nabla\E_\lambda(f) + \xi \partial_x f.$ 
Then there exists a unique smooth solution $\Phi$ to the ODE system
\begin{equation}\label{eq:flowdiffeos}
 \begin{cases}
  \dot \Phi(x,t) = -\xi(\Phi(x,t),t), & (x,t) \in \Sph^1 \times [0,T]\\
  \Phi(x,0)=x,& x\in\Sph^1.\end{cases}
\end{equation}
Thus $(\Phi(\cdot,t))$ is a family of diffeomorphism  of $\Sph^1$ when we again choose some smaller $T>0$, if necessary. For the composition $\tilde f = f \circ \Phi$ we find $\tilde f(\cdot, 0) = f(\operatorname{id}, 0) =  f_0$ and
\[
\partial_t \tilde f = \partial_t f \circ \Phi + (\partial_x f \circ \Phi) \dot \Phi = -\nabla \E_\lambda(f)\circ \Phi + (\xi \partial_x f)\circ \Phi - (\partial_x f \circ \Phi) \xi \circ \Phi= -\nabla \E_\lambda(\tilde f),\]
as $\nabla \E_\lambda(f)\circ \Phi = \nabla\E_\lambda(f\circ \Phi)$ from the invariance of $\E_\lambda$ under diffeomorphisms of $\Sph^1$.
\\2) Uniqueness of the smooth solution. 
Let $f\colon \Sph^1 \times [0,T]$ be any solution to \eqref{eq:EvolEqn}. We will show that $f$ is equal to our constructed solution $\tilde f$ up to a diffeomorphism of $\Sph^1$. Let us again fix some $\overline f$ as in \autoref{prop:NormalGraph}. For $T$ small enough there exists a solution $\Psi \in \CalC^\infty(\Sph^1 \times [0,T],\Sph^1)$ of the ODE
\[
 \begin{cases}
  \dot \Psi(x,t) = \displaystyle -\frac{\langle (\partial_t f )(\Psi(x,t),t), \partial_x \overline f(x)\rangle_{g(\overline f(x))}}{\langle (\partial_x f)(\Psi(x,t),t), \partial_x \overline f (x)\rangle_{g(\overline f(x))} }, & (x,t) \in \Sph^1 \times [0,T],\\
  \Psi(x,0)=x,& x\in\Sph^1\end{cases}
\]
of diffeomorphisms $\Psi(\cdot,t)$ of $\Sph^1$. For the composition $\hat f = f \circ \Psi$ we find $\hat f(x,0) = f(\Psi(x,0),0) = f(x,0) = f_0 = \overline f + u_0 N$ and
\begin{align*}
  \partial_t \Pi_{\overline f}^\top({\hat f} - \overline f)&=   \Pi_{\overline f}^\top(\partial_t{\hat f} ) =\Pi_{\overline f}^\top(\partial_t{f}\circ \Psi + (\partial_x f)\circ \Psi \dot \Psi)\\
  &= \left( \langle\partial_t{f}\circ \Psi,\partial_x \overline f \rangle + \langle (\partial_x f)\circ \Psi, \partial_x \overline f \rangle \dot \Psi \right)\frac{\partial_x \overline f}{|\partial_x \overline f|^2} =0.
\end{align*}
Whence $\hat f$ satisfies \eqref{eq:ansatz_f} for some unique function $u$, which then solves \eqref{eq:reducedEq} and $u(0)=u_0$, whence $\hat f$ equals the constructed solution $\tilde f$, i.e. $f = \tilde f \circ \Psi^{-1}$ is a reparametrisation of our constructed solution.\\
3) Existence of a solution for $f_0 \in \CalC^{5,\alpha}$. For $f_0 \in \CalC^{5,\alpha}$ we apply \autoref{prop:NormalGraph} (ii) and have (after reparametrising $f_0$) the representation \eqref{eq:u_0} with some function $u_0 \in \CalC^{5,\alpha}$. We proceed as before and have \eqref{eq:reducedEq}. 
The Schauder theory from \autoref{thm:nonlinear_schauder} shows that \eqref{eq:reducedEq} has a unique solution $u$ in the parabolic H\"older space $\HS[4+\alpha]{\frac{4+\alpha}{4}}(\Sph^1 \times [0,T])$ for some $T>0$, and $u$ satisfies $\HS[5+\alpha]{\frac{5+\alpha}{4}}(\Sph^1 \times [0,T])$ by  \autoref{corol:smoothness_soln}. Furthermore, we find from \autoref{thm:parabolic_smoothing} that the solution $u$ (and whence $f$) is smooth on $\Sph^1 \times [\delta,T]$ for any $\delta > 0$. 
Then $f \in \HS[5+\alpha]{\frac{5+\alpha}{4}}(\Sph^1 \times [0,T])$ satisfies \eqref{eq:projEvol}, and for the function $\xi$ as above we have $\xi \in \HS[1+\alpha]{\frac{1+\alpha}{4}}(\Sph^1 \times [0,T]) \cap \CalC^\infty( \Sph^1 \times (0,T])$.
The unique solution $\Phi$ to the ODE system
\eqref{eq:flowdiffeos}
satisfies $\Phi \in \HS[1+\alpha]{\frac{1+\alpha}{4}}(\Sph^1 \times [0,T],\Sph^1)\cap \CalC^\infty(\Sph^1 \times (0,T],\Sph^1)$ and $\partial_t \Phi \in \HS[1+\alpha]{\frac{1+\alpha}{4}}(\Sph^1 \times [0,T],\Sph^1)$ by \cite[Theorem 2.5.13]{Gerhardt}. 
Then $\tilde f = f \circ \Phi$ satisfies $\tilde f, \partial_t \tilde f \in \HS[1+\alpha]{\frac{1+\alpha}{4}}(\Sph^1 \times [0,T])\cap \CalC^\infty(\Sph^1 \times (0,T])$ as claimed and as above we have
$\partial_t \tilde f = 
-\nabla \E_\lambda(\tilde f)$. 
%
\end{proof}
\begin{remark}
 Due to the construction with the flow on the domain $\Sph^1 \times [0,T]$ in the proof of \autoref{thm:STE} we have no uniqueness for the solution $f \in \HS[1+\alpha]{\frac{1+\alpha}{4}}(\Sph^1 \times [0,T];\mathbb{H}^2)$ from above, but it follows from part 2) of the proof that any other solution $\tilde f$ that additionally satisfies $\partial_ t \tilde f, \partial_x \tilde f \in \HS[4+\alpha]{\frac{4+\alpha}{4}}$ equals a reparametrisation of the constructed solution $f$.
\end{remark}

In the next paragraphs we give a proof of the existence, uniqueness and smoothness of the quasilinear parabolic equation from \autoref{eq:reducedEq}, that is, the cited theorem \autoref{thm:nonlinear_schauder}. We postpone a few minor proofs to \autoref{sec:STE}. We start by giving an overview on parabolic H\"older spaces (c.f. \cite[Def 2.5.2]{Gerhardt}).

\subsection{Parabolic H\"older spaces and the linear problem}
\label{sec:hoelderspace}
Let $\alpha \in (0,1), k \geq 4$. The parabolic H\"older space of order $4$, $H^{k+\alpha, \frac{k+\alpha}{4}} (\Sph^1 \times [0,T])$, is the space of all functions $f \colon \Sph^1 \times [0,T]\to\R$ with continuous derivatives 
$\nabla^\beta \partial_t^\mu f$ for all $|\beta|+ 4\mu \leq k$ and finite norm
\begin{align*}
    \|f\|_{\HS[k+\alpha]{\frac{k+\alpha}{4}}(\Sph^1 \times [0,T])} \defeq 
 &
 \sum _{|\beta| + 4 \mu \leq k} \sup_{(x,t) \in \Sph^1 \times [0,T]} |\nabla^\beta \partial_t^\mu f| 
 +\sum _{|\beta| +4\mu = k} \sup_{t \in [0,T]}   [\nabla^\beta \partial_t^\mu f( \cdot, t)]_\alpha
    \\
  &+\!\!\!\sum _{0< k + \alpha - |\beta| - 4 \mu < 4} \sup_{x \in \Sph^1}
   [\nabla^\beta \partial_t^\mu f(x,\cdot)]_{\frac{1}{4}(k+\alpha -  |\beta| - 4 \mu)}.
  \end{align*}
 Here we write $\nabla$ for the covariant derivative on the Riemannian manifold $\Sph^1$, $\partial_t$ for the derivative with respect to $t\in[0,T]$ and define the H\"older seminorm for tensor fields $T \in \Gamma(T^{k,0}\Sph^1)$ as follows: $ [T]_\alpha = \sup_{x \neq y} \frac{|T(x) -\tau_{y,x}T(y)|_g}{d_g(x,y)^\alpha}$, where $\tau_{y,x}$ is the parallel transport from $y$ to $x$ and $d_g$ is the metric on $(\Sph^1,g)$. Similarly we denote the usual H\"older spaces on $\Sph^1$ by $\CalC^{k,\alpha}(\Sph^1)$.

Let us consider the following problem for 
$u \colon \Sph^1 \times [0,T]\to \R$.
\begin{equation}\label{eq:linearParaboliconM}
 \begin{cases}
  Lu \defeq  \partial_t u - \sum_{|\gamma|\leq 4} a_\gamma \nabla^\gamma u = f & \text{on }\Sph^1 \times [0,T)\\
  u(0) = u_0 & \text{on }\Sph^1.
 \end{cases}
\end{equation}
Under 
appropriate assumptions we find that $L$ is an isomorphism of Banach spaces. Here we use the notation $\CalC^{4+s} = \CalC^{\lfloor 4+s \rfloor,4+s - \lfloor 4+s \rfloor}$ for $s\notin \N_0$.
\begin{mythm} Let\label{thm:linearIsoSchauderM} $L$ be parabolic (in the sense of Petrovskii) and $a_\gamma \in 
\HS[s]{\frac{s}{4}}(\Sph^1 \times [0,T])$ for all $\gamma =0,1, \ldots, 4$ for some $s > 0$, $s \notin \N$. Then there exists some constant $C>0$ such that for all $f \in H^{s,\frac{s}{4}}(\Sph^1 \times [0,T])$ and $u_0 \in \CalC^{4+s}(\Sph^1 \times [0,T])$, there exists a solution $u \in \HS[4+s]{\frac{4+s}{4}} 
(\Sph^1 \times [0,T])$ to the problem \eqref{eq:linearParaboliconM}. The solution is unique and satisfies
\begin{equation}
\label{eq:norm_iso_L_schauderM}
\| u \| _ {\HS[4+s]{\frac{4+s}{4}}(\Sph^1 \times [0,T])
} \leq C \left(\|f\|_ {\HS[s]{\frac{s}{4}} (\Sph^1 \times [0,T])} + \| u_0 \| _ {\CalC^{4+s}(\Sph^1 \times [0,T])}\right).
\end{equation}
\end{mythm}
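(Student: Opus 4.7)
The plan is to reduce \autoref{thm:linearIsoSchauderM} to the classical Schauder theory for fourth--order parabolic equations on Euclidean space, using a finite atlas on $\Sph^1$ together with a partition of unity, and then invoke the method of continuity for existence. Cover $\Sph^1$ by the four charts $\phi_i$ from \eqref{eq:charts} (or any finite smooth atlas), choose a subordinate smooth partition of unity $\{\chi_i\}$, and for each $i$ consider $u_i \defeq \chi_i u$. In local coordinates on $\Sph^1 \times [0,T]$ the covariant derivatives $\nabla^\gamma$ are expressed through $\partial_x^\gamma$ modulo lower--order terms whose coefficients are smooth in $x$, and the Christoffel symbols are time--independent. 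Thus each $u_i$ solves a scalar fourth--order linear parabolic Cauchy problem on an interval $I_i \subset \R$ (which, after localisation by $\chi_i$, has trivial boundary conditions because $\operatorname{supp}\chi_i$ is compactly contained in $I_i$), with right--hand side $\chi_i f$ plus commutator terms of the form $\sum_{|\gamma|\le 3} b_\gamma \nabla^\gamma u$, whose coefficients $b_\gamma$ lie in $\HS[s]{s/4}$.

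For the a priori estimate \eqref{eq:norm_iso_L_schauderM} one applies the classical Schauder estimate of Solonnikov/Eidelman for fourth--order parabolic Cauchy problems with $H^{s,s/4}$ coefficients on $\R \times [0,T]$ (see \cite[Thm.\ 2.5.7 and 2.5.13]{Gerhardt}), whose proof proceeds by freezing coefficients at a point $(x_0,t_0)$, invoking the constant--coefficient kernel estimates for $\partial_t + (-\Delta)^2$, and absorbing the perturbation from the variation of the coefficients on small parabolic cylinders using the Hölder continuity hypothesis on $a_\gamma$. Applying this to each $u_i$, summing over $i$, and absorbing the commutator terms by means of the interpolation inequality
\[
 \|u\|_{\HS[k]{k/4}} \le \eps \|u\|_{\HS[4+s]{(4+s)/4}} + C(\eps) \|u\|_{\HS[0]{0}}, \qquad 0 < k < 4+s,
\]
yields the estimate \eqref{eq:norm_iso_L_schauderM}; the zeroth--order term is controlled by a standard Gronwall argument applied to $\tfrac{d}{dt}\int_{\Sph^1} u^2\,dx$, exploiting parabolicity.

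Uniqueness follows at once from \eqref{eq:norm_iso_L_schauderM}: if $u_1,u_2$ are two solutions with the same data, then $u_1-u_2$ solves the problem with $f=0, u_0=0$, and \eqref{eq:norm_iso_L_schauderM} forces $u_1 = u_2$. For existence one uses the method of continuity. Define
\[
 L_\tau \defeq \tau L + (1-\tau)\bigl(\partial_t + \Delta^2\bigr), \qquad \tau \in [0,1].
\]
Each $L_\tau$ is parabolic in the sense of Petrovskii and has coefficients in $\HS[s]{s/4}$, so the a priori estimate above holds with a constant independent of $\tau$. The operator $L_0 = \partial_t + \Delta^2$ is invertible between the relevant Banach spaces: on $\Sph^1$ this can be shown by expanding in the Fourier basis, where the equation decouples into countably many scalar ODEs in $t$ which are solved explicitly. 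The method of continuity then propagates invertibility from $\tau=0$ to $\tau=1$, giving existence for $L=L_1$.

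The main obstacle is the a priori Schauder estimate, which couples the spatial and temporal Hölder norms in a subtle way; the parabolic scaling $\mathrm{dist}((x,t),(y,s)) \sim |x-y| + |t-s|^{1/4}$ is what allows the commutator terms arising both from the partition of unity and from the freezing of coefficients to be absorbed into the leading term, and this is the heart of the classical theory invoked above. Once \eqref{eq:norm_iso_L_schauderM} is in place, everything else — uniqueness and the continuity method for existence — is formal.
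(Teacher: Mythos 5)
Your proposal is correct in substance but takes a genuinely different route to existence than the paper. The paper does not establish a global a priori estimate and then run the method of continuity; instead it localises the \emph{data}: for each chart $\phi_i$ of the atlas \eqref{eq:charts} it solves the initial--boundary value problem on the interval $U_i$ with right-hand side $(\chi_i f)\circ\tilde\phi_i$, initial value $(\chi_i u_0)\circ\phi_i$ and homogeneous Dirichlet conditions, quoting the Eidelman--Solonnikov theorem (\autoref{thm:linearIsoSchauder}) for each localised problem (the cut-off makes the compatibility conditions trivial), and then sets $u=\sum_i u_i\circ\tilde\phi_i^{-1}$; the global H\"older estimate is assembled by distinguishing the cases $d(x,y)\geq\tfrac14$ and $d(x,y)<\tfrac14$. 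Uniqueness is obtained there directly from Young and Gr\"onwall rather than from the a priori estimate. Your path (local Schauder estimate for the Cauchy problem, absorption of the partition-of-unity commutators by interpolation, continuity method starting from $\partial_t+\Delta^2$ diagonalised in Fourier series) is the standard textbook argument on a closed manifold and has the advantage of avoiding any gluing of solutions of boundary value problems across overlapping charts; the paper's path buys a single black-box existence theorem at the price of that gluing. One step of yours needs repair: controlling the zeroth-order term by Gr\"onwall applied to $\frac{\diff}{\diff t}\int_{\Sph^1}u^2\diff x$ requires integrating $\int u\,a_4\partial_x^4u\diff x$ by parts, which is not legitimate when $a_4$ is merely in $\HS[s]{\frac{s}{4}}$ with $s<1$, since the coefficients need not be differentiable in $x$. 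Replace it by the usual short-time absorption $\|u\|_{C^0(\Sph^1\times[0,T'])}\leq\|u_0\|_{C^0}+T'\|\partial_t u\|_{C^0}$, absorb the resulting $\eps$-multiple of the top norm for $T'$ small, and iterate finitely many times to reach $T$; with that modification your argument closes.
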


\autoref{thm:linearIsoSchauderM} follows from the classic Schauder theory for parabolic problems on domains. It  is given in the appendix in Paragraph \ref{proof:ProofLinearSchauderM}.

\subsection{The nonlinear problem - existence, uniqueness and smoothing}
Let $F\colon \Sph^1 \times \R^5 \times (0,\infty) \to \R$ be such that 
\[F(\cdot, u, \partial_x u, \ldots, \partial_x^4 u, |\partial_xf|_e^{-1}) =  - \frac{\partial_x^4 u}{|\partial_xf|^4_e}f_2^4 +P(\cdot,u,\partial_x u, \partial_x^2 u, \partial_x^3 u,|\partial_xf|_e^{-1})
\]
(c.f. \eqref{eq:reducedEq}). Let 
\begin{equation*}
 W = \{w \in \CalC^1(\Sph^1)\st f(x) = \overline f(x) + w(x)\overline  N(x) \text{ from \eqref{eq:ansatz_f} is an immersion of }\Sph^1\text{ into }\Hyp^2\}.
\end{equation*}
Then $W$ is an open subset containing $0$, and since $P(x,\ldots)$ is a polynomial with smooth coefficients we find that $F$ induces a smooth mapping
\[
  \mathbf{F}\colon \HS[4+\alpha]{\frac{4+\alpha}{4}} (\Sph^1 \times [0,T])\cap W \to \HS[\alpha]{\frac{\alpha}{4}} (\Sph^1 \times [0,T]), u \mapsto \mathbf{F}[u] \defeq F(\cdot, u, \partial_x u, \ldots, \partial_x^4 u, |\partial_xf|_e^{-1}),
\]
where we write $\HS[4+\alpha]{\frac{4+\alpha}{4}} (\Sph^1 \times [0,T])\cap W$ for the set of functions $u \in  \HS[4+\alpha]{\frac{4+\alpha}{4}} (\Sph^1 \times [0,T])$ that satisfy $u(\cdot,t) \in W$ for all $0 \leq t \leq T$.\\
For the derivative of $\mathbf{F}$ at $u \in \HS[4+\alpha]{\frac{4+\alpha}{4}}(\Sph^1 \times [0,T])\cap W$
we find
\begin{equation}\label{eq:DF[v]}
  D\mathbf{F}[u]v = \sum_{|\gamma|\leq 4} a_\gamma \partial_x^\gamma v
\end{equation}
with
\[a_4(x,t) = -\frac{f_2^4}{|\partial_xf|^4_e} = -\frac{ \overline f_2 + u(x,t)\overline N_2(x)}{|\partial_x(\overline f + u(x,t)\overline N(x))|^4_e} 
 \]
 and $
 a_\gamma(x,t) = \tilde a_\gamma(x,t, u,\partial_xu,\partial_x^2 u, \partial_x^3 u,|\partial_xf|^{-1}_e)$, 
 $\gamma = 0,\ldots,3,$ 
 for some smooth functions $\tilde a_\gamma\colon \Sph^1 \times [0,T] \times \R^4 \times (0,\infty) \to \R$ that are polynomials for fixed $(x,t)$. By continuity and compactness we find for $u(\cdot, 0) \in W$ some $\tilde \delta > 0, \delta > 0$ such that $f_2 \geq \tilde \delta$ and $\delta \leq -a_4(x,t) \leq \frac{1}{\delta}$ uniformly on $\Sph^1 \times [0,T]$, which shows that $L = \partial_t - D\mathbf{F}[{u}]$ is parabolic in the sense of Petrovskii.\\
 We can now solve \autoref{eq:reducedEq}, whose realisation in $\HS[4+\alpha]{\frac{4+\alpha}{4}} (\Sph^1 \times [0,\varepsilon])$ is now given by
\begin{equation} \label{eq:schauder_nonlinear}
 \left \{
\begin{array}{rl}
  \dot u &= \mathbf{F}[u]\\
  u(\cdot, 0) &= u_0.
\end{array}
\right.
\end{equation}

\begin{mythm}
 Let $u_0 \in \CalC^{4,\alpha}(\Sph^1)\cap W$.  \label{thm:nonlinear_schauder}
Then there exists some $\varepsilon > 0$ and a unique solution $u \in 
 \HS[4+\alpha]{\frac{4+\alpha}{4}} (\Sph^1 \times [0,\varepsilon])$ to the nonlinear problem \eqref{eq:schauder_nonlinear}.
\end{mythm}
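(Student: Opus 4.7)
The plan is to reformulate \eqref{eq:schauder_nonlinear} as a fixed-point equation and solve it by Banach contraction in a small ball inside $\HS[4+\alpha]{\frac{4+\alpha}{4}}(\Sph^1\times[0,\varepsilon])$, using the linear isomorphism \autoref{thm:linearIsoSchauderM} as the main analytic tool. First I would fix the constant-in-time extension $\bar u_0(x,t)\defeq u_0(x)$, which lies in $\HS[4+\alpha]{\frac{4+\alpha}{4}}(\Sph^1\times[0,T])$ and takes values in $W$ for every $t$ since $u_0\in W$ is independent of time. Linearising $\mathbf{F}$ at $\bar u_0$ yields the operator $L_0\defeq \partial_t-D\mathbf{F}[\bar u_0]$; its coefficients $a_\gamma$ from \eqref{eq:DF[v]} lie in $\HS[\alpha]{\frac{\alpha}{4}}(\Sph^1\times[0,T])$ by the smoothness of $\mathbf{F}$, and the discussion following \eqref{eq:DF[v]} shows that $L_0$ is uniformly parabolic in the sense of Petrovskii after possibly shrinking $T$, so \autoref{thm:linearIsoSchauderM} with $s=\alpha$ applies. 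Writing \eqref{eq:schauder_nonlinear} as $L_0 u=\mathbf{G}[u]$ with $u(\cdot,0)=u_0$ and nonlinear remainder $\mathbf{G}[u]\defeq \mathbf{F}[u]-D\mathbf{F}[\bar u_0]u$, the equation becomes the fixed-point problem $u=T[u]\defeq L_0^{-1}(\mathbf{G}[u],u_0)$.

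\textbf{Contraction argument.} I would apply the Banach fixed point theorem to $T$ in the closed ball
\[
 B_r \defeq \{u \in \HS[4+\alpha]{\frac{4+\alpha}{4}}(\Sph^1\times[0,\varepsilon]) : u(\cdot,0)=u_0,\ \|u-\bar u_0\|_{\HS[4+\alpha]{\frac{4+\alpha}{4}}} \leq r\},
\]
choosing $r$ so small that the embedding $\HS[4+\alpha]{\frac{4+\alpha}{4}}\hookrightarrow \CalC^1(\Sph^1\times[0,\varepsilon])$ combined with openness of $W$ forces $u(\cdot,t)\in W$ for every $u\in B_r$ and $t\in[0,\varepsilon]$, so that $\mathbf{F}[u]$ is defined. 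Since $D\mathbf{G}[\bar u_0]=0$, the smoothness of $\mathbf{F}$ together with the mean value inequality yields $\|\mathbf{G}[u]-\mathbf{G}[v]\|_{\HS[\alpha]{\frac{\alpha}{4}}}\leq C r\,\|u-v\|_{\HS[4+\alpha]{\frac{4+\alpha}{4}}}$ on $B_r$, and by \eqref{eq:norm_iso_L_schauderM} the map $T$ is a contraction with constant $\leq C'r$, which can be made strictly less than $1$ by shrinking $r$.

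\textbf{Main obstacle and uniqueness.} The hard step is the self-map property, namely $\|T[\bar u_0]-\bar u_0\|_{\HS[4+\alpha]{\frac{4+\alpha}{4}}}\leq r/2$ on $[0,\varepsilon]$. A short computation shows $w\defeq T[\bar u_0]-\bar u_0$ solves $L_0 w=\mathbf{F}[\bar u_0]$ with vanishing initial trace $w(\cdot,0)=0$, so the obstacle reduces to a \emph{short-time smallness} estimate in parabolic H\"older spaces: for solutions of the linear problem with vanishing initial trace one must show $\|w\|_{\HS[4+\alpha]{\frac{4+\alpha}{4}}(\Sph^1\times[0,\varepsilon])}\to 0$ as $\varepsilon\searrow 0$. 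This is standard but delicate, since the parabolic H\"older norm is not automatically continuous at $\varepsilon=0$ without the vanishing-trace condition; it is proved by interpolation between a uniform $L^\infty$-estimate for $w$ on $[0,\varepsilon]$ and the Schauder estimate \eqref{eq:norm_iso_L_schauderM} on a fixed reference interval. Once this is in place, shrinking $\varepsilon$ closes the self-map property; uniqueness in $\HS[4+\alpha]{\frac{4+\alpha}{4}}(\Sph^1\times[0,\varepsilon])$ then follows from Banach's theorem on small subintervals together with a standard continuation argument, since any other solution $v$ is forced into $B_r$ on a subinterval by continuity and hence coincides with $u$ there.
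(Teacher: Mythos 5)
Your overall strategy (Banach contraction around the constant-in-time extension $\bar u_0$, using \autoref{thm:linearIsoSchauderM} to invert $L_0=\partial_t-D\mathbf{F}[\bar u_0]$) is a genuinely different route from the paper, which instead solves an auxiliary linear problem, applies the Inverse Function Theorem to $\eta\mapsto \dot{\tilde u}+\dot\eta-\mathbf{F}[\tilde u+\eta]$ on spaces with vanishing initial trace, and reaches the nonlinear equation by approximating with cut-off right-hand sides via a compact embedding. The contraction set-up and the Lipschitz estimate for $\mathbf{G}$ on $B_r$ are fine. However, the step you correctly single out as the main obstacle is resolved incorrectly, and the gap is genuine.

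The claim that $\|w\|_{\HS[4+\alpha]{\frac{4+\alpha}{4}}(\Sph^1\times[0,\varepsilon])}\to 0$ as $\varepsilon\searrow 0$ for the solution of $L_0w=\mathbf{F}[\bar u_0]$, $w(\cdot,0)=0$, is false in the big H\"older space. In the model case $\partial_t w+\partial_x^4w=f$ with $f=f(x)$ time-independent one has $\partial_x^4 w(\cdot,t)=(I-e^{-t\partial_x^4})f$ up to lower-order corrections, and $e^{-t\partial_x^4}f\to f$ in $\CalC^{0,\alpha}$ only when $f$ lies in the \emph{little} H\"older space $h^{0,\alpha}$; for generic $u_0\in\CalC^{4,\alpha}$ the datum $\mathbf{F}[u_0]$ is merely in $\CalC^{0,\alpha}$, so $\sup_{t\le\varepsilon}[\partial_x^4w(\cdot,t)]_\alpha$ stays bounded away from $0$. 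Your proposed interpolation cannot repair this: interpolating the $L^\infty$-smallness ($\|w\|_\infty\lesssim\varepsilon$) against a uniform bound in $\HS[4+\alpha]{\frac{4+\alpha}{4}}$ yields smallness only in $\HS[4+\beta]{\frac{4+\beta}{4}}$ for $\beta<\alpha$, never at the endpoint $\alpha$ (there is no bound at a regularity strictly above $4+\alpha$ to interpolate against, since the data is only $\CalC^{0,\alpha}$). Consequently the self-map property of $T$ on $B_r\subset\HS[4+\alpha]{\frac{4+\alpha}{4}}$ fails. The repair is exactly the paper's device: run the fixed-point (or inverse-function) argument at a strictly lower exponent $\beta<\alpha$, where the smallness or the compactness of $\HS[\alpha]{\frac{\alpha}{4}}\hookrightarrow\HS[\beta]{\frac{\beta}{4}}$ is available, obtain a solution in $\HS[4+\beta]{\frac{4+\beta}{4}}$, and then recover $u\in\HS[4+\alpha]{\frac{4+\alpha}{4}}$ by one more application of the linear Schauder theory to the equation \eqref{eq:schauder_nonlinear_bootstrapping} with frozen coefficients. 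With that modification your contraction argument goes through and the uniqueness/continuation step is sound.
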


\begin{proof}
We start with the existence part, the uniqueness is shown in the appendix in paragraph \ref{proof:uniqueness}. 
Since $\mathbf{F}[u_0] - D\mathbf F[0] u_0 \in\HS[\alpha]{\frac{\alpha}{4}} (\Sph^1 \times [0,T])$ by the remark above we find from \autoref{thm:linearIsoSchauderM} a unique solution $\tilde u \in  \HS[4+\alpha]{\frac{4+\alpha}{4}} (\Sph^1 \times [0,T])$ to the linear problem
 \begin{equation}\label{eq:linear_soln_tilde_u}
 \left \{
\begin{array}{rl}
  \dot {\tilde u} - D\mathbf{F}[0]\tilde u &=  \mathbf{F}[u_0] - D\mathbf{F}[0]u_0,\\
  \tilde u(\cdot, 0) &= u_0.
\end{array}
\right.
\end{equation}
If necessary, we can make $T$ smaller such that $\tilde u(\cdot,t) \in W$ for all $0 \leq t \leq T$.
Let $\tilde f \defeq \dot{\tilde u} - \mathbf{F}[\tilde u]$. Then $\tilde f \in \HS[\alpha]{\frac{\alpha}{4}}(\Sph^1 \times [0,T])$  and by smoothness of $\mathbf{F}$ we have
\[\tilde f(\cdot, 0) 
= D\mathbf{F}[0]\tilde u|_{t=0} + \mathbf{F}[u_0](\cdot,0) -D\mathbf{F}[0]u_0|_{t=0} - \mathbf{F}[u_0](\cdot,0) = 0.\]
To apply the Inverse Function Theorem we define for some $0 < \beta < \alpha$:
\begin{equation}
\Xcalnull_T \defeq \{\eta \in \HS[4+\beta]{\frac{4+\beta}{4}}(\Sph^1\times [0,T]) \st \eta(\cdot, 0) \equiv 0
\},\quad 
\label{eq:Y_0}
 \Ycalnull_T \defeq 
 \HS[\beta]{\frac{\beta}{4}}(\Sph^1\times [0,T])
\end{equation}
and
\begin{equation}
 \label{eq:defPhi}
 \Phi\colon \Xcalnull_T \to \Ycalnull_T, \qquad \eta \mapsto \dot{\tilde u} + \dot \eta -\mathbf{F}[\tilde u + \eta].
\end{equation}
It follows that $\Phi$ is well defined and smooth on a neighbourhood of $0$. 
 Moreover, $\Phi[0] = \tilde f$. Now we deduce from \autoref{thm:linearIsoSchauderM} that the linearisation
$D\Phi[0]{\eta} = \dot \eta - D\mathbf{F}[\tilde u ]\eta \defeq L_{\tilde u}\eta
$ 
is a linear isomorphism $L_{\tilde u}\colon \Xcalnull_T \to\Ycalnull_T$. Indeed the operator $L_{\tilde u}$ is the restriction of the parabolic operator of the problem
\begin{equation} \label{eq:schauder_new_linear}
 \left \{
\begin{array}{rl}
  L_{\tilde u}v &= g\\
v(0,\cdot) &= v_0
\end{array}
\right.
\end{equation}
on the space of vanishing trace at time $t=0$.  
Thus, the Inverse Function Theorem \cite[Theorem 4.F]{ZeidlerI} yields the existence of neighbourhoods $U \subset \Xcalnull_T$ of $0$, $V \subset \Ycalnull_T$ of $\Phi[0] = \tilde f$ such that
\begin{equation}\label{eq:PhiIso}
 \Phi\colon U \to V
\end{equation}
is a diffeomorphism. 
To show that $0 \in V$ we define for all $0 < \varepsilon < \min\{1,\frac{T}{2}\}$ a cut-off function $\phi_\varepsilon \in \mathcal{C}^\infty([0,T])$ satisfying
\[
 0 \leq \phi_\varepsilon \leq 1, \quad 0 \leq \dot \phi_\varepsilon \leq \frac{2}{\varepsilon}\quad  \text{ and }\quad 
\phi_\varepsilon(t) = \begin{cases}
                       0, & 0 \leq t \leq \varepsilon,\\
                       1, & 2\varepsilon \leq t \leq T.
                      \end{cases}
\]
Let $f_\varepsilon \defeq \phi_\varepsilon \tilde f \in \HS[\alpha]{\frac{\alpha}{4}}$. It follows from {\cite[Lemma 2.5.8]{Gerhardt}} that the family $(f_\varepsilon)_\varepsilon$ is uniformly bounded in $\HS[\alpha]{\frac{\alpha}{4}}(\Sph^1 \times [0,T])$.
%
%
The inclusion $\HS[\alpha]{\frac{\alpha}{4}}(\Sph^1 \times [0,T]) \hookrightarrow \HS[\beta]{\frac{\beta}{4}}(\Sph^1 \times [0,T])$ is compact by {Arzel\`{a}-Ascoli}, 
whence there exists some $\hat f \in \HS[\alpha]{\frac{\alpha}{4}}$ such that 
\[
 |f_{\varepsilon_k} - \hat f|_{\HS[\beta]{\frac{\beta}{4}}} \to 0
 \]
for some subsequence $\varepsilon_k \to 0$.
On the other hand,  $\tilde f$ satisfies $\tilde f(\cdot, 0) = 0$ and is continuous, hence $f_\varepsilon \to \tilde f$ uniformly 
as $\varepsilon \to 0$, thus we find $\tilde f = \hat f$ and
$ f_{\varepsilon_k} \to \tilde f\text{ in }\Ycalnull_T.
$
Thus there exists some $\varepsilon \defeq \varepsilon_{k_0} > 0$ such that $f_\varepsilon \in V$. Since $\Phi\colon U \to V$ is a diffeomorphism there exists some $\eta \in U$ such that$  f_\varepsilon = \Phi[\eta]= \dot{\tilde u} + \dot \eta -F[\tilde u + \eta]$. This implies that $u \defeq \tilde u + \eta \in \HS[4+\beta]{\frac{4+\beta}{4}}(\Sph^1 \times [0,T])$ satisfies
\begin{equation*}
 \left \{
\begin{array}{rl}
  \dot u &= \mathbf{F}[u] + f_\varepsilon\\
  u(\cdot, 0) &= 0+ \tilde u(\cdot, 0) = u_0.
\end{array}
\right.
\end{equation*}
Applying the definition of $f_\varepsilon$ we find that in particular $u$ solves \eqref{eq:schauder_nonlinear} on $\Sph^1 \times [0,\varepsilon]$.\\
To show that $u \in \HS[4+\alpha]{\frac{4+\alpha}{4}}(\Sph^1 \times [0,\varepsilon])$ we let
 $\tilde a_4(x,t) \defeq \displaystyle \frac{f_2^4}{|\partial_xf|_e^4}
 $ (where we set $f = \overline f + u\overline N$ as in \eqref{eq:ansatz_f}), then $\tilde a_4 \in \HS[3+\beta]{\frac{3+ \beta}{4}}\subset \HS[\alpha]{\frac{\alpha}{4}}$ 
and also $\tilde f(x,t) \defeq P(x,t,u,\ldots,\partial_x^3 u, |\partial_xf|_e^{-1})\in \HS[1+\beta]{\frac{1+ \beta}{4}}\subset \HS[\alpha]{\frac{\alpha}{4}}$. 
 Thus $u \in \HS[4+\beta]{\frac{4+\beta}{4}} (\Sph^1 \times [0,\varepsilon])$ is the unique solution to the linear problem 
 \begin{equation} \label{eq:schauder_nonlinear_bootstrapping}
 \left \{
\begin{array}{rl}
  \partial_t u - \tilde a_4 (x,t) \partial_x^4 u &= \tilde f(x,t)\\
 u(0,\cdot) &= u_0
\end{array}
\right.
\end{equation}
with data $u_0 \in \CalC^{4,\alpha}$, $\tilde f \in \HS[\alpha]{\frac{\alpha}{4}}$ and coefficients in $\HS[\alpha]{\frac{\alpha}{4}}$.
\autoref{thm:linearIsoSchauderM} yields the existence of a unique solution $\tilde u \in \HS[4+\alpha]{\frac{4+\alpha}{4}} (\Sph^1 \times [0,\varepsilon])$. Thus $u = \tilde u$ and $u$ has the desired smoothness.
\end{proof}

One can apply the same bootstrapping argument as at the end of the proof of \autoref{thm:nonlinear_schauder} to show that the solution is smoother if $u_0$ has more regularity.
\begin{corol}
\label{corol:smoothness_soln}
  \label{corol:more_regularity_of_soln}If the initial value from \autoref{thm:nonlinear_schauder} satisfies $u_0 \in \CalC^{4+m,\alpha}(\Sph^1)$, then the solution $u$ also satisfies $u \in \HS[4+m+\alpha]{\frac{4+m+\alpha}{4}}(\Sph^1 \times [0,\varepsilon])$. In particular we see that $u \in \CalC^\infty(\Sph^1 \times [0,\varepsilon])$ if  $u_0 \in \CalC^\infty(\Sph^1)$.
\end{corol}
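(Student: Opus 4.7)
The strategy is a bootstrap argument by induction on $m$, entirely analogous to the final paragraph of the proof of \autoref{thm:nonlinear_schauder}. The base case $m=0$ is the content of \autoref{thm:nonlinear_schauder}. For the inductive step, suppose the statement holds for some fixed $m$ and let $u_0 \in \CalC^{4+m+1,\alpha}(\Sph^1)$. Since $\CalC^{4+m+1,\alpha} \subset \CalC^{4+m,\alpha}$, the induction hypothesis (applied to $u_0$ regarded as data of regularity $\CalC^{4+m,\alpha}$) provides a solution $u \in \HS[4+m+\alpha]{\frac{4+m+\alpha}{4}}(\Sph^1 \times [0,\varepsilon])$ to \eqref{eq:schauder_nonlinear}, and we must upgrade it to $\HS[4+m+1+\alpha]{\frac{4+m+1+\alpha}{4}}$.

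The key observation is that once $u$ has regularity $\HS[4+m+\alpha]{\frac{4+m+\alpha}{4}}$, the coefficient and forcing term of the quasilinear equation inherit correspondingly higher regularity. Setting $f = \overline f + u \overline N$, define
\[
  \tilde a_4(x,t) = \frac{f_2^4}{|\partial_x f|_e^4}, \qquad \tilde f(x,t) = P\bigl(x,t,u,\partial_x u, \partial_x^2 u, \partial_x^3 u, |\partial_x f|_e^{-1}\bigr),
\]
exactly as at the end of the proof of \autoref{thm:nonlinear_schauder}. Because $\tilde a_4$ is a smooth function of $u$ and $\partial_x u$ (and the constant data $\overline f, \overline N$), and since $\partial_x u \in \HS[3+m+\alpha]{\frac{3+m+\alpha}{4}}$, the standard multiplication/composition properties of parabolic H\"older spaces give $\tilde a_4 \in \HS[3+m+\alpha]{\frac{3+m+\alpha}{4}}$. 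Analogously, $\tilde f$ is a smooth (polynomial) function of arguments whose lowest regularity ingredient is $\partial_x^3 u \in \HS[1+m+\alpha]{\frac{1+m+\alpha}{4}}$, so $\tilde f \in \HS[1+m+\alpha]{\frac{1+m+\alpha}{4}}$. In particular, both lie in $\HS[s]{\frac{s}{4}}$ for $s = 1+m+\alpha$, and $s \notin \N$ since $\alpha \in (0,1)$.

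The function $u$ therefore solves the linear problem \eqref{eq:schauder_nonlinear_bootstrapping} with coefficients in $\HS[s]{\frac{s}{4}}$, right-hand side in $\HS[s]{\frac{s}{4}}$, and initial datum $u_0 \in \CalC^{4+m+1,\alpha} \subset \CalC^{4+s}(\Sph^1)$. Parabolicity of $\partial_t - \tilde a_4 \partial_x^4$ is already ensured by the uniform bound $\delta \le -a_4 \le \frac{1}{\delta}$ noted before \eqref{eq:schauder_nonlinear}. Invoking \autoref{thm:linearIsoSchauderM} with this $s$ yields a unique solution $\hat u \in \HS[4+s]{\frac{4+s}{4}} = \HS[4+m+1+\alpha]{\frac{4+m+1+\alpha}{4}}$ to the linear problem. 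Since $u$ itself solves the same linear problem and both $u$ and $\hat u$ belong to the lower regularity class in which uniqueness of \autoref{thm:linearIsoSchauderM} applies, we conclude $u = \hat u$, completing the induction.

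For the final assertion, if $u_0 \in \CalC^\infty(\Sph^1)$, then $u_0 \in \CalC^{4+m,\alpha}$ for every $m \in \N_0$, so the induction gives $u \in \HS[4+m+\alpha]{\frac{4+m+\alpha}{4}}(\Sph^1 \times [0,\varepsilon])$ for every $m$, whence $u \in \CalC^\infty(\Sph^1 \times [0,\varepsilon])$. The only nontrivial point in the whole argument is the bookkeeping for the coefficient and forcing regularity in parabolic H\"older spaces; these follow from the standard multiplicative and Nemytskii composition properties of $\HS[s]{\frac{s}{4}}$ and are used in the same spirit as in the bootstrap step at the close of \autoref{thm:nonlinear_schauder}.
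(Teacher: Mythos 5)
Your proof is correct and takes essentially the same route as the paper, which establishes the corollary by simply remarking that one repeats the bootstrapping argument from the end of the proof of \autoref{thm:nonlinear_schauder}. Your induction on $m$, feeding the improved regularity of $\tilde a_4$ and $\tilde f$ back into \autoref{thm:linearIsoSchauderM} with $s=1+m+\alpha$ and identifying $u$ with the unique higher-regularity solution, is exactly the intended argument spelled out.
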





If we can not apply \autoref{corol:smoothness_soln} we still have the following parabolic smoothing:
\begin{mythm}
 The \label{thm:parabolic_smoothing}solution $u \in \HS[4+\alpha]{\frac{4+\alpha}{4}}(\Sph^1\times [0,\varepsilon])$ of \autoref{eq:schauder_nonlinear} from \autoref{thm:nonlinear_schauder} satisfies
  \[
    u \in \CalC^\infty(\Sph^1 \times [\delta, \varepsilon]) \text{   for all } 0 < \delta < \varepsilon.
  \]
\end{mythm}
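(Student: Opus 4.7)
The plan is to bootstrap regularity by induction on $m \in \N_0$, proving that for every $\delta \in (0,\varepsilon)$ the solution satisfies $u \in \HS[4+m+\alpha]{\frac{4+m+\alpha}{4}}(\Sph^1 \times [\delta,\varepsilon])$. The base case $m = 0$ is exactly the conclusion of \autoref{thm:nonlinear_schauder} (on the full interval $[0,\varepsilon]$), so the real work is the induction step.

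For the induction step, I would differentiate the equation $\partial_t u = F(\cdot,u,\partial_x u,\ldots,\partial_x^4 u,|\partial_xf|_e^{-1})$ once in the spatial variable and observe that $v = \partial_x u$ formally satisfies a linear parabolic equation
\begin{equation*}
\partial_t v - \sum_{k=1}^{4} F_{q_k}(\cdot,u,\ldots)\, \partial_x^k v = G,
\end{equation*}
whose coefficients and inhomogeneity $G$ belong to $\HS[m+\alpha]{\frac{m+\alpha}{4}}$ on $\Sph^1 \times [\delta',\varepsilon]$ for any $\delta' < \delta$ by the induction hypothesis, and whose principal coefficient $-f_2^4/|\partial_xf|_e^4$ is uniformly strictly negative, as already established in the proof of \autoref{thm:nonlinear_schauder}. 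Since there is no control on the trace $v(\cdot,0) = \partial_x u_0$, the key device is a smooth time cut-off $\eta$ with $\eta \equiv 0$ near $\delta'$ and $\eta \equiv 1$ on $[\delta,\varepsilon]$. As the $F_{q_k}$ do not involve $\partial_t$, the product $w = \eta v$ solves the same linear parabolic equation with right-hand side $\eta G + \dot\eta\, v \in \HS[m+\alpha]{\frac{m+\alpha}{4}}$ and vanishing trace at $t = \delta'$. \autoref{thm:linearIsoSchauderM} applied on $[\delta',\varepsilon]$ then yields $w \in \HS[4+m+\alpha]{\frac{4+m+\alpha}{4}}$ and hence $\partial_x u \in \HS[4+m+\alpha]{\frac{4+m+\alpha}{4}}(\Sph^1 \times [\delta,\varepsilon])$.

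Combined with the induction hypothesis this delivers all mixed derivatives $\partial_x^{\beta}\partial_t^{\mu} u$ with $\beta \geq 1$ and $\beta + 4\mu \leq 5+m$ in the required H\"older class. The remaining pure time derivatives follow from the equation $\partial_t u = \mathbf{F}[u]$ itself: by smoothness of $F$ in all its arguments, each higher $\partial_t^{\mu} u$ can be expressed as a polynomial in spatial and lower-order time derivatives of $u$ whose regularity is already known. This gives $u \in \HS[5+m+\alpha]{\frac{5+m+\alpha}{4}}(\Sph^1 \times [\delta,\varepsilon])$ and closes the induction; since $m$ and $\delta \in (0,\varepsilon)$ are arbitrary, $u$ is smooth on $\Sph^1 \times [\delta,\varepsilon]$ for every $\delta > 0$.

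The main obstacle I anticipate is rigorously justifying the spatial differentiation at the first step, since a priori only $u \in \HS[4+\alpha]{\frac{4+\alpha}{4}}$ is available and $\partial_x^5 u$ does not yet exist classically. I would overcome this by working with the difference quotient $u^h(x,t) = h^{-1}\bigl(u(x+h,t) - u(x,t)\bigr)$, which satisfies a linear parabolic equation of the same principal form with coefficients built from the mean-value form of $\mathbf{F}$ along the segment between $u(\cdot,t)$ and $u(\cdot+h,t)$. Applying the cut-off and \autoref{thm:linearIsoSchauderM} to $u^h$ gives a uniform-in-$h$ bound in $\HS[4+\alpha]{\frac{4+\alpha}{4}}$ on $[\delta,\varepsilon]$, and letting $h \to 0$ both produces the claimed regularity of $\partial_x u$ and legitimises the formal derivation of the equation for $v$ used in the induction.
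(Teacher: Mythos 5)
Your argument is correct in outline, but it takes a genuinely different route from the paper. You differentiate the equation in $x$ and bootstrap the regularity of $v=\partial_x u$, which forces you (i) to legitimise the differentiation via difference quotients and (ii) to recover the pure time derivatives afterwards from the equation. The paper never differentiates: it exploits the special quasilinear structure $\dot u=\tilde\alpha(\cdot,u,\partial_x u)\,\partial_x^4u+P(\cdot,u,\ldots,\partial_x^3u,|\partial_xf|_e^{-1})$, in which the principal coefficient depends only on $u,\partial_xu$ and the remainder only on derivatives up to order three. Hence, once $u\in\HS[4+m+\alpha]{\frac{4+m+\alpha}{4}}$ is known on a time interval, the \emph{same} undifferentiated equation, after multiplying $u$ by the time cut-off, is a linear parabolic problem with principal coefficient in $\HS[3+m+\alpha]{\frac{3+m+\alpha}{4}}$, right-hand side in $\HS[1+m+\alpha]{\frac{1+m+\alpha}{4}}$ and vanishing initial trace; \autoref{thm:linearIsoSchauderM} with $s=1+m+\alpha$ then returns $u\in\HS[5+m+\alpha]{\frac{5+m+\alpha}{4}}$ on the smaller interval in one stroke, mixed and pure time derivatives included. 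Your scheme is the generic one and would also handle a fully nonlinear $F$; its price here is the difference-quotient step, where you should make explicit that the Schauder constant for the frozen equations satisfied by $u^h$ can be chosen uniformly in $h$ (it depends only on the $\HS[\alpha]{\frac{\alpha}{4}}$-bounds of the averaged coefficients and the parabolicity constant, both uniform for small $h$), plus the extra bookkeeping for $\partial_t^\mu u$ and the associated time-H\"older seminorms. Both arguments rely on the same cut-off device to kill the initial trace and satisfy the compatibility conditions, and on the same nested time intervals $[\delta',\varepsilon]\supset[\delta,\varepsilon]$.
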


 The proof is given in paragraph \ref{proof:ProofOfSmoothing} in the appendix.

\section{Long time existence}
\label{sec:LTE}
By the main result in the previous section the solution to \eqref{eq:stat} exists on at least a small interval of time and we extend it to its maximal existence interval $[0,T_{\text{max}})$. In order to prove our main result Theorem \ref{mainThm} we show first that $T_{\text{max}}= \infty$ by proving that, if this was not the case, the derivatives of the curvature are uniformly bounded on $[0,T_{\text{max}})$ using interpolation inequalities.

\subsection{Interpolation inequalities}

As we have seen in Lemma \ref{lem:dercurvature} the evolution equations of the derivatives of the curvature are quite complicated and with several terms. With the notation $P^{a,c}_b(\Kapp)$ we keep track of the order of these terms that we wish now to control via interpolation inequalities.  

The norms we use are the following. For a function $h: (\mathbb{S}^1, \diff s) \to \R$ and $k \in \N$, $p \in [1,\infty)$
$$ \| h\|^p_{L^p(\mathbb{S}^1)} = \int_{\mathbb{S}^1} |h(s)|^p \diff s , \; \| h\|_{L^\infty(\mathbb{S}^1)} = \esssup |h(x)| \mbox{ and } 
\| h\|^2_{W^{k,2}(\mathbb{S}^1)} = \sum_{j=0}^k \| \partial_s^j h\|^2_{L^2(\mathbb{S}^1)} \, , $$
while for a vector field $\Phi: (\mathbb{S}^1, \diff s) \to T \Hyp^2$
\begin{align*}
& \| \Phi\|^p_{L^p(\mathbb{S}^1)} = \int_{\mathbb{S}^1} |\Phi(s)|_g^p \diff s = \| |\Phi|_g \|^p_{L^p(\mathbb{S}^1)}, \; \| \Phi \|_{L^\infty(\mathbb{S}^1)} = \esssup |\Phi(x)|_g \\
&  \mbox{ and } 
\| \Phi \|^2_{W^{k,2}(\mathbb{S}^1)} = \sum_{j=0}^k \| (\nabla_{\partial s}^\bot)^j \Phi\|^2_{L^2(\mathbb{S}^1)} \, .
\end{align*}
In the case $M=\R^n$ it is convenient to work with scale invariant norms. There is no need to modify the norms here since the metric in $\Hyp^2$ is already scaling invariant. 

First an interpolation inequality for the derivatives of the curvature.
\begin{prop}\label{prop:propsonntag}
Let $f:\mathbb{S}^1 \to \Hyp^2 $ be a smooth immersion such that $\int_{\mathbb{S}^1} \diff s=L>0$ with $\diff s = |\partial_x f|_g \diff x$. Let $\Kapp$ be the curvature of $f$. Then for any $k \in \N$, $0\leq i<k$ and $p \in [2,\infty]$ there exists a constant $c$ depending only on $i$, $k$, $p$ and $1/L$  such that
$$ \| (\nabla_{\partial_s}^\bot)^i \Kapp \|_{L^p(\mathbb{S}^1)} \leq c \| \Kapp  \|_{W^{k,2}(\mathbb{S}^1)}^{\alpha}  \|\Kapp\|_{L^2(\mathbb{S}^1)}^{1-\alpha} \,  , $$
with $\alpha=(i+1/2-1/p)/k$ (and $\alpha=(i+1/2)/k$ if $p=\infty$).
\end{prop}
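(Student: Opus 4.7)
The plan is to reduce the inequality to the standard Gagliardo--Nirenberg inequality for scalar periodic functions on $\mathbb{S}^1$ parametrised by arc-length. The key observation is that, since $\mathbb{S}^1$ has no boundary, integration by parts for the normal derivative $\nabla_{\partial_s}^\bot$ works exactly as in the Euclidean case: for any smooth normal vector fields $N,M$ along $f$,
\[
\partial_s |N|_g^2 = 2\langle N, \nabla_{\partial_s}^\bot N\rangle_g, \qquad \int_{\mathbb{S}^1} \langle N, \nabla_{\partial_s}^\bot M\rangle_g \diff s = - \int_{\mathbb{S}^1} \langle \nabla_{\partial_s}^\bot N, M\rangle_g \diff s,
\]
as a direct consequence of \eqref{eq:NormalDerOfNormal} and \eqref{eq:PartialIntegration}. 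So the hyperbolic geometry does not really enter the argument: only arc-length, $\diff s$, and the correct normal derivative do, which are the natural objects (c.f.\ \autoref{rem:scaling}).

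First I would treat the base case $k=1$, $i=0$, $p=\infty$ for an arbitrary normal vector field $N$. Since $|N|_g^2$ attains its minimum on $\mathbb{S}^1$, there is $s_0$ with $|N(s_0)|_g^2 \leq L^{-1}\|N\|_{L^2}^2$, and from the fundamental theorem of calculus and Cauchy--Schwarz,
\[
\|N\|_{L^\infty}^2 \leq |N(s_0)|_g^2 + \int_{\mathbb{S}^1} \bigl|\partial_s|N|_g^2\bigr|\diff s \leq L^{-1}\|N\|_{L^2}^2 + 2\|N\|_{L^2}\|\nabla_{\partial_s}^\bot N\|_{L^2}.
\]
The interpolated $L^p$-bound for $p\in[2,\infty)$ follows via H\"older: $\|N\|_{L^p}^p \leq \|N\|_{L^\infty}^{p-2}\|N\|_{L^2}^2$.

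Next, I would set up the pure $L^2$ interpolation for higher derivatives. A single integration by parts gives the convexity estimate
\[
\|(\nabla_{\partial_s}^\bot)^j N\|_{L^2}^2 \leq \|(\nabla_{\partial_s}^\bot)^{j-1} N\|_{L^2}\,\|(\nabla_{\partial_s}^\bot)^{j+1} N\|_{L^2},
\]
and iterating this for $N=\Kapp$ yields, for all $0 \leq i \leq k$,
\[
\|(\nabla_{\partial_s}^\bot)^i \Kapp\|_{L^2} \leq \|\Kapp\|_{L^2}^{1-i/k}\,\|(\nabla_{\partial_s}^\bot)^k \Kapp\|_{L^2}^{i/k},
\]
which is exactly the case $p=2$ with $\alpha = i/k$.

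Finally, I would combine the two. Applying the base case to $N = (\nabla_{\partial_s}^\bot)^i \Kapp$, then using the $L^2$ interpolation above to bound both $\|(\nabla_{\partial_s}^\bot)^i \Kapp\|_{L^2}$ and $\|(\nabla_{\partial_s}^\bot)^{i+1}\Kapp\|_{L^2}$ in terms of $\|\Kapp\|_{L^2}$ and $\|(\nabla_{\partial_s}^\bot)^k \Kapp\|_{L^2}$, and finally interpolating between $L^\infty$ and $L^2$ via H\"older for $p\in(2,\infty)$, produces the required bound with Hölder exponent $\alpha = (i+1/2-1/p)/k$ and a constant depending only on $i,k,p$ and $L^{-1}$, since $L$ enters only through the estimate of the minimum.

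The only genuine work is bookkeeping the exponents so that the powers match the claimed $\alpha = (i+1/2-1/p)/k$; analytically there are no new difficulties beyond the scalar Gagliardo--Nirenberg inequalities on the circle, because the hyperbolic structure has been entirely absorbed into the use of $\nabla_{\partial_s}^\bot$ and $\diff s$. I expect this exponent accounting (rather than any estimate) to be the only mildly delicate step.
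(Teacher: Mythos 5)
Your proposal is correct, and the exponent bookkeeping does close: squaring your $L^\infty$ bound for $N=(\nabla_{\partial_s}^\bot)^i\Kapp$, inserting $a_i\leq a_0^{1-i/k}a_k^{i/k}$ and $a_{i+1}\leq a_0^{1-(i+1)/k}a_k^{(i+1)/k}$, and absorbing the extra $a_0^{1/k}$ from the $L^{-1}a_i^2$ term into $\|\Kapp\|_{W^{k,2}}^{1/k}$ gives exactly $\alpha=(i+\tfrac12-\tfrac1p)/k$ after the final H\"older step. The overall skeleton (a first-order Gagliardo--Nirenberg inequality for normal fields, an $L^2$ interpolation of intermediate derivatives by integration by parts, then combination) is the same as in the paper (Proposition \ref{prop:inter1}, Lemma \ref{lem:lemC5}, Lemma \ref{lem:lemC6}), but your proof of the base case is genuinely different and more elementary: the paper localises to the four isometric charts of \eqref{eq:charts} with a partition of unity and invokes the convolution/Young's-inequality lemma of Adams on an interval (Lemmas \ref{lem:interlem1} and \ref{lem:interlem2}), tracking how the cut-off derivatives contribute the $1/L$-dependence, whereas you exploit the absence of boundary directly via the minimum point of $|N|_g^2$ and the fundamental theorem of calculus, with $1/L$ entering only through $|N(s_0)|_g^2\leq L^{-1}\|N\|_{L^2}^2$. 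Your route avoids all chart and partition-of-unity machinery and yields the slightly sharper homogeneous form $a_i\leq a_0^{1-i/k}a_k^{i/k}$ in place of the paper's version of Lemma \ref{lem:lemC5} with the full $W^{k,2}$ norm; the paper's route has the advantage of quoting a standard reference and of generalising verbatim to open curves, where the minimum-point argument would need a boundary term. One small point worth recording if you write this up: in the degenerate case where some $a_j=0$ the chain $a_{j+1}^2\leq a_ja_{j+2}$ forces $a_m=0$ for all $m\geq j$ and, descending, for all $m\geq 1$, so the log-convexity interpolation holds without exception.
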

The proof is given in Appendix \ref{sec:Inter}. A consequence of this result is that the $W^{k,2}$-norm of the curvature is bounded by the $L^2$-norm of the curvature and by the $L^2$-norm of the highest derivative.

\begin{corol}\label{cor:corsonntag}
Consider the same assumptions of Proposition \ref{prop:propsonntag}. Then for any $k \in \N$ there exists a constant $c$ depending only on $k$ and $1/L$  such that
$$ \| \Kapp \|_{W^{k,2}(\mathbb{S}^1)} \leq c ( \| (\nabla_{\partial_s}^\bot)^k\Kapp  \|_{L^{2}(\mathbb{S}^1)}+ \|\Kapp\|_{L^2(\mathbb{S}^1)}) \,  . $$
\end{corol}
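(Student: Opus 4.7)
The plan is to derive the corollary directly from Proposition \ref{prop:propsonntag} by interpolating each intermediate derivative norm between the top-order derivative and the $L^2$-norm of $\Kapp$, and then absorbing the resulting small multiple of $\|\Kapp\|_{W^{k,2}}$ on the left-hand side.

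First I would recall that, by the definition of the $W^{k,2}$-norm in the excerpt, one has the trivial equivalence
\[
 \|\Kapp\|_{W^{k,2}(\mathbb{S}^1)} \leq \sum_{j=0}^{k} \|(\nabla_{\partial_s}^\bot)^j \Kapp\|_{L^2(\mathbb{S}^1)} \leq \sqrt{k+1}\, \|\Kapp\|_{W^{k,2}(\mathbb{S}^1)},
\]
so it suffices to estimate each $\|(\nabla_{\partial_s}^\bot)^i \Kapp\|_{L^2}$ for $0 \leq i < k$. Applying Proposition \ref{prop:propsonntag} with $p = 2$ yields, for a constant $c_i$ depending only on $i,k$ and $1/L$,
\[
 \|(\nabla_{\partial_s}^\bot)^i \Kapp\|_{L^2(\mathbb{S}^1)} \leq c_i \|\Kapp\|_{W^{k,2}(\mathbb{S}^1)}^{\alpha_i} \|\Kapp\|_{L^2(\mathbb{S}^1)}^{1-\alpha_i}, \qquad \alpha_i = \frac{i}{k} \in [0,1).
\]

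Next I would use Young's inequality in the form $a^{\alpha_i} b^{1-\alpha_i} \leq \eps\, a + C(\eps,\alpha_i)\, b$, valid for $\alpha_i \in [0,1)$, which gives for every $\eps > 0$
\[
 \|(\nabla_{\partial_s}^\bot)^i \Kapp\|_{L^2(\mathbb{S}^1)} \leq \eps\, \|\Kapp\|_{W^{k,2}(\mathbb{S}^1)} + C(\eps,i,k,1/L)\, \|\Kapp\|_{L^2(\mathbb{S}^1)}.
\]
Summing over $i = 0,\dots,k-1$ and adding the top-order term $\|(\nabla_{\partial_s}^\bot)^k\Kapp\|_{L^2}$ yields
\[
 \|\Kapp\|_{W^{k,2}(\mathbb{S}^1)} \leq k\eps\, \|\Kapp\|_{W^{k,2}(\mathbb{S}^1)} + C'(\eps,k,1/L)\, \|\Kapp\|_{L^2(\mathbb{S}^1)} + \|(\nabla_{\partial_s}^\bot)^k \Kapp\|_{L^2(\mathbb{S}^1)}.
\]
Choosing $\eps = 1/(2k)$ allows one to absorb the $W^{k,2}$-term on the right into the left-hand side, leading to the claimed estimate with a constant $c$ depending only on $k$ and $1/L$.

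The argument is entirely routine given Proposition \ref{prop:propsonntag}; there is no genuine obstacle. The only point to be mindful of is that the constants $c_i$ from the interpolation inequality depend on $1/L$ but not on the curve itself, so the final constant after applying Young's inequality and summing still depends only on $k$ and $1/L$, as stated.
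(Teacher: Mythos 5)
Your proof is correct and follows essentially the same route as the paper, which simply invokes Proposition \ref{prop:propsonntag} together with an induction (deferring details to \cite[Cor.4.2]{AnnaPaola1}); your version replaces the induction by a direct interpolation of each $\|(\nabla_{\partial_s}^\bot)^i\Kapp\|_{L^2}$, $0\leq i<k$, followed by Young's inequality and absorption, which is a complete and equivalent implementation of the same idea.
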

\begin{proof}
The estimate for $k=1$ is satisfied with $c=1$ by definition of the norm. The general case is then proven by induction using Proposition \ref{prop:propsonntag}. The details are given in \cite[Cor.4.2]{AnnaPaola1}.
\end{proof}

We are now ready to state the interpolation inequality in the form needed in the proof of the long time existence. More precisely, we see which estimate we can get for the terms $P^{a,c}_b(\Kapp)$ (defined just before Lemma \ref{lem:dercurvature}) in terms of $\|\Kapp  \|_{L^{2}(\mathbb{S}^1)}$ and $\|(\nabla_{\partial_s}^\bot)^k\Kapp  \|_{L^{2}(\mathbb{S}^1)}$ for some $k$. Here with abuse of notation we write $|P^{a,c}_b(\Kapp)|_g$ both when $b$ is even and odd. For $b$ even, $|P^{a,c}_b(\Kapp)|_g=|P^{a,c}_b(\Kapp)|$. 

\begin{prop}\label{prop:inter}
Let $f:\mathbb{S}^1 \to \Hyp^2 $ be a smooth immersion such that $\int_{\mathbb{S}^1} \diff s=L>0$ with $\diff s = |\partial_x f|_g \diff x$. Then for any $k \in\N$, $a,c \in\N_0$, $b \in \N$, $b\geq 2$ such that $ c\leq k-1$ and $a+\frac{b}{2} -1<2k,$ 
there exists a constant $C$ depending on $k$, $a$, $b$ and $\frac{1}{L}$ such that
$$\int_{\mathbb{S}^1} |P^{a,c}_b(\Kapp)|_g\diff s \leq C \|\Kapp  \|_{W^{k,2}(\mathbb{S}^1)}^{\gamma}\|\Kapp\|^{b-\gamma}_{L^2(\mathbb{S}^1)}\, ,$$
with $\gamma=(a+b/2-1)/k$. Moreover, for any $\varepsilon \in(0,1)$
$$\int_{\mathbb{S}^1} |P^{a,c}_b(\Kapp)|_g\diff s \leq \varepsilon \| (\nabla_{\partial_s}^\bot)^k\Kapp  \|_{L^{2}(\mathbb{S}^1)}^2 + \tilde{c} \varepsilon^{-\frac{\gamma}{2-\gamma}} \|\Kapp  \|_{L^{2}(\mathbb{S}^1)}^{2\frac{b-\gamma}{2-\gamma}}+\tilde{c} \|\Kapp\|^{b}_{L^2(\mathbb{S}^1)}\,,$$
with $\tilde{c}$ depending on $k$, $a$, $b$ and $\frac{1}{L}$.
\end{prop}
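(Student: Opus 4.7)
The plan is to apply the Gagliardo--Nirenberg--type interpolation estimate of \autoref{prop:propsonntag} factor-by-factor to each summand of $P_b^{a,c}(\Kapp)$, after first splitting the product via Hölder. Recall that by definition $P_b^{a,c}(\Kapp)$ is a finite linear combination of terms of the form $(\nabla_{\partial_s}^\bot)^{i_1} \Kapp * \cdots * (\nabla_{\partial_s}^\bot)^{i_b}\Kapp$ with $i_1+\cdots+i_b = a$ and $\max_j i_j \leq c$. Pointwise Cauchy--Schwarz gives
\[
 |(\nabla_{\partial_s}^\bot)^{i_1}\Kapp * \cdots * (\nabla_{\partial_s}^\bot)^{i_b}\Kapp|_g \leq \prod_{j=1}^b |(\nabla_{\partial_s}^\bot)^{i_j}\Kapp|_g,
\]
valid uniformly in the parity of $b$, so it suffices to estimate $\int_{\Sph^1} \prod_j |(\nabla_{\partial_s}^\bot)^{i_j}\Kapp|_g \diff s$.

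First, I would apply Hölder's inequality with equal exponents $p_j = b$ for each factor; since $b\geq 2$ we have $p_j \in [2,\infty)$ and $\sum_j p_j^{-1} = 1$, so
\[
 \int_{\Sph^1} \prod_{j=1}^b |(\nabla_{\partial_s}^\bot)^{i_j}\Kapp|_g \diff s \leq \prod_{j=1}^b \|(\nabla_{\partial_s}^\bot)^{i_j}\Kapp\|_{L^b(\Sph^1)}.
\]
The hypothesis $c \leq k-1$ ensures $i_j \leq c < k$, so \autoref{prop:propsonntag} applies to each factor and yields
\[
 \|(\nabla_{\partial_s}^\bot)^{i_j}\Kapp\|_{L^b(\Sph^1)} \leq c\,\|\Kapp\|_{W^{k,2}(\Sph^1)}^{\alpha_j}\|\Kapp\|_{L^2(\Sph^1)}^{1-\alpha_j}, \qquad \alpha_j = \frac{i_j+\tfrac12-\tfrac1b}{k}.
\]
Multiplying these bounds and using $\sum_j i_j = a$ gives the total exponent on the $W^{k,2}$-factor equal to $\sum_j \alpha_j = (a + b/2 - 1)/k = \gamma$, and the exponent on $\|\Kapp\|_{L^2}$ equal to $b - \gamma$. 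This establishes the first inequality of the proposition.

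For the second inequality, \autoref{cor:corsonntag} gives $\|\Kapp\|_{W^{k,2}}^{\gamma} \leq C\bigl(\|(\nabla_{\partial_s}^\bot)^k\Kapp\|_{L^2}^{\gamma} + \|\Kapp\|_{L^2}^{\gamma}\bigr)$, so after substitution the right-hand side splits into a term $C\|(\nabla_{\partial_s}^\bot)^k\Kapp\|_{L^2}^{\gamma}\|\Kapp\|_{L^2}^{b-\gamma}$ and a harmless term $C\|\Kapp\|_{L^2}^{b}$. The assumption $a+b/2-1 < 2k$ is precisely $\gamma < 2$, which is exactly what Young's inequality with conjugate exponents $2/\gamma$ and $2/(2-\gamma)$ requires to absorb the first term as
\[
 C\|(\nabla_{\partial_s}^\bot)^k\Kapp\|_{L^2}^{\gamma}\|\Kapp\|_{L^2}^{b-\gamma} \leq \varepsilon \|(\nabla_{\partial_s}^\bot)^k\Kapp\|_{L^2}^{2} + \tilde c\,\varepsilon^{-\gamma/(2-\gamma)}\|\Kapp\|_{L^2}^{2(b-\gamma)/(2-\gamma)}.
\]

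The proof is essentially bookkeeping; the only thing to double-check carefully is that the exponents from \autoref{prop:propsonntag} add up as claimed and that the constraint $\gamma < 2$ matches the hypothesis $a+b/2-1 < 2k$. There is no genuine analytic obstacle: the role of $c \leq k-1$ is merely to guarantee that each $i_j$ lies in the admissible range for the interpolation inequality, while the role of $b \geq 2$ is to make the Hölder exponent $p_j = b$ lie in $[2,\infty]$.
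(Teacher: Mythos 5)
Your argument is correct and follows the paper's proof essentially verbatim: pointwise Cauchy--Schwarz, H\"older with equal exponents $b$, Proposition \ref{prop:propsonntag} factor-by-factor so that $\sum_j \alpha_j = \gamma$, then Corollary \ref{cor:corsonntag} and Young's inequality with exponents $2/\gamma$ and $2/(2-\gamma)$ for the second estimate. The only cosmetic difference is that the paper isolates the degenerate case $\gamma=0$ (i.e.\ $a=0$, $b=2$) at the start, which in your version is covered automatically since the interpolation exponent then vanishes.
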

\begin{proof}
We start by proving the first inequality. If $\gamma=0$, i.e. $a=0$ and $b=2$ then
$$\int_{\mathbb{S}^1} |P^{0,c}_2(\Kapp)|_g\diff s = \|\Kapp\|^{2}_{L^2(\mathbb{S}^1)} ,$$
and the estimate is then true taking any $C \geq 1$. In the general case by the Cauchy-Schwarz inequality (both for $b$ even and odd)
$$ |P^{a,c}_b(\Kapp)|_g \leq |(\nabla_{\partial_s}^\bot)^{i_1}\Kapp|_g |(\nabla_{\partial_s}^\bot)^{i_2}(\Kapp)|_g \dots |(\nabla_{\partial_s}^\bot)^{i_b}(\Kapp)|_g \,,$$
with $i_j\leq c$ and $\sum_{j=1}^b i_j=a$. So with H\"older's inequality and \autoref{prop:propsonntag}
\[
\int_{\mathbb{S}^1} |P^{a,c}_b(\Kapp)|_g\diff s 
\leq  \prod_{j=1}^b \| (\nabla_{\partial_s}^\bot)^{i_j}\Kapp  \|_{L^{b}(\mathbb{S}^1)} 
\leq  \prod_{j=1}^b  c_j \| \Kapp  \|_{W^{k,2}(\mathbb{S}^1)}^{\alpha_j}  \|\Kapp\|_{L^2(\mathbb{S}^1)}^{1-\alpha_j}
\]
with $\alpha_j= (i_j+1/2-1/b)/k$. Since $\sum_{j=1}^b \alpha_j=\gamma$ the first estimate follows directly.

For the second estimate we bound $\| \Kapp  \|_{W^{k,2}(\mathbb{S}^1)}$ using Corollary \ref{cor:corsonntag} obtaining
$$\int_{\mathbb{S}^1} |P^{a,c}_b(\Kapp)|_g\diff s \leq \tilde{c}_1 \|(\nabla_{\partial_s}^\bot)^{k} \Kapp \|_{L^{2}(\mathbb{S}^1)}^{\gamma} \|\Kapp\|^{b-\gamma}_{L^2(\mathbb{S}^1)}+ \tilde{c}_1 \|\Kapp\|^{b}_{L^2(\mathbb{S}^1)}\, ,$$
and then use that 
$$ab \leq\varepsilon a^p + \frac{1}{(\varepsilon p)^{q/p}}b^q \frac{1}{q} \mbox{ with }p=\frac{2}{\gamma},\ q=\frac{2}{2-\gamma}  \mbox{ and } a=\|(\nabla_{\partial_s}^\bot)^{k} \Kapp \|_{L^{2}(\mathbb{S}^1)}^{\gamma}\,.$$\qedhere
\end{proof}

\subsection{Proof of Theorem \ref{mainThm}}

\begin{proof}[Proof of Theorem \ref{mainThm}] \textbf{Short time Existence} By Theorem \ref{thm:STE} we know that if the initial datum satisfies $f_0 \in C^{5,\alpha}$ then there exists a solution to \eqref{eq:stat} in the H\"older space $\HS[1+\alpha]{\frac{1+\alpha}{4}}(\Sph^1\times[0,T])$.  Moreover, the solution is an immersion and is smooth on $[\delta,T]$ for any $\delta>0$. Let us fix $\delta= \frac12 T$ and the constants 
\begin{equation}\label{eq:Am}
A_m:= \sum_{i=0}^m \int_{\mathbb{S}^1} | (\nabla_{\partial_s}^\bot)^i\Kapp(\cdot, \delta)|^2_g \diff s < \infty \, .
\end{equation}

\textbf{Global Existence} Let $[0,T_{\text{max}})$ be the maximal existence interval for the solution and let us assume that $T_{\text{max}}< \infty$. Being \eqref{eq:stat} an $L^2$-gradient flow for the energy $\E_{\lambda}$, the $L^2$-norm of the curvature is already bounded by $\E_{\lambda}(f_0)$. Hence there exists a constant $C=C(\E_{\lambda}(f_0))$ such that
$$ \| \Kapp\|_{L^2(\mathbb{S}^1)} \leq C \mbox{ for all }t \in [0,T_{\text{max}}) . $$
We prove now that the solution satisfies uniform bounds on $[0,T_{\text{max}})$ and hence can be extended, reaching a contradiction as done in \cite{DKS}. In the following $C$ is a constant that might change from line to line. We will at each step specify on which parameters the constant depends.\medskip

\textit{Step 1.} \underline{Along the flow the length is uniformly bounded from below.} For $t \in [\delta,T_{\text{max}})$ let $\mathcal{L}(f(t))$ denote the total length of $f(t): \Sph^1 \to \Hyp^2$ solution of \eqref{eq:stat}. Then by Fenchel's Theorem in the hyperbolic plane (Theorem \ref{Thm:lengthbelow}) we find for any $t \in [\delta,T_{\text{max}})$ 
$$ 2 \pi \leq \int_{0}^{\mathcal{L}(f(t))} | \Kapp(t,s)|  \diff s \leq \sqrt{\mathcal{L}(f(t))} \sqrt{\mathcal{E}(f(t))} \leq  \sqrt{\mathcal{L}(f(t))} \sqrt{ \mathcal{E}_{\lambda}(f(0))} \, ,$$
that gives a uniform bound from below on the length independent of $t$.\medskip

\textit{Step 2.} \underline{Uniform bounds on $\|(\nabla_{\partial_s}^\bot)^m\Kapp\|_{L^2}$.} By Lemma \ref{lem:dercurvature} (with $S_0=-1$), Lemma \ref{lem:integration} with $N = (\nabla_{\partial_s}^\bot)^m\Kapp$ and
\begin{equation}\label{VV}
V=  -(\nabla_{\partial_s}^\bot )^2 \Kapp+ P_3^{0,0}(\Kapp) +P_1^{0,0}(\Kapp) \, ,
\end{equation} 
(see \eqref{eq:GradAsPolynom}) we find for any $m\geq 1$
\allowdisplaybreaks{\begin{align}\nonumber
  & \frac{\diff}{\diff t} \frac{1}{2} \int_{\mathbb{S}^1} | (\nabla_{\partial_s}^\bot)^m\Kapp|^2_g \diff s + \int_{\Sph^1} | (\nabla_{\partial_s}^\bot)^{m+2}\Kapp |_g^2 \diff s 
  \nonumber \\  
	& = \int_{\mathbb{S}^1}  (P_4^{2+2m, 2+m}(\Kapp) + P_2^{2+2m,2+m}(\Kapp) 
	\label{eq:sole1}
	+ P_6^{2m,m}(\Kapp) +P_4^{2m,m}(\Kapp) + P_2^{2m,m}(\Kapp)) \diff s .
 \end{align}}
We estimate now the terms on the right hand side using interpolation inequalities. Since $\mathbb{S}^1$ has no boundary and by Proposition \ref{prop:inter} we find for any $\varepsilon_1 \in (0,1)$ 
$$  \int_{\mathbb{S}^1}  P_4^{2+2m, 2+m}(\Kapp)\diff s =  \int_{\mathbb{S}^1}  P_4^{2+2m, 1+m}(\Kapp)\diff s \leq \varepsilon_1  \int_{\Sph^1} | (\nabla_{\partial_s}^\bot)^{m+2}\Kapp |_g^2 \diff s + C_1 $$
with $C_1= C_1(\varepsilon_1, \E_{\lambda}(f_0), m)$ since $ \frac{2+2m+2-1}{m+2} <2$, and the length is uniformly bounded from below. Similarly 
\begin{align*}
\int_{\mathbb{S}^1} P_2^{2+2m,2+m}(\Kapp)\diff s = \int_{\mathbb{S}^1} P_2^{2+2m,1+m}(\Kapp)\diff s \leq \varepsilon_2  \int_{\Sph^1} | (\nabla_{\partial_s}^\bot)^{m+2}\Kapp |_g^2 \diff s + C_2 \, , 
\end{align*}
with $C_2= C_2(\varepsilon_2,\E_{\lambda}(f_0), m)$. For the other terms (now there is no need of integrating by parts)
\begin{align*}
\int_{\mathbb{S}^1} P_6^{2m,m}(\Kapp) \diff s  & \leq \varepsilon_3  \int_{\Sph^1} | (\nabla_{\partial_s}^\bot)^{m+2}\Kapp |_g^2 \diff s + C_3, \\
\int_{\mathbb{S}^1} (1+\lambda) P_4^{2m,m}(\Kapp)\diff s  & \leq \varepsilon_4  \int_{\Sph^1} | (\nabla_{\partial_s}^\bot)^{m+2}\Kapp |_g^2 \diff s + C_4 \mbox{ and }\\
\int_{\mathbb{S}^1} (1+\lambda) P_2^{2m,m}(\Kapp)\diff s  & \leq \varepsilon_5  \int_{\Sph^1} | (\nabla_{\partial_s}^\bot)^{m+2}\Kapp |_g^2 \diff s + C_5 \, ,
\end{align*}
with $C_{i}= C_{i}(\varepsilon_{i}, \lambda,\E_{\lambda}(f_0), m)$, $i=3,4,5$. Combining these inequalities and choosing $\varepsilon_1=...= \varepsilon_5=1/10$ we find from \eqref{eq:sole1}
\begin{align}\label{eq:sole2}
  & \frac{\diff}{\diff t} \frac{1}{2} \int_{\mathbb{S}^1} | (\nabla_{\partial_s}^\bot)^m\Kapp|^2_g \diff s + \int_{\Sph^1} | (\nabla_{\partial_s}^\bot)^{m+2}\Kapp |_g^2 \diff s \leq \frac12 \int_{\Sph^1} | (\nabla_{\partial_s}^\bot)^{m+2}\Kapp |_g^2 \diff s + C \, ,
	\end{align}
	with $C= C(\lambda, \E_{\lambda}(f_0), m)$. Summing on both sides of the inequality above the term $\frac{1}{2} \int_{\mathbb{S}^1} | (\nabla_{\partial_s}^\bot)^m\Kapp|^2_g \diff s$,\"o and since by Proposition \ref{prop:inter} 
	$$ \frac{1}{2} \int_{\mathbb{S}^1} | (\nabla_{\partial_s}^\bot)^m\Kapp|^2_g \diff s \leq \frac12 \int_{\Sph^1} | (\nabla_{\partial_s}^\bot)^{m+2}\Kapp |_g^2 \diff s + C , $$
	with $C= C(\E_{\lambda}(f_0), m)$, it follows from \eqref{eq:sole2} that
	\begin{align*}
  & \frac{\diff}{\diff t} \frac{1}{2} \int_{\mathbb{S}^1} | (\nabla_{\partial_s}^\bot)^m\Kapp|^2_g \diff s + \frac{1}{2} \int_{\mathbb{S}^1} | (\nabla_{\partial_s}^\bot)^m\Kapp|^2_g \diff s \leq  C \,,
	\end{align*}
	with $C= C(\lambda, \E_{\lambda}(f_0), m)$. The above differential inequality together with \eqref{eq:Am} imply that for any $m \in \N$
	\begin{equation}\label{eq:estKmnorm}
	\int_{\mathbb{S}^1} | (\nabla_{\partial_s}^\bot)^m\Kapp|^2_g \diff s \leq C = C(A_m,\lambda, \E_{\lambda}(f_0), m) < \infty  \mbox{ for all }t \in [\delta, T_{\text{max}}) \, .
	\end{equation}
	Notice that the constant is independent of $t$.\medskip
	
	\textit{Step 3.} \underline{Uniform bounds on $\||\nabla_{\partial_s}^m\Kapp|_{g}\|_{L^{\infty}}$.} Now we control not only the normal component of the derivative but the entire derivative. By Lemma \ref{lem:dercurvnormnicht} it follows that
	\begin{align*}
   \int_{\mathbb{S}^1} | \nabla_{\partial_s}^m\Kapp|^2_g \diff s & \leq 
	C(m) \int_{\mathbb{S}^1} |(\nabla_{\partial_s }^\bot)^m\Kapp |^2_g \diff s
  + C(m) \sum_{b=2,\; b\text{ even}}^{m+1} \int_{\mathbb{S}^1} P_{2b}^{2m+2 -2b, m+1-b}(\Kapp)  \diff s \\
   & +  C(m) \sum_{b=3,\; b\text{ odd}}^{m+1} \int_{\mathbb{S}^1} P_{2b}^{2m+2 -2b, m+1-b}(\Kapp) \diff s.
	\end{align*}
	since $\langle \sum_{i=1}^n a_i, \sum_{i=1}^n a_i \rangle_g \leq 2^{n-1} \sum_{i=1}^n |a_i|_{g}^2$. Using again the interpolation inequality given in Proposition \ref{prop:inter} (with $k=m$) we find we find for each term in the sums for $b$ even or odd
	\begin{align*}
	\int_{\mathbb{S}^1} P_{2b}^{2m+2 -2b, m+1-b}(\Kapp)  \diff s & \leq \int_{\mathbb{S}^1} | (\nabla_{\partial_s}^\bot)^m\Kapp|^2_g  + C, 
	\end{align*}
	with $C= C(\lambda, \E_{\lambda}(f_0), m)$. Combining these estimates with \eqref{eq:estKmnorm} we obtain
	\begin{equation}\label{eq:estKm}
	\int_{\mathbb{S}^1} | \nabla_{\partial_s}^m\Kapp|^2_g \diff s \leq C = C(A_m,\lambda, \E_{\lambda}(f_0), m) < \infty  \mbox{ for all }t \in [\delta, T_{\text{max}}) \, .
	\end{equation}
	Since the length of the curves is uniformly bounded from below from Proposition \ref{prop:propsonntag} and Lemma \ref{lem:dercurvnormnicht} it follows that for any $m \in \N$
	\begin{equation}\label{eq:estKminfty}
	\| |\nabla_{\partial_s}^m\Kapp|_g \|_{\infty} \leq C = C(A_m,\lambda, \E_{\lambda}(f_0), m) < \infty  \mbox{ for all }t \in [\delta, T_{\text{max}}) \, .
	\end{equation}

	\textit{Step 4.} \underline{Uniform bounds on $\||\nabla_{\partial_x}^m\Kapp|_{g}\|_{L^{\infty}}$.} In order to extend the solution to time $t=T_{max}$ control of the norms of $\nabla_{\partial_x}^m\Kapp$ is needed. For this and according to Lemma \ref{lem:controlparsuff} we need a control of the parametrisation, that is of $\gamma:=|\partial_x f|_g$ and its derivatives. We start by deriving estimates from above and below for $\gamma$.
	
	By \eqref{eq:evolutionoflineelement1} we see that $\partial_t \gamma = - \langle \Kapp, V\rangle_g \gamma$ with $V$ as in \eqref{VV}. Due to \eqref{eq:estKminfty} the coefficient $\langle \Kapp, V\rangle_g$ is bounded  in $L^{\infty}$ for all $t \in [\delta, T_{\text{max}})$. Since the solution at time $t=\delta$ is an immersion, there exists a $\mu>0$ such that $0 < \mu  \leq |\partial_x f(\delta)|_g \leq \frac{1}{\mu} < \infty$. Combining this two facts one finds the existence of a constant $C=C(T_{\text{max}})$ such that
	$$0 < \frac{1}{C}  \leq \gamma=|\partial_x f(t)|_g \leq C < \infty \mbox{ for all }t \in [\delta, T_{\text{max}}) \mbox{ on  }\mathbb{S}^1\, .$$
Since the derivatives of $\gamma$ satisfy the ordinary differential equation
$$ \partial_t \partial_x^m \gamma = - \langle \Kapp, V\rangle_g \partial_x^{m}\gamma + \sum_{j=0}^{m-1} c_{m,j} (\partial_x^{m-j} \langle \Kapp, V\rangle_g) \partial_x^{j}\gamma \, , $$
 with constants $c_{m,j}$, one proves with the same arguments and by induction that there exist constants $C=C(m,T_{\text{max}})$ such that
$$| \partial_x^{m} \gamma| \leq C \mbox{ for all }t \in [\delta, T_{\text{max}}) \mbox{ on  }\mathbb{S}^1\, .$$
Hence, from Lemma \ref{lem:controlparsuff} and \eqref{eq:estKminfty} it follows that for any $m \in \N$
	\begin{equation}\label{eq:estKminftydx}
	\| |\nabla_{\partial_x}^m\Kapp|_g \|_{\infty} \leq C = C(A_m,\lambda, \E_{\lambda}(f_0), m, T_{\text{max}}) < \infty  \mbox{ for all }t \in [\delta, T_{\text{max}}) \, .
	\end{equation}
	
	 \underline{\textit{Conclusion.}} Since by the estimates above in finite time the length remains bounded, $(f(t))_{t \in [\delta, T_{\text{max}})}$ remains in a compact subset of $\Hyp^2$. Having  uniform estimates on $f$ and all its derivatives on $[\delta,  T_{\text{max}}) \times \mathbb{S}^1$, we can extend the solution up to time $t=T_{\text{max}}$. Then at time $T_{\text{max}}$ we have a $C^{\infty}$-initial datum $f(T_\infty)$ and we can restart the flow, obtaining a smooth solution in  $[\delta,  T_{\text{max}}+\varepsilon)$ for some $\varepsilon>0$ (by Theorem \ref{thm:STE}), which contradicts our assumption. 
	Hence $T_{\text{max}}=\infty$.\medskip

\textbf{Subconvergence for $\lambda>0$}: Let $(t_k)_{k \in \N}$ be a sequence of times diverging to $+ \infty$ and  $(f(t_k, \cdot))_{k \in \N}$ be parametrised by constant speed with parameter in $[0,1]$, i.e. $|\partial_x f(t_k, x)|_g= L(f(t_k))/(2 \pi)$, $x \in \mathbb{S}^1$. Since $\lambda >0$ and the flow reduces the energy, we see that the length of the curves $(f(t_k, \cdot))_{k \in \N}$ is uniformly bounded. Let $L_0$ be the supremum of those lengths. By the estimates  obtained in the first part of the proof $\||  \nabla_{s_{f}}^m \Kapp_{f}(t_k)|_g\|_{L^{\infty}}$ are uniformly bounded  (\eqref{eq:estKminfty}).

Let now take vectors $((p_k,0)^t)_{k \in \N} \in\R^2$ such that
$$ \{ f(t_k, x)- (p_k,0)^t: \, x\in \mathbb{S}^1\} \cap \{ (0, y)^t: \, y>0\}\ne \emptyset\,, $$
and $(\alpha_k)_{k \in\N}$ be a sequence of positive numbers such that the rescaled curves 
$$ \hat{f}(t_k,\cdot) = \alpha_k( f(t_k, \cdot)- (p_k,0)^t) $$
go through the point $Q=(0,2 L_0)^t$. By Remark \ref{rem:scaling} and \eqref{eq:estKminfty} we see that $ |\partial_x \hat{f}(t_k)|_g $ and $ \||  \nabla_{s_{f}}^m \Kapp_{\hat{f}}(t_k)|_g\|_{L^{\infty}}$
are uniformly bounded. By construction and Remark \ref{rem:stern} we have also achieved that there exist $\delta>0$ and $M>0$ such that
$$ \hat{f}_2(t_k,x) \geq \delta>0 \mbox{ and } \| \hat{f}(t_k,x)\|_{\text{euc}} \leq M<\infty \quad \forall k \in\N, \forall x \in \mathbb{S}^1\,. $$
Hence $(\hat{f}(t_k))_{k \in \N}$ are uniformly bounded in $W^{m,2}(\mathbb{S}^1,g)$, for all $m \in \N$, and the weight $g$ is uniformly bounded since the sequence stays in a compact subset of $\Hyp^2$. It follows that there exists a subsequence $(t_{k_j})_{j \in\N}$ and $\hat{f}$ smooth such that $\hat{f}(t_{k_j}) \to \hat{f}$ in any $W^{m,2}(\mathbb{S}^1,g)$.

We prove now that the limit is a critical point of the elastic energy with the usual argument. Let $u(t)=\| |V|_g\|_{L^2}^2(t)$ with $V= - \nabla_{L^2} \E_{\lambda}$ as in \eqref{VV}. By \eqref{eq:endecrease} $u$ is integrable on $[0,\infty)$. In order to derive that it has zero limit for $t \to \infty$ we show that it is not oscillating. Indeed by \eqref{eq:evolutionoflineelement2}
\[\frac{d}{dt} u(t)= -\int_{\Sph^1} |V|^2\langle \Kapp, V\rangle_g \; \diff s_g +  \int_{\Sph^1}\langle V,\nabla_{\partial_t}^{\perp} V \rangle_g \; \diff s_g \]
with 
\begin{align}\label{eq:sabbath}
\nabla_{\partial_t}^{\perp} V &= \sum_{i=0}^3 \sum_{\substack{j=1\\j \text{ odd}}}^{7-2i} P^{2i,2i}_{j}(\Kapp) ,
\end{align}
see the proof in the appendix page \pageref{sabbath}.
By the uniform bounds in \eqref{eq:estKminfty} it follows that $|\frac{d}{dt} u(t)| \leq C$ and hence that $u(t) \to 0$ for $t \to\infty$. Therefore $\hat{f}$ is a critical point of the elastic energy.
	\end{proof}

\appendix
\section{Technical proofs}\label{sec:tech}

\begin{proof}[{Proof of Lemma \ref{lemma:evo}}]\label{proof:lemma_evo}
By \eqref{eq:CovDerLocally}, since $\Gamma_{ij}^k=\Gamma_{ji}^k$ and since $f$ is smooth we obtain
 \begin{align*}
 \nabla_{\partial_t}\partial_x f &= \sum_{k=1}^n (\partial_t \partial_x f_k + \sum_{i,j=1}^{n} \partial_x f_i \partial_t f_j \Gamma_{ij}^k)\partial_{y_k}\\
 &= \sum_{k=1}^n (\partial_x \partial_t f_k + \sum_{i,j=1}^n \partial_x f_i \partial_t f_j \Gamma_{ji}^k)\partial_{y_k} =\nabla_{\partial_x}\partial_t f ,
 \end{align*}
 that is \eqref{eq:commutingcoordinates}.  By the compatibility of the metric, the evolution $\partial_t f = V +\phi \partial_s f$ and \eqref{eq:commutingcoordinates}
 \begin{align*}
 \partial_t |\partial_x f|_g & 
 = \frac{1}{|\partial_x f|_g} \langle \nabla_{\partial_t }\partial_xf , \partial_x f\rangle_g
 = \frac{1}{|\partial_x f|_g} \langle \nabla_{\partial_x }\partial_tf , \partial_x f\rangle_g 
 \\
   &= \partial_x \langle \partial_tf , \partial_s f\rangle_g -\langle \partial_tf , \nabla_{\partial_x}\partial_s f\rangle_g  =(\partial_s \phi - \langle V,\Kapp\rangle_g )|\partial_xf|_g \,,
 \end{align*}
since $\Kapp=\nabla_{\partial_s}\partial_s f$ and $\partial_x =|\partial_xf|_g  \partial_s$. This gives \eqref{eq:evolutionoflineelement1}. Formula \eqref{eq:evolutionoflineelement2} is a direct consequence of \eqref{eq:evolutionoflineelement1} and $\diff s = |\partial_x f|_g \diff x$.

Formula \eqref{eq:commutator1} follows from the product rule, \eqref{eq:commutingcoordinates} and \eqref{eq:evolutionoflineelement1}. Indeed,
\begin{align*}
 \nabla_{\partial_t}\partial_s f - \nabla_{\partial_s}\partial_t f 
 &= \big( \partial_t \frac{1}{|\partial_xf|_g} \big) \partial_x f +\frac{1}{|\partial_xf|_g} \nabla_{\partial_t}{\partial_x f} - \frac{1}{|\partial_xf|_g}\nabla_{\partial_x}\partial_t f \\
 &= -\frac{1}{|\partial_xf|^2_g}(\partial_s \phi - \langle V,\Kapp\rangle_g )|\partial_xf|_g{\partial_x f} = (\langle V,\Kapp\rangle_g -\partial_s \phi)\partial_s f.
 \end{align*}
Since $N$ is a vector field normal to $f$ and by definition of $\nabla_{\partial_s}^\bot$ we find
\begin{align*}
\nabla_{\partial_s} N - \nabla_{\partial_s}^\bot N 
&= \langle{\nabla_{\partial_s}N, \partial_sf }\rangle_g \partial_s f 
= (\partial_s \langle N, \partial_s f\rangle _g - \langle N, \nabla_{\partial_s} \partial_sf \rangle_g)\partial_sf=- \langle N, \Kapp\rangle _g \partial_s f.
\end{align*}
that is \eqref{eq:NormalDerOfNormal}. Formula \eqref{eq:EvoUnitVelocity} for the evolution of the tangent vector is a consequence of \eqref{eq:commutator1} and \eqref{eq:NormalDerOfNormal} since
\begin{align*}
 \nabla_{\partial_t} \partial_s f 
 &= \nabla_{\partial_s}( V + \phi \partial_s f)
 + (\langle V,\Kapp\rangle_g -\partial_s \phi)\partial_s f\\
   &= \nabla_{\partial_s}^\bot V - \langle V,\Kapp\rangle_g\partial_s f + \phi \Kapp
 + \langle V,\Kapp\rangle_g \partial_s f= \nabla_{\partial_s}^\bot V + \phi \Kapp.
\end{align*}
The evolution of a vector field $N$ normal to $f$ is given by \eqref{eq:EvoNormal} since from $\langle N,\partial_s f \rangle_g=0 $ and \eqref{eq:EvoUnitVelocity} we get
\begin{align*}
 \nabla_{\partial_t}N
&=  \nabla_{\partial_t}^\bot N + (\partial_t \langle N,\partial_s f \rangle_g - \langle N,\nabla_{\partial_t} \partial_s f \rangle_g)\partial_s f= \nabla_{\partial_t}^\bot N - \langle N, \nabla_{\partial_s}^\bot V + \phi \Kapp\rangle \partial_s f.
\end{align*}

In the next formulas since we have derivatives of second order we expect a contribution from the curvature. By the definition of the Riemannian curvature endomorphism we have
\begin{equation}\label{sera1}
\nabla_{\partial_t}\nabla_{\partial_x}\Phi - \nabla_{\partial_x}\nabla_{\partial_t}\Phi = R(\partial_tf,\partial_xf)\Phi + \nabla_{[\partial_tf, \partial_xf]}\Phi
\end{equation}
and the latter derivative vanishes by \eqref{eq:commutingcoordinates} and linearity. In the case of constant sectional curvature $S_0$ we apply \eqref{eq:RinSpaceForms} to find
\begin{align*}
R(\partial_tf,\partial_xf)\Phi
&= S_0(\langle{\partial_xf,\Phi}\rangle_g \partial_t f - \langle \partial_t f, \Phi\rangle_g \partial_xf)
\\
&= S_0(\langle{\partial_xf,\Phi}\rangle_g V + \phi \langle\partial_x f, \Phi\rangle _g \partial_s f- \langle V, \Phi\rangle_g \partial_xf-\phi \langle\partial_x f, \Phi\rangle _g \partial_s f)\\
&=S_0(\langle{\partial_xf,\Phi}\rangle_g V- \langle V, \Phi\rangle_g \partial_xf).
\end{align*}
that combined with \eqref{sera1} gives \eqref{eq:EvoSCommutators1}. By \eqref{eq:evolutionoflineelement1} and \eqref{eq:EvoSCommutators1} we have
\begin{align*}
 \nabla_{\partial_t}\nabla_{\partial_s}\Phi - \nabla_{\partial_s}\nabla_{\partial_t}\Phi
& = -(\partial_s \phi - \langle V,\Kapp\rangle_g )\nabla_{\partial_s}\Phi + \frac{1}{|\partial_xf|_g} (\nabla_{\partial_t}\nabla_{\partial_x}\Phi - \nabla_{\partial_x}\nabla_{\partial_t}\Phi) \\
&  =- (\partial_s\phi - \langle V,\Kapp\rangle _g)\nabla_{\partial_s} \Phi + \frac{S_0}{|\partial_xf|_g}(\langle{\partial_xf,\Phi}\rangle_g V- \langle V, \Phi\rangle_g \partial_xf),
\end{align*}
which shows \eqref{eq:commutatorS}.

For the normal component of the derivatives we find by compatiblity
\begin{align*}
&\!\!\!\!\nabla_{\partial_t}^\bot \nabla_{\partial_s}^\bot N - \nabla_{\partial_s}^\bot \nabla_{\partial_t}^\bot N\\
&= 
\nabla_{\partial_t} \nabla_{\partial_s}^\bot N 
 - \langle \nabla_{\partial_t} \nabla_{\partial_s}^\bot N ,\partial_s f\rangle_g \partial_s f - \nabla_{\partial_s} \nabla_{\partial_t}^\bot N + \langle \nabla_{\partial_s}\nabla_{\partial_t}^\bot N
,\partial_s f\rangle_g \partial_s f\\
&= 
\nabla_{\partial_t} \nabla_{\partial_s} N 
-\nabla_{\partial_t}(\langle  \nabla_{\partial_s} N  ,\partial_s f\rangle_g \partial_s f)
 - (\partial_t \langle \nabla_{\partial_s}^\bot N ,\partial_s f\rangle_g - \langle  \nabla_{\partial_s}^\bot N ,\nabla_{\partial_t}\partial_s f\rangle_g)\partial_s f\\
 &
\;\;- \nabla_{\partial_s} \nabla_{\partial_t} N 
+\nabla_{\partial_s}(\langle  \nabla_{\partial_t} N  ,\partial_s f\rangle_g \partial_s f)
 + (\partial_s \langle \nabla_{\partial_t}^\bot N ,\partial_s f\rangle_g - \langle  \nabla_{\partial_t}^\bot N ,\Kapp \rangle_g)\partial_s f\\
&= 
\nabla_{\partial_t} \nabla_{\partial_s} N - \nabla_{\partial_s} \nabla_{\partial_t} N\\
& \; \; -\partial_{t}(\langle  \nabla_{\partial_s} N  ,\partial_s f\rangle_g ) \partial_s f
- \langle  \nabla_{\partial_s} N  ,\partial_s f\rangle_g \nabla_{\partial_t}\partial_s f
 + \langle  \nabla_{\partial_s}^\bot N ,\nabla_{\partial_t}\partial_s f\rangle_g \partial_s f\\
 &
\;\;+\partial_s(\langle  \nabla_{\partial_t} N  ,\partial_s f\rangle_g ) \partial_s f + \langle  \nabla_{\partial_t} N  ,\partial_s f\rangle_g \Kapp
 - \langle  \nabla_{\partial_t}^\bot N ,\Kapp \rangle_g\partial_s f\\
 &= 
\nabla_{\partial_t} \nabla_{\partial_s} N - \nabla_{\partial_s} \nabla_{\partial_t} N\\
&\;\; - \langle\nabla_{\partial_t}\nabla_{\partial_s}N, \partial_s f\rangle_g \partial_s f  - \langle\nabla_{\partial_s}N,\nabla_{\partial_t} \partial_s f\rangle_g \partial_s f \\
& \;\;- \langle\nabla_{\partial_s}N, \partial_s f\rangle_g \nabla_{\partial_t} \partial_s f
 + \langle  \nabla_{\partial_s}^\bot N ,\nabla_{\partial_t} \partial_s f \rangle_g\partial_s f\\
  &\;\; +\langle \nabla_{\partial_s}\nabla_{\partial_t}N, \partial_s f \rangle_g \partial_s f  +  \langle  \nabla_{\partial_t}N ,\Kapp \rangle_g\partial_s f + \langle\nabla_{\partial_t}N, \partial_s f\rangle_g \Kapp
 - \langle  \nabla_{\partial_t}^\bot N ,\Kapp \rangle_g\partial_s f
\\&= 
\nabla_{\partial_t} \nabla_{\partial_s} N - \nabla_{\partial_s} \nabla_{\partial_t} N\\
&\;\; + \langle(\nabla_{\partial_s}\nabla_{\partial_t} -\nabla_{\partial_t}\nabla_{\partial_s})N, \partial_s f\rangle_g \partial_s f  - \langle\nabla_{\partial_s}N,\nabla_{\partial_t} \partial_s f\rangle_g \partial_s f  \\
&\;\;- \langle\nabla_{\partial_s}N, \partial_s 
  f\rangle_g\nabla_{\partial_t} \partial_s f
  + \langle  \nabla_{\partial_s}^\bot N ,\nabla_{\partial_t} \partial_s f \rangle_g\partial_s f+ \langle\nabla_{\partial_t}N, \partial_s f\rangle_g \Kapp \, ,
	\end{align*}
	and by \eqref{eq:commutatorS} and \eqref{eq:EvoUnitVelocity} we get
	\begin{align*}
	&\!\!\!\!\nabla_{\partial_t}^\bot \nabla_{\partial_s}^\bot N - \nabla_{\partial_s}^\bot \nabla_{\partial_t}^\bot N\\
&= 
(\langle V,\Kapp\rangle _g-\partial_s\phi )\nabla_{\partial_s }^\bot N - \langle \nabla_{\partial_s}N, \partial_s f \rangle_g  \langle \partial_s f ,\nabla_{\partial_t} \partial_s f\rangle_g \partial_s f 
\\
& \; \; + \langle N, \Kapp \rangle_g \nabla_{\partial_t} \partial_s f - \langle N, \nabla_{\partial_t} \partial_s f \rangle_g \Kapp \\
&= 
(\langle V,\Kapp\rangle _g-\partial_s\phi )\nabla_{\partial_s }^\bot N  + \langle N, \Kapp \rangle_g (\nabla_{\partial_s}^\bot V + \phi \Kapp ) - \langle N, \nabla_{\partial_s}^\bot V + \phi \Kapp \rangle_g \Kapp \\
& = (\langle V,\Kapp\rangle _g-\partial_s\phi )\nabla_{\partial_s }^\bot N  + \langle N, \Kapp \rangle_g \nabla_{\partial_s}^\bot V - \langle N, \nabla_{\partial_s}^\bot V \rangle_g \Kapp \, ,
\end{align*}
where we used that the (space- and time-) derivatives of $\langle N, \partial_s f\rangle_g$ vanish.

It remains to consider the evolution equation of the curvature. Letting $\Phi = \partial_s f$ in \eqref{eq:commutatorS} and applying \eqref{eq:EvoUnitVelocity} we can calculate
\begin{align*}
\nabla_{\partial_t}\Kapp &= \nabla_{\partial_t} \nabla_{\partial_s} \partial_s f  =\nabla_{\partial_s} \nabla_{\partial_t} \partial_s f
- (\partial_s\phi - \langle V,\Kapp\rangle _g)\nabla_{\partial_s}\partial_s f+ S_0 V\\
&= \nabla_{\partial_s}\nabla_{\partial_s}^\bot V+ (\partial_s\phi) \Kapp + \phi \nabla_{\partial_s}\Kapp -
(\partial_s\phi)\Kapp + \langle V,\Kapp\rangle _g\Kapp+ S_0 V\\
&= \nabla_{\partial_s}^\bot\nabla_{\partial_s}^\bot V
+ \langle \nabla_{\partial_s}\nabla_{\partial_s}^\bot V, \partial_s f\rangle_g \partial_s f
 + \phi \nabla_{\partial_s}\Kapp  + \langle V,\Kapp\rangle _g\Kapp+ S_0 V\\
&=(\nabla_{\partial_s}^\bot)^2 V  - 
\langle{\nabla_{\partial_s}^\bot V,\Kapp}\rangle_g \partial_s f + \phi 
\nabla_{\partial_s} \Kapp + \langle V,\Kapp\rangle_g \Kapp + S_0 V,
\intertext{since by compatibility}
\langle \nabla_{\partial_s}\nabla_{\partial_s}^\bot V, \partial_s f\rangle_g &= \partial_s \langle \nabla_{\partial_s}^\bot V, \partial_s f\rangle_g - \langle \nabla_{\partial_s}^\bot V, \nabla_{\partial_s} \partial_s f\rangle_g = - \langle \nabla_{\partial_s}^\bot V, \Kapp \rangle_g.
\end{align*}
This shows \eqref{eq:EvoKappa}. Taking the normal projection we immediately have the subsequent equality \eqref{eq:EvoNormalKappa}. 
\end{proof}

\begin{proof}[{Proof of Lemma \ref{lem:dercurvature}}]
 From \autoref{eq:EvoKappa} we find for $\partial_t f = V + 0\partial_sf = -\nabla\E_\lambda(f)$ from \autoref{eq:GradAsPolynom} that 
 \begin{align*}
  \nabla_{\partial_t}^\bot \Kapp &= (\nabla_{\partial_s}^\bot)^2 V + \langle V,\Kapp \rangle_g \Kapp + S_0 V\\
  &= -(\nabla_{\partial_s}^\bot )^4 \Kapp+ P_3^{2,2}(\Kapp)  + P_1^{2,2}(\Kapp) +  P_5^{0,0}(\Kapp) + P_3^{0,0}(\Kapp)+P_1 ^{0,0}(\Kapp),
 \end{align*}
which shows this lemma for $m=0$. For $m > 0$ we find inductively from \autoref{eq:EvoNormalKappa}
that 
\begin{align*}
\nabla_{\partial_t}^\bot (\nabla_{\partial_s}^\bot)^{m+1}\Kapp &=
\nabla_{\partial_s}^\bot \nabla_{\partial_t}^\bot (\nabla_{\partial_s}^\bot)^m\Kapp 
+ \langle V, \Kapp \rangle \nabla_{\partial_s}^\bot (\nabla_{\partial_s}^\bot)^m \Kapp
\\&\quad + \langle (\nabla_{\partial_s}^\bot)^m \Kapp , \Kapp \rangle  \nabla_{\partial_s}^\bot V - \langle (\nabla_{\partial_s}^\bot)^m \Kapp , \nabla_{\partial_s}^\bot V\rangle \Kapp
\\
&=
\nabla_{\partial_s}^\bot \big{(} 
-(\nabla_{\partial_s}^\bot)^{4+m} \Kapp + P_3^{2+m, 2+m}(\Kapp) + P_1^{2+m,2+m} (\Kapp) \\
&  \qquad + P_5^{m,m}(\Kapp) + P_3^{m,m}(\Kapp) + P_1 ^{m,m}(\Kapp) \big{)}+ P_3^{m+3,\max\{m+1,2\}}(\Kapp)
\\
&\quad  + P_5^{m+1,m+1}(\Kapp) + P_3^{m+1,m+1}(\Kapp) + P_3 ^{m+3,\max\{m,3\}}(\Kapp)\\
&= -(\nabla_{\partial_s}^\bot)^{5+m} \Kapp + P_3^{3+m, 3+m}(\Kapp) + P_1^{3+m,3+m}(\Kapp)\\
   &\quad + P_5^{m+1,m+1}(\Kapp) + P_3^{m+1,m+1}(\Kapp) + P_1 ^{m+1,m+1}(\Kapp) .\qedhere
\end{align*}
\end{proof}

\begin{proof}[{Proof of Lemma \ref{lem:dercurvnormnicht}}]
 The first assertion follows from \autoref{eq:PartialIntegration} since
 \[\nabla_{\partial_s} \Kapp = \nabla_{\partial_s }^\bot\Kapp + \langle \nabla_{\partial_s} \Kapp , \partial_s f\rangle  \partial_s f =  \nabla_{\partial_s }^\bot\Kapp+0 - |\Kapp|^2 \partial_s f.\]
Let us show the second statement inductively. For $m =2$ we find
 \begin{align*}
  \nabla_{\partial_s}^2 \Kapp &=\nabla_{\partial_s} \nabla_{\partial_s} ^\bot\Kapp -\partial_s(|\Kapp|^2) \partial_s f - |\Kapp|^2 \nabla_{\partial_s } \partial f\\
  &= (\nabla_{\partial_s}^\bot)^2\Kapp + \langle \nabla_{\partial_s} \nabla_{\partial_s}^\bot \Kapp, \partial_s f\rangle \partial_s f - 2 \langle {\Kapp, \nabla_{\partial_s} ^\bot}\Kapp\rangle \partial_s f - |\Kapp|^2 \Kapp\\
  &=  (\nabla_{\partial_s}^\bot)^2\Kapp -3 \langle \nabla_{\partial_s}^\bot  \Kapp, \Kapp \rangle \partial_s f - |\Kapp|^2 \Kapp =   (\nabla_{\partial_s}^\bot)^2\Kapp -P_{2}^{1,1} (\Kapp) \partial_s f - P_3^{0,0}(\Kapp),
 \end{align*}
and for the induction step we first note that for $b$ even we have by convention that
$\nabla_{\partial_s} P_b^{a,c}(\Kapp) = \partial_s P_b^{a,c}(\Kapp) =P_b^{a+1,c+1}(\Kapp)$, while for odd $b$ we find that
\begin{align*}
 \nabla_{\partial_s}P_b^{a,c} &= \nabla_{\partial_s}^\bot P_b^{a,c} + \langle \nabla_{\partial_s}P_b^{a,c}, \partial_s f\rangle \partial_s f = P_b^{a+1,c+1} + 0 - \langle P_b^{a,c}, \nabla_s \partial_s f\rangle \partial_s f \\
 &= P_b^{a+1,c+1}+ P_{b+1}^{a,c} \partial_sf.
\end{align*}
We thus have
\begin{align*}
 \nabla_{\partial_s}^m \Kapp &= \nabla_s \big ((\nabla_{\partial_s }^\bot)^{m-1}\Kapp 
  + \sum_{b=2,\; b\text{ even}}^{m} P_b^{m -b, m-b}(\Kapp) \partial_sf 
  +  \sum_{b=3,\; b\text{ odd}}^{m} P_b^{m -b, m-b}(\Kapp) \big)\\
  &= (\nabla_{\partial_s}^\bot)^m \Kapp + \langle \nabla_{\partial_s } (\nabla_{\partial_s}^\bot)^{m-1}\Kapp, \partial_s f\rangle \partial_s f\\
  &\quad + \sum_{b=2,\; b\text{ even}}^{m} (P_b^{m +1  - b, m+1 -b}(\Kapp) \partial_sf + P_b^{m -b, m-b}(\Kapp) \Kapp)\\
  & \quad + \sum_{b=3,\; b\text{ odd}}^{m}(P_b^{m+1 -b, m+1-b}(\Kapp) + P_{b+1}^{m -b, m-b}(\Kapp)\partial_s f)\\
  &= (\nabla_{\partial_s}^\bot)^m \Kapp +P_2^{m-1,m-1}(\Kapp) \partial_s f\\
  & \quad + \sum_{b=2,\; b\text{ even}}^{m} P_b^{m +1  - b, m+1 -b}(\Kapp) \partial_sf
   + \sum_{b=2,\; b\text{ even}}^{m} P_{b+1}^{m -b, m-b}(\Kapp)\\
  & \quad + \sum_{b=3,\; b\text{ odd}}^{m} P_b^{m+1 -b, m+1-b}(\Kapp) +\sum_{b=4,\; b\text{ even}}^{m+1}
  P_{b}^{m -(b-1), m-(b-1)}(\Kapp)\partial_s f\\
  & = (\nabla_{\partial_s }^\bot)^m\Kapp 
  + \sum_{b=2,\; b\text{ even}}^{m+1} P_b^{m+1 -b, m+1-b}(\Kapp) \partial_sf 
  +  \sum_{b=3,\; b\text{ odd}}^{m+1} P_b^{m+1 -b, m+1-b}(\Kapp).\qedhere
\end{align*}

\end{proof}

\begin{proof}[{Proof of Lemma \ref{lem:controlparsuff}}]
Here we use repeatedly that $\nabla_{\partial_x}=  \gamma \nabla_{\partial_s}$ since $\gamma = |\partial_x f|_g$. Then for $m=1$ we have $ \nabla_{\partial_x} N = \gamma \nabla_{\partial_s} N$. By characterisation of the Levi-Civita connection 
$$ \nabla_{\partial_x}^2 N =  \nabla_{\partial_x} (\gamma \nabla_{\partial_s} N) = \gamma^2 \nabla_{\partial_s}^2 N + (\partial_x \gamma) 
\nabla_{\partial_s} N \, .$$
The general statement follows then by induction. Indeed,
\begin{align*}
\nabla_{\partial_x}^{m+1} N & =  \nabla_{\partial_x} (\gamma^m   \nabla_{\partial_s}^m N + \sum_{j=1}^{m-1} P_{m,j}(\gamma, ....,\partial_x^{m-j} \gamma ) \nabla_{\partial_s}^j N) \\
& = \gamma^{m+1}   \nabla_{\partial_s}^{m+1} N  + m \gamma^{m-1} (\partial_x \gamma) \nabla_{\partial_s}^m N\\
& + \sum_{j=1}^{m-1} \gamma P_{m,j}(\gamma, ....,\partial_x^{m-j} \gamma ) \nabla_{\partial_s}^{j+1} N 
+ \sum_{j=1}^{m-1} \big( \partial_x P_{m,j}(\gamma, ....,\partial_x^{m-j} \gamma ) \big) \nabla_{\partial_s}^j N \\
& = \gamma^{m+1}   \nabla_{\partial_s}^{m+1} N  + \sum_{j=1}^{m} P_{m+1,j}(\gamma, ....,\partial_x^{m+1-j} \gamma ) \nabla_{\partial_s}^j N
\end{align*}
mit $P_{m+1,j}$ polynomials of degree at most $m$, being the $P_{m,j}$ polynomials of degree at most $m-1$.
\end{proof}

\begin{proof}[Proof of \eqref{eq:sabbath}]\label{sabbath}
We first rewrite some formulas using the notation with the $P^{a,c}_b(\Kapp)$. From \eqref{VV}, \eqref{eq:EvoNormalKappa}
\begin{align*}\label{VV}
\nabla_{\partial_s}^{\bot} V & =  P_1^{3,3}(\Kapp) + P_3^{1,1}(\Kapp) +P_1^{1,1}(\Kapp) \, ,\\
\nabla_{\partial_t}^{\bot} \Kapp & = P_1^{4,4}(\Kapp) + P_3^{2,2}(\Kapp) +P_1^{2,2}(\Kapp) + P_5^{0,0}(\Kapp) +P_3^{0,0}(\Kapp) +P_1^{0,0}(\Kapp) \, ,
\end{align*} 
and hence
$$ \nabla_{\partial_t}^{\bot}  P^{0,0}_b(\Kapp) =  P_b^{4,4}(\Kapp) +P_{b+2}^{2,2}(\Kapp) + P_{b}^{2,2}(\Kapp) + P_{b+4}^{0,0}(\Kapp) +P_{b+2}^{0,0}(\Kapp) +P_{b}^{0,0}(\Kapp) \,  .$$
We compute then with \eqref{eq:Commutator_Normal_Normal_S}
\begin{align*}
\nabla_{\partial_t}^{\bot} V &  = - \nabla_{\partial_s}^{\bot}  (\nabla_{\partial_t}^{\bot}\nabla_{\partial_s}^{\bot} \Kapp)  + 
(P_2^{2,2}(\Kapp) + P_4^{0,0}(\Kapp) +P_2^{0,0}(\Kapp)) (\nabla_{\partial_s}^\bot)^2 \Kapp \\
& \quad \quad + P^{1,1}_2(\Kapp) (P_1^{3,3}(\Kapp) + P_3^{1,1}(\Kapp) +P_1^{1,1}(\Kapp) ) +  P_3^{4,4}(\Kapp) + P_5^{2,2}(\Kapp) +P_3^{2,2}(\Kapp) \\
& \quad \quad + P^{0,0}_7(\Kapp) +  P^{0,0}_5(\Kapp)  + P^{0,0}_3(\Kapp) + P^{4,4}_1(\Kapp) +  P^{2,2}_1(\Kapp)  + P^{0,0}_1(\Kapp)\\
& =  (\nabla_{\partial_s}^{\bot})^2  (P_1^{4,4}(\Kapp) + P_3^{2,2}(\Kapp) +P_1^{2,2}(\Kapp) + P_5^{0,0}(\Kapp) +P_3^{0,0}(\Kapp) +P_1^{0,0}(\Kapp) ) \\
& \quad \quad + \nabla_{\partial_s}^{\bot} ( P_5^{1,1}(\Kapp) +P_3^{1,1}(\Kapp) + P_3^{3,3}(\Kapp)) \\
& \quad \quad + P^{0,0}_7(\Kapp) +  P^{0,0}_5(\Kapp)  + P^{0,0}_3(\Kapp) + P^{4,4}_1(\Kapp) +  P^{2,2}_1(\Kapp)  + P^{0,0}_1(\Kapp)\\
& \quad \quad + P_3^{4,4}(\Kapp) + P_5^{2,2}(\Kapp) +P_3^{2,2}(\Kapp) \\
& =  P_1^{6,6}(\Kapp) + P_3^{4,4}(\Kapp) +P_1^{4,4}(\Kapp) + P_5^{2,2}(\Kapp) +P_3^{2,2}(\Kapp) +P_1^{2,2}(\Kapp) \\
& \quad \quad + P^{0,0}_7(\Kapp) +  P^{0,0}_5(\Kapp)  + P^{0,0}_3(\Kapp) + P^{0,0}_1(\Kapp)\, .
\end{align*}

\end{proof}
\section{Details for the Short Time Existence}
\label{sec:STE}
\subsection{Proof of \autoref{prop:NormalGraph}}
\label{proof:lemma_normal_graph}


The main ingredient of the proof of \autoref{prop:NormalGraph} is the implicit function theorem, for which we need a lower bound for the radius of the domain of the implicit function, for which we could not find an adequate reference in the literature.

\subsubsection{A  control of the domain of the implicit function}


\begin{lemma}\label{lemma:inverse_function_lower_bound}
 Let $X,Y$ be Banach spaces, $U \subset X$ open and $f\in \CalC^1(U;Y)$ such that $f'(a)$ is an isomorphism for some $a\in U$. If there exists some $r > 0$ such that $\overline {B}_r(a) \subset U$ and
 \[
  \|f'(x)-f'(z)\| \leq \frac{1}{2 \|f'(a)^{-1}\|}\quad \text{for all }x,z \in \overline {B}_r(a),
 \]
 then for all $\tilde y \in \displaystyle \overline{B}_{\frac{r}{2 \|f'(a)^{-1}\|}}\left(f(a)\right) \subset Y$ there exists a unique $\tilde x \in \overline {B}_r(a)$ with $f(\tilde x) = \tilde y$.
\end{lemma}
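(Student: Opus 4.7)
The plan is a standard Banach fixed-point argument tailored to give explicit control of the ball on which solutions exist. Given $\tilde y$ in the target ball, I would rewrite the equation $f(\tilde x) = \tilde y$ as a fixed-point problem by setting
\[
 T(x) \defeq x - f'(a)^{-1}\bigl(f(x) - \tilde y\bigr),
\]
so that $T(\tilde x)=\tilde x$ is equivalent to $f(\tilde x)=\tilde y$. The whole proof then reduces to showing that $T$ is a contraction of the closed ball $\overline{B}_r(a)$ into itself; uniqueness and existence of $\tilde x$ follow immediately from the Banach fixed-point theorem.

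For the contraction estimate I would compute $T'(x) = I - f'(a)^{-1} f'(x) = f'(a)^{-1}\bigl(f'(a) - f'(x)\bigr)$, so that on $\overline{B}_r(a)$
\[
 \|T'(x)\| \;\leq\; \|f'(a)^{-1}\|\cdot\|f'(a)-f'(x)\|\;\leq\;\|f'(a)^{-1}\|\cdot\tfrac{1}{2\|f'(a)^{-1}\|}=\tfrac12,
\]
by the hypothesis with $z=a$. The mean value inequality then gives $\|T(x)-T(z)\|\leq \tfrac12\|x-z\|$ on $\overline{B}_r(a)$.

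For the self-mapping property I would estimate $T(a)-a = -f'(a)^{-1}(f(a)-\tilde y)$, which yields $\|T(a)-a\|\leq \|f'(a)^{-1}\|\cdot\|\tilde y - f(a)\|\leq r/2$ by the choice of $\tilde y$. Combining with the contraction estimate,
\[
 \|T(x)-a\| \;\leq\; \|T(x)-T(a)\| + \|T(a)-a\| \;\leq\; \tfrac12\|x-a\| + \tfrac{r}{2} \;\leq\; r
\]
for all $x\in \overline{B}_r(a)$, so $T$ indeed maps the closed ball into itself. Banach's fixed-point theorem in the complete metric space $\overline{B}_r(a)$ then provides the desired unique $\tilde x$.

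The argument has no real obstacle; the only point requiring a tiny bit of care is to apply the hypothesis with $z=a$ to control $\|T'\|$ and to separately use the radius condition on $\tilde y$ to get the self-mapping property, so that the two constants combine to give exactly the stated ball $\overline{B}_{r/(2\|f'(a)^{-1}\|)}(f(a))$ of admissible data.
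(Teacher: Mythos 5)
Your proof is correct and is essentially the argument the paper uses: the paper normalises via $\tilde f(x)=f'(a)^{-1}(f(x+a)-f(a))$ and then cites a quantitative surjectivity lemma of Lang, whose proof is exactly your contraction argument for the Newton-type map $T(x)=x-f'(a)^{-1}(f(x)-\tilde y)$ on the complete ball $\overline B_r(a)$. All steps (the bound $\|T'\|\leq \tfrac12$ from the hypothesis with $z=a$, the self-mapping estimate from the radius condition on $\tilde y$, and Banach's fixed-point theorem) check out.
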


\begin{proof}
Let $\tilde f(x) = (f'(a))^{-1}(f(x+a)-f(a))$ and $s = \frac{1}{2}$, then this lemma follows from \cite[Chapter 6, \S 1, Lemma 1.3]{Lang_Analysis}. 
\end{proof}

With this lemma we gain a control from below on the radius of the implicit function's domain.

\begin{prop}\label{prop:lower_bound_implicit}
 Let $X,Y,Z$ be Banach spaces, $U \subset X, V \subset Y$ open, $F\in \CalC^1(U\times V;Z)$, $a = (a_1,a_2) \in U \times V$ with $F(a) = 0$ and $D_2F(a)\colon Y \to Z$ be an isomorphism. Let $\lambda = \frac{1}{2}\left( 1+\|D_2F(a)^{-1}\|(1+ \|D_1F(a)\|)\right)^{-1}$ and $r>0$ such that $\overline{B}_r(a) \subset (U \times V)$ and 
 \[
 \|DF(x,y) -DF(\tilde x, \tilde y)\| \leq \lambda \quad \text{ for all } (x,y), (\tilde x, \tilde y) \in \overline{B}_r(a).
 \]
 Then for all $x \in U$ with $\|x-a_1\| \leq \lambda r$ there exists a unique $y \in V$ with $\|y-a_2\| \leq r$ and $F(x,y) = 0$.\\
 Moreover, if $F$ is twice continuously differentiable with $\|D^2F\| \leq M$ we can choose 
 \[r = \frac{1}{4 M} \left( 1+\|D_2F(a)^{-1}\|(1+ \|D_1F(a)\|)\right)^{-1}.\]
\end{prop}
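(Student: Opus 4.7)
The natural approach is to reduce the implicit function problem to a quantitative inverse function statement by encoding the constraint $x=\,$given into the image. Define
\[
 G \colon U \times V \to X \times Z, \qquad G(x,y) \defeq (x - a_1, F(x,y)),
\]
where we endow the product spaces with the maximum norm $\|(u,v)\| = \max(\|u\|,\|v\|)$. Then $G(a) = (0,0)$, and with respect to the block decomposition the differential at $a$ reads
\[
 DG(a) = \begin{pmatrix} I_X & 0 \\ D_1F(a) & D_2F(a) \end{pmatrix},
 \qquad
 DG(a)^{-1} = \begin{pmatrix} I_X & 0 \\ -D_2F(a)^{-1}D_1F(a) & D_2F(a)^{-1} \end{pmatrix}.
\]
In the max norm one reads off directly
\[
 \|DG(a)^{-1}\| \leq 1 + \|D_2F(a)^{-1}\|\bigl(1+\|D_1F(a)\|\bigr) = \frac{1}{2\lambda}.
\]

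\textbf{Applying the quantitative inverse function lemma.} Since the first component of $G$ is linear, the Lipschitz increment of $DG$ on $\overline B_r(a)$ coincides with that of $DF$, so our hypothesis yields $\|DG(x,y) - DG(\tilde x,\tilde y)\| \leq \lambda = \frac{1}{2\|DG(a)^{-1}\|}$ on $\overline B_r(a)$. \autoref{lemma:inverse_function_lower_bound} therefore produces, for every $\tilde y \in \overline B_{\lambda r}(G(a)) = \overline B_{\lambda r}(0)\subset X \times Z$, a unique preimage in $\overline B_r(a) \subset X \times Y$.

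\textbf{Deducing the implicit function statement.} Given $x \in U$ with $\|x-a_1\|\leq \lambda r$, set $\tilde y\defeq (x-a_1,0)$, which has max-norm at most $\lambda r$. The preimage $(\tilde x,y)\in \overline B_r(a)$ satisfies $\tilde x - a_1 = x-a_1$ and $F(\tilde x,y)=0$, so automatically $\tilde x = x$, $\|y-a_2\|\leq r$, and $F(x,y)=0$. Uniqueness of $y$ in $\overline B_r(a_2)$ follows from the uniqueness of the preimage of $\tilde y$ under $G$, since any such $y$ gives $(x,y) \in \overline B_r(a)$ in the max norm.

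\textbf{Quantitative bound under $\CalC^2$.} If $F \in \CalC^2$ with $\|D^2F\|\leq M$, then the mean value inequality yields
\[
 \|DF(x,y) - DF(\tilde x,\tilde y)\| \leq M\,\|(x,y)-(\tilde x,\tilde y)\| \leq 2Mr
\]
on $\overline B_r(a)$. Demanding $2Mr \leq \lambda$ gives the choice $r = \frac{\lambda}{2M} = \frac{1}{4M}\bigl(1+\|D_2F(a)^{-1}\|(1+\|D_1F(a)\|)\bigr)^{-1}$ stated in the proposition. The only subtle point is the consistent use of the max norm so that the block-matrix norm bound for $DG(a)^{-1}$ takes the exact form appearing in the definition of $\lambda$; nothing else in the argument goes beyond bookkeeping once the auxiliary map $G$ is in place.
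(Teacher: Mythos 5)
Your proposal is correct and follows essentially the same route as the paper: the paper also passes to the auxiliary map $(x,y)\mapsto(x,F(x,y))$, computes the block lower-triangular inverse of its differential to get the bound $\tfrac{1}{2\lambda}$, applies \autoref{lemma:inverse_function_lower_bound}, and obtains the $\CalC^2$ radius from the mean value theorem. The only (immaterial) difference is your translation by $a_1$ in the first component and your explicit insistence on the max norm, which the paper leaves implicit.
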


\begin{proof}
 The function $f \colon U \times V \to X \times Z$, $(x,y) \mapsto (x,F(x,y))$ is continuously differentiable and satisfies
 \[
  Df(x,y) = \begin{pmatrix}
             \operatorname {id}_X& 0 \\ D_1F(x,y) & D_2F(x,y)
            \end{pmatrix},
 \]
 thus $Df(a)$ is an isomorphism with 
 \[
    Df(a)^{-1} = \begin{pmatrix}
             \operatorname {id}_X& 0 \\ -D_2F(a)^{-1}D_1F(a) & D_2F(a)^{-1}
            \end{pmatrix},
 \]
 and $\|Df(a)^{-1}\| \leq 1 + \|D_2F(a)^{-1}\|\|D_1F(a)\| + \|D_2F(a)^{-1}\| = \frac{1}{2\lambda}$. Thus, for all $(x,y), (\tilde x, \tilde y) \in \overline{B}_r(a)$ we find
 \[
  \|Df(x,y) - Df(\tilde x, \tilde y) \| =   \|DF(x,y) - DF(\tilde x, \tilde y) \| \leq \lambda \leq \frac{1}{2\|Df(a)^{-1}\|}.
 \]
 Let $\|x-a_1\|\leq \lambda r$. Then $\omega = (x,0) \in \overline{B}_{\lambda r}(f(a))$ and thus from \autoref{lemma:inverse_function_lower_bound} we find some unique $(\hat x, y) \in \overline{B}_{r}(a)$ with $(x,0) = \omega = f(\hat x, y) = (\hat x, F(\hat x, y))$, from which the first part follows. The second part follows immediately from the mean value theorem.
 \end{proof}

\subsubsection{Proof of \autoref{prop:NormalGraph}}
As an application of \autoref{prop:lower_bound_implicit} we can show the following lemma.

\begin{lemma}\label{lemma:normalgraph} 
 Let $m \in \N_0$ and $\overline f \in \CalC^{5+m,\alpha}(\Sph^1;\Hyp^2)$ be an immersion. Then there exists some constant $\sigma(\|\overline f\|_{\CalC^{4,\alpha}}, \min \overline f_2, \min |\partial_x \overline f|) > 0$ such that for all $\psi \in \CalC^{4+m,\alpha}(\Sph^1;\Hyp^2)$ with $\|\psi\|_{ \CalC^{4,\alpha}} \leq \sigma$ the function $\overline f + \psi $ (where the addition is defined within the global chart from \autoref{subs:hyp}) is an immersion of $\Sph^1 \to \Hyp^2$, and there exists a unique diffeomorphism $\Phi$ of $\Sph^1$ such that 
 \[
   (\overline f + \psi) \circ \Phi = \overline f +  N,
 \]
 where the function $N\in \CalC^{4+m,\alpha}(\Sph^1;\Hyp^2)$ is orthogonal to $\partial_x\overline f$.
\end{lemma}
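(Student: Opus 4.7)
The plan is to construct the diffeomorphism $\Phi$ as the solution of an implicit equation that expresses the vanishing of the tangential component of $(\overline f + \psi)\circ\Phi - \overline f$, and then to apply the quantitative implicit function theorem from Proposition \ref{prop:lower_bound_implicit} to obtain a threshold $\sigma$ that depends only on the quantities listed in the statement. Once $\Phi$ is found, the function $N := (\overline f + \psi)\circ\Phi - \overline f$ will automatically be orthogonal to $\partial_x\overline f$, and the remaining assertions follow from $\Phi$ being close to the identity.

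Concretely, identifying $\Phi$ locally with $\operatorname{id}+\eta$ via a chart of $\Sph^1$ (or via the exponential map), define
\[
F(\eta,\psi)(x) = \langle (\overline f + \psi)(x+\eta(x)) - \overline f(x),\, \partial_x \overline f(x) \rangle_{g(\overline f(x))},
\]
regarded as a map between appropriate H\"older spaces. Then $F(0,0)=0$, and a short computation gives
\[
D_1 F(0,0)\,h(x) = |\partial_x \overline f(x)|^2_{g(\overline f(x))}\,h(x),
\]
i.e.\ multiplication by a positive function bounded below by $(\min|\partial_x\overline f|_g)^2>0$; hence $D_1F(0,0)$ is an isomorphism, and the norm of its inverse is controlled by $\min\overline f_2$ and $\min|\partial_x \overline f|_g$. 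The bounds on $D^2 F$ (or on the Lipschitz constant of $DF$) entering Proposition \ref{prop:lower_bound_implicit} depend only on $\|\overline f\|_{\CalC^{4,\alpha}}$, $\min\overline f_2$ and $\min|\partial_x\overline f|_g$, once $\|\psi\|_{\CalC^{4,\alpha}}$ is restricted to be small. The proposition then yields a uniform $\sigma$ so that for every $\psi$ with $\|\psi\|_{\CalC^{4,\alpha}}\le \sigma$ there is a unique small $\eta$ with $F(\eta,\psi)=0$. Smallness in $\CalC^{1,\alpha}$ guarantees both that $\overline f+\psi$ is an immersion and that $\Phi=\operatorname{id}+\eta$ is a diffeomorphism of $\Sph^1$, while the identity $\langle N(x),\partial_x\overline f(x)\rangle_g = F(\eta,\psi)(x)=0$ gives the orthogonality claim.

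The main obstacle is the loss of regularity that is typical for composition operators in H\"older spaces: the map $\eta\mapsto \overline f\circ(\operatorname{id}+\eta)$ is continuously differentiable between $\CalC^{4+m,\alpha}$-spaces only because $\overline f \in \CalC^{5+m,\alpha}$, which is exactly the regularity assumed. A clean way to handle this, while keeping $\sigma$ uniform, is to first apply Proposition \ref{prop:lower_bound_implicit} at a low regularity (e.g.\ $\CalC^{4,\alpha}$), where the quantitative bounds on $DF$ are exactly those claimed, and then to recover the higher regularity $\eta\in\CalC^{4+m,\alpha}$ by a bootstrap: for fixed $\psi\in\CalC^{4+m,\alpha}$, the equation $F(\eta(x),\psi)(x)=0$ is a scalar equation in $\eta(x)$, parametrised by $x$, whose derivative at the solution is bounded away from zero; the implicit function theorem applied pointwise propagates the $\CalC^{4+m,\alpha}$-regularity of $\overline f$, $\partial_x\overline f$ and $\psi$ to $\eta$.
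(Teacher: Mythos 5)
Your proposal is correct and follows essentially the same route as the paper: the paper also sets up $F(\tilde\psi,\tilde\phi)=\langle (\overline f+\tilde\psi)\circ(\operatorname{id}+\tilde\phi)-\overline f,\partial_x\overline f\rangle_{g(\overline f)}$ between H\"older spaces of mismatched order (the unknown reparametrisation living in a lower-order space precisely because of the composition-operator regularity loss you identify), notes that the derivative in the reparametrisation variable at the origin is multiplication by $|\partial_x\overline f|_g^2$, invokes Proposition \ref{prop:lower_bound_implicit} with bounds depending only on $\|\overline f\|_{\CalC^{4,\alpha}}$, $\min\overline f_2$, $\min|\partial_x\overline f|$, and then recovers the full $\CalC^{4+m,\alpha}$-regularity of the diffeomorphism by differentiating the implicit equation and bootstrapping.
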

\begin{proof} We begin with $m=0$ and identify $\Sph^1 \cong \R/(2\pi\Z)$ and lift $\overline f$ to a periodic function $\tilde f\colon \R \to \Hyp^2$ to obtain a linear structure. We denote the subspaces of $2\pi$-periodic function with the index \textit{per}.
 \\
 \emph{Claim}: Let $\tilde f \in \CalC^{5,\alpha}_{per}(\R;\R^2)$ be an immersion with $\tilde f_2 > 0$, then there exists a $\sigma >0$ such that for all $\tilde \psi \in  \CalC^{4,\alpha}_{per}(\R;\R^2)$ with $\|\tilde \psi \|_{ \CalC^{4,\alpha}} \leq \sigma$ there exists a $\tilde \Phi \in \CalC^{4,\alpha}(\R;\R)$ which is strictly increasing and satisfies $\tilde \Phi(x + 2\pi) = \tilde \Phi(x) + 2\pi$ for all $x \in \R$, such that
  $(\tilde f + \tilde \psi )\circ \tilde \Phi = \tilde f + \tilde N$
 for some $\tilde N \in \CalC^{4,\alpha}_{per}(\R;\R^2)$ with $\langle \tilde N, \partial_x \tilde f \rangle_{g(\tilde f)} = 0$. 
 Furthermore, $\sigma$ depends only on $c_1 \defeq \|\tilde f\|_{\CalC^{4,\alpha}}$, $c_2 \defeq \min \tilde f_2$ and $c_3 \defeq \min |\partial_x \tilde f|$.\\
 It is clear that the claim implies \autoref{lemma:normalgraph} for $m=0$. Let us show the claim. Let $\tilde f$ be given and define $X \defeq \CalC^{4,\alpha}_{per}(\R;\R^2)$, $Y \defeq Z \defeq \CalC^{2,\alpha}_{per}(\R;\R)$. Let $\rho_1 \defeq \frac{1}{2}\min\{c_2,c_3\}$ and $\rho_2 \defeq \frac{1}{2}$. Then for all $\tilde \psi \in B_{\rho_1}(0) \subset X$ the function $\tilde g \defeq \tilde f + \tilde \psi$ satisfies $\partial_x \tilde g \neq 0$ and $\tilde g_2 > 0$. Moreover, for all $\tilde \phi \in B_{\rho_2}(0) \subset Y$ the function $\tilde \Phi \defeq \operatorname{id} + \tilde \phi \colon \R \to \R$ is strictly increasing and satisfies $\tilde \Phi(x+2\pi) = \tilde \Phi(x)+2\pi$ for all $x \in \R$. Let us first show that we can choose some possibly smaller $\rho_1(c_1,c_2,c_3)>0$ such that
 \begin{equation}
  \label{eq:NonZero}
  \langle \partial_x ( \tilde f + \tilde \psi )\circ (\operatorname{id} + \tilde \phi),\partial_x \tilde f \rangle_{g(\tilde f)} \geq \frac{1}{2} \min |\partial_x\tilde f|^2_{g(\tilde f)}
 \end{equation}
 for all $\tilde \psi \in B_{\rho_1}(0) \subset X, \tilde \phi \in B_{\rho_{2}}(0) \subset Y$. Indeed, by Cauchy-Schwarz we find
\begin{align*}
 &|\langle \partial_x ( \tilde f + \tilde \psi )\circ (\operatorname{id} + \tilde \phi), \partial_x \tilde f \rangle_{g(\tilde f)} -  \min |\partial_x\tilde f|^2_{g(\tilde f)}| \\
 & \qquad \leq \left( |\partial_x \tilde f \circ (\operatorname {id} + \tilde \phi) - \partial_x \tilde f|_{g(\tilde f)}  + |\partial_ x \tilde \psi \circ (\operatorname{id}+\tilde \phi) |_{g(\tilde f)}\right) |\partial_x \tilde f|_{g(\tilde f)}
 \\
 &\qquad \leq c(c_2) \left( \|\partial_x^2 \tilde f\|_\infty \|\tilde \phi\|_\infty + \|\partial_x \tilde \psi\|_\infty \right) \|\partial_x \tilde f\|_\infty
 \\
 &\qquad \leq c(c_2) \|\tilde f \|_{\CalC^1} (\|\tilde f \|_{\CalC^2} +1)(\|\tilde \phi \|_{\CalC^0} + \|\tilde \psi \|_{\CalC^1}) \leq \frac{1}{2} \min |\partial_x \tilde f|_{g(\tilde f)}.
\end{align*}
 for $\rho_1(c_1,c_2,c_3) > 0$ small enough. Let $U \defeq  B_{\rho_1}(0) \subset X$, $V\defeq B_{\rho_2}(0) \subset Y$ and
 \[
  F\colon U \times V \to Z,\quad (\tilde \psi, \tilde \phi) \mapsto 
  \langle ( \tilde f + \tilde \psi )\circ (\operatorname{id} + \tilde \phi) - \tilde f,  \partial_x \tilde  f \rangle_{g(\tilde f)}.
 \]
 Then $F$ is well defined and $\CalC^2$ with
\[D_1 F(\tilde \psi, \tilde \phi) \tilde h_1 = \langle \tilde h_1 \circ (\operatorname{id}+\tilde \phi),\partial_x \tilde f \rangle _{g(\tilde f)}, \qquad  D_2 F(\tilde \psi, \tilde \phi) \tilde h_2 = \tilde h_2 \langle \partial_x (\tilde f +\tilde \psi) \circ (\operatorname{id}+\tilde \phi),\partial_x \tilde f \rangle _{g(\tilde f)},
\]and
\begin{align*}&D_{11} F(\tilde \psi, \tilde \phi) [\tilde h_1, \hat h_1] = 0, \qquad D_{12} F(\tilde \psi, \tilde \phi) [\tilde h_1, \tilde h_2] = \tilde h_2 \langle \partial_x \tilde h_1 \circ (\operatorname{id}+\tilde \phi),\partial_x \tilde f \rangle _{g(\tilde f)},\\
   &D_{22} F(\tilde \psi, \tilde \phi) [\tilde h_2,\hat h_2] = \tilde h_2 \hat h_2 \langle \partial_x^2  (\tilde f +\tilde \psi) \circ (\operatorname{id}+\tilde \phi),\partial_x \tilde f \rangle _{g(\tilde f)},
 \end{align*}
from which one can show (using $\|u\cdot v\|_{\CalC^{2,\alpha}} \leq C(\Omega,\alpha)\|u\|_{\CalC^{2,\alpha}} \|v\|_{\CalC^{2,\alpha}}$ and $\|u\circ v\|_{\CalC^{2,\alpha}} \leq \|u\|_\infty + C(\Omega,\alpha, \tilde c)\|u\|_{\CalC^{2,\alpha}} \|v\|_{\CalC^{2,\alpha}}$ for all $\|v\|_{\CalC^{2,\alpha}}\leq \tilde c$)
that
\begin{equation}
 \label{eq:D^2FM}
 \|D^2F(\tilde \psi, \tilde \phi)\| \leq C(2\pi, \alpha, \rho_1, \rho_2, c_1, c_2)  \eqdef M
\end{equation}
for some $M = M (2\pi, \alpha, c_1, c_2, c_3) > 0$. Moreover we find for $\tilde \phi = 0$, $\tilde \psi = 0$ that
 \begin{equation}
 \label{eq:D1F}
 \|D_1F(0,0)\| \leq C(2\pi, \alpha, c_1,c_2)
\end{equation}
and since $D_2F(0,0)\tilde h_2 = \tilde h_2 \cdot |\partial_x \tilde f|^2_{g(\tilde f)}$ we see that $D_2F(0,0) \colon Y \to Z$ is invertible with 
 \begin{equation}
 \label{eq:D2F}
 \|D_2F(0,0)^{-1}\| \leq C\left(\alpha, 2\pi, \| \left(|\partial_x \tilde f|_{g(\tilde f)}\right)^{-1}\|_{\CalC^{2,\alpha}}\right) \leq C( 2\pi,\alpha, c_1,c_2, c_3).
 \end{equation}
 Let $\lambda = \frac{1}{2}\left( 1+\|D_2F(0)^{-1}\|(1+ \|D_1F(0)\|)\right)^{-1}$, $r=\frac{\lambda}{2M}$ 
 and define $\sigma \defeq \min\{\rho_2, \lambda r\}$. 
 We find that $\sigma$ only depends on the constants $c_1$, $c_2$ and $c_3$. 
 Now \autoref{prop:lower_bound_implicit} shows that for all $\|\tilde \psi\|_{\CalC^{4,\alpha}} \leq \sigma$ there exists a unique $\tilde \phi \in \CalC^{2,\alpha}_{per}(\R;\R)$ with $\|\tilde \phi\| \leq \min\{r, \rho_2\}$ such that $F(\tilde \psi,\tilde \phi) = 0$. 
 To finish the proof of the claim it remains to show that $\tilde \phi \in \CalC^{4,\alpha}$. Indeed, if we differentiate the equation $0 = F(\tilde \psi,\tilde \phi)$ we find using \autoref{eq:NonZero} that
 \begin{align}
  &1+ \partial_x \tilde \phi = \left(\langle \partial_x (\tilde f + \tilde \psi)\circ (\operatorname{id} + \tilde  \phi), \partial_x \tilde f \rangle _{g(\tilde f)} \right)^{-1}\cdot  {\Big{(}}
  2  \frac{\partial_x \tilde f_2}{\tilde f_2} \langle ( \tilde f + \tilde \psi )\circ (\operatorname{id} + \tilde \phi) - \tilde f,  \partial_x \tilde  f \rangle_{g(\tilde f)}
      \nonumber\\
      &\qquad\qquad \qquad 
      +
      |\partial_x \tilde f|_{g(\tilde f)}^2 - \langle (\tilde f + \tilde \psi)\circ(\operatorname{id}+\tilde \phi)-\tilde f, \partial_x^2 \tilde f , \rangle_{g(\tilde f)}
  {\Big{)}},\label{eq:diffeo_is_smoother}
 \end{align}
which shows that $\partial_x \tilde \phi \in \CalC^{2,\alpha}$, i.e. $\tilde \phi \in \CalC^{3,\alpha}$ and hence, using this equation again and the fact that $\tilde f \in \CalC^{5,\alpha}$ we see that $\partial_x \phi \in \CalC^{3,\alpha}$, which finishes the proof for $m=0$. The case of $m\geq 1$ follows similarly from \eqref{eq:diffeo_is_smoother}.
\end{proof}

\begin{proof}[{Proof of \autoref{prop:NormalGraph}}]
Let $\mu_1 \defeq \frac{1}{2}\min \{\min (f_0)_2, \min_{x} |\partial_xf_0|\}$. 
Then any $h_1, h_2 \colon \Sph^1 \to \Hyp^2$ satisfying $\|h_i-f_0 \|_{\CalC^{4+m,\alpha}} \leq \mu_1$ is immersed.  For any vector field $X$ along $h_1$ we have $\langle X, \partial_x h_1 \rangle_{g(h_1)}\partial_x h_1 - \langle X, \partial_x  h_1\rangle_{g(h_1)}\partial_x  h_1  = 0$, whence there exists some  $\mu_2 > 0$ small enough independent of $X$ such that for $\|h_1 - h_2 \|_{\CalC^1} \leq 2\mu_2$ we have after translating $X$ along $h_2$
    \[|\langle X(x), \partial_x h_1(x) \rangle_{g(h_1(x))}\partial_x h_1(x) - \langle X(x), \partial_x h_2(x)\rangle_{g(h_2(x))}\partial_x h_2(x)|_{g(h_2(x))} \leq \frac{1}{2} |X(x)|_{g(h_2(x))}.\] 
    Thus, 
    with $X =  N \bot \partial_x h_1$ we find for these $h_1,h_2$ that
    \begin{align*}
      &|\Pi_{h_2(x)}^\bot N(x)|_{g(h_2(x))} = |N(x) - \Pi_{h_2(x)}^\top N(x) + \Pi_{h_1(x)}^\top N(x)|_{g(h_2(x))} \\
      &\qquad \geq |N(x)|_{g(h_2(x))} - 
      |\langle N, \partial_x h_1 \rangle_{g(h_1)}\partial_x h_1 - \langle N, \partial_x h_2\rangle_{g(h_2)}\partial_x h_2|_{g(h_2)}\\
      &\qquad 
      \geq  \frac{1}{2} \sup _x |N(x)|_{g(h_2(x))} > 0,
    \end{align*}
    which shows the first part of \autoref{prop:NormalGraph} for any $\tilde f$ satisfying $\|\tilde f-f_0\|_{\CalC^{4+m,\alpha}} \leq \tilde \mu \defeq \frac{1}{2} \min\{\mu_1,\mu_2\}$ and any $0<\mu \leq \tilde \mu$.
\\Let $\delta \defeq \min\{ \frac{1}{2} \min (f_0)_2, \mu_1, \frac{1}{2} \|f_0\|_{\CalC^{4,\alpha}}\}$. Then $\delta > 0$ and for any $\tilde f\in \CalC^{5+m,\alpha}(\Sph^1;\Hyp^2)$ with $\|\tilde f-f_0\|_{\CalC^{4,\alpha}} \leq \delta$ we find that $\tilde f$ is an immersion and satisfies
\[ \delta \leq \frac{1}{2} \min (f_0)_2  \leq \min \tilde f_2 \leq \delta + \|f_0\|_{\CalC^{4,\alpha}},
\quad  \delta \leq \mu_1 = \frac{1}{2}\min |\partial_x\tilde f_0| \leq \min |\partial_xf| \leq \delta + \|f_0\|_{\CalC^{4,\alpha}}
\]
and $  \delta \leq \frac{1}{2}\|f_0\|_{\CalC^{4,\alpha}} \leq \|\tilde f\|_{\CalC^{4,\alpha}} \leq \delta +  \|f_0\|_{\CalC^{4,\alpha}}$.
From \autoref{lemma:normalgraph} we see that there exists some $\tilde \sigma(\| \tilde f\|_{\CalC^{4,\alpha}}, \min \tilde f_2, \min |\partial_x \tilde f|) > 0$ such that for all $\psi \in \CalC^{4+m,\alpha}(\Sph^1;\Hyp^2)$ with $\|\psi\|_{ \CalC^{4,\alpha}} \leq \tilde  \sigma$ there exists a unique diffeomorphism $\Phi$ of $\Sph^1$ such that $(\tilde  f + \psi) \circ \Phi =  \tilde f + u \tilde N$, where $\tilde N$ is a smooth unit normal vector field along $\tilde f$ and $ u \in \CalC^{4+m,\alpha}(\Sph^1;\R)$ is a function. Whence we find that $\tilde \sigma \geq \sigma(\delta, \|f_0\|_{\CalC^{4,\alpha}}) > 0$. 
Now put $\mu \defeq \min\{\tilde \mu, \frac{\sigma}{2}\}$. By density we can now choose a smooth function $\overline f \colon \Sph^1 \to \Hyp^2$ satisfying $\|\overline f - f_0\|_{\CalC^{4,\alpha}} \leq  \min\{\mu, \delta\}$. From the construction we then find that $\overline f$ satisfies all assertions of \autoref{prop:NormalGraph}.\qedhere
\end{proof}

\subsection{Schauder Theory}
\subsubsection{Proof of \autoref{thm:linearIsoSchauderM}}
\label{proof:ProofLinearSchauderM}
To show \autoref{thm:linearIsoSchauderM} we recall the classic Schauder results for parabolic equations of fourth order 
on intervals, where we follow \cite[VI.3]{Eidelman}. Thus we consider linear parabolic problems of the type
\begin{equation} \label{eq:schauder_linear}
 \left \{
\begin{array}{rll}
 \partial_t u(x,t) - \sum_{|\gamma|\leq 4} a_\gamma (x,t) \partial_x^\gamma u &= f(x,t), &(x,t) \in I \times [0,T],\\
 u(0,x) &= u_0(x)&x \in I\\
 u|_S = 0, \partial_x u|_S &= 0,
\end{array}
\right.
\end{equation}
where $ I = (a,b) \subset \R$ is an open bounded interval. Here the functions $f:\overline {Q_T} \defeq \overline I \times [0,T] \to \R$, $u_0 \colon \overline I \to \R$, $\phi_1, \phi_2 \colon S \defeq \partial I \times [0,T] \to \R$ are given, and $u \colon \overline {Q_T} \to \R$ is the unknown. Note that the boundary condition satisfies the Shapiro-Lopatinski\u{\i} condition (see \cite[Definition I.8]{Eidelman}). We fix some $s>0, s \notin \N$. 
%

\begin{mythm}[{\cite[Theorem VI.21]{Eidelman}}]
Assume that \label{thm:linearIsoSchauder} the coefficients of the equation satisfy $a_\gamma \in \HS[s]{\frac{s}{4}}(\overline {Q_T})$, the data satisfies the smoothness assumptions $f \in \HS[s]{\frac{s}{4}}(\overline {Q_T})$, 
$u_0 \in \CalC^{4+s}(\overline I)$ and the compatibility conditions of order $\lfloor \frac{s + 4}{4}\rfloor$ (c.f. \cite[p. 319f]{MR0241821} or \cite[p. 219]{Eidelman}). Moreover, we assume that $L$ is parabolic in the sense of Petrovskii (c.f. \cite[Def. I.1]{Eidelman}).
Then there exists some constant $C>0$ independent of $f$ and $u_0$ and some unique solution $u \in \HS[4+s]{\frac{4+s}{4}} 
(\overline {Q_T})$ to the problem \eqref{eq:schauder_linear}. The solution additionally satisfies
\begin{equation}
\label{eq:norm_iso_L_schauder}
\| u \| _ {\HS[4+s]{\frac{4+s}{4}}(  
\overline {Q_T}
)} \leq C (\|f\|_ {\HS[s]{\frac{s}{4}} (\overline {Q_T}
)} + \| u_0 \| _ {\CalC^{4+s}(\overline I
)}).
\end{equation}
\end{mythm}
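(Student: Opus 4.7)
The plan is to follow the classical Schauder strategy for higher-order parabolic equations: derive a priori estimates for a suitable model problem, promote them to variable coefficients via a freezing-of-coefficients argument with a partition of unity, and then obtain existence from the method of continuity. Uniqueness will be immediate from the a priori bound applied to the difference of two solutions.

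First I would analyse the model problem $L_0 u \defeq \partial_t u - \sum_{|\gamma|=4} a^0_\gamma \partial_x^\gamma u = f$ with constant coefficients satisfying the Petrovskii condition, posed on the half-line $(0,\infty)$ with the homogeneous Dirichlet-type boundary condition $u(0,t) = \partial_x u(0,t) = 0$. Using a Fourier--Laplace transform in $(x,t)$ one constructs the fundamental solution and the Poisson-type boundary kernel explicitly; the Shapiro-Lopatinski\u{\i} condition is precisely what makes the boundary symbol matrix invertible, so the kernel representation of the solution converges and has the expected parabolic scaling. The desired bound $\|u\|_{\HS[4+s]{\frac{4+s}{4}}} \leq C(\|f\|_{\HS[s]{\frac{s}{4}}} + \|u_0\|_{\CalC^{4+s}})$ then follows from Calder\'on--Zygmund-type estimates for singular integral operators in anisotropic parabolic H\"older spaces applied to these kernels.

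Next I would promote the estimate to variable coefficients by freezing. I cover $\overline{Q_T}$ by parabolic cylinders $B_r(x_0) \times (t_0 - r^4, t_0]$ of sufficiently small radius $r$, and on each cylinder compare $L$ with the constant-coefficient operator $L_{(x_0,t_0)}$ obtained by evaluating the coefficients at the centre. H\"older continuity of the $a_\gamma$ together with interpolation in parabolic H\"older spaces makes $\|(L - L_{(x_0,t_0)}) u\|_{\HS[s]{s/4}}$ as small as desired when $r$ is small, and so the model estimate yields a local bound for $u$. A partition of unity on $\overline{I} \times [0,T]$, using interior and flattened-boundary cylinders (the flatness of $\partial I$ is free since $I$ is an interval) combined with parabolic-H\"older interpolation to absorb lower-order contributions, patches these local bounds into the global a priori estimate in \eqref{eq:norm_iso_L_schauder}. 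Existence is then obtained by the method of continuity applied to the homotopy $L_\tau = (1-\tau)(\partial_t - \partial_x^4) + \tau L$, $\tau \in [0,1]$, all of which share the same parabolicity structure and boundary symbol; the a priori estimate, uniform in $\tau$, makes the set of $\tau$ for which $L_\tau$ is surjective onto the data space both open and closed, and $\tau = 0$ (a pure biharmonic heat operator) is trivially included via direct convolution with the fundamental solution.

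The main obstacle will be the boundary part of the model estimate: one must carefully extract the Poisson kernel from the Shapiro-Lopatinski\u{\i} cofactor matrix and bound its singularity in the anisotropic parabolic metric $d((x,t),(y,\tau)) = |x-y| + |t-\tau|^{1/4}$, which mixes space and time asymmetrically. The compatibility conditions of order $\lfloor (s+4)/4 \rfloor$ at the corner $\partial I \times \{0\}$ enter precisely at this point: they are exactly what is needed to make $f$ and $u_0$ patch together smoothly enough in the parabolic H\"older class so that the corner singularity of the kernel integrates against data of finite $\HS[s]{s/4}$-norm. Once this kernel analysis is in place, the remainder of the argument is a standard, if technical, bookkeeping exercise.
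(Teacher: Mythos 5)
This statement is not proved in the paper at all: it is quoted verbatim as \cite[Theorem VI.21]{Eidelman} and used as a black box, so there is no ``paper proof'' to compare against. Your outline is the classical Schauder strategy that the cited reference itself follows (explicit solution of the constant-coefficient model problem on the half-line via Fourier--Laplace transform and the Shapiro--Lopatinski\u{\i} condition, kernel estimates in the anisotropic parabolic metric, freezing of coefficients with a partition of unity, and the method of continuity), and the role you assign to the compatibility conditions of order $\lfloor\frac{s+4}{4}\rfloor$ is the correct one. As a plan it is sound; as a proof it is not yet one, since the entire analytic content --- constructing the Poisson kernels from the Lopatinski\u{\i} cofactor matrix and establishing the Calder\'on--Zygmund-type bounds in $\HS[s]{\frac{s}{4}}$ --- is described rather than carried out, and this is precisely the part that occupies most of Chapter VI of Eidelman--Zhitarashu. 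Two smaller points to watch if you were to execute it: at $\tau=0$ in the continuity method the interval problem with the boundary conditions $u|_S=\partial_x u|_S=0$ is solved by the half-line Poisson kernels (plus a localisation to handle the two endpoints), not by plain convolution with the whole-space fundamental solution; and the homotopy $L_\tau$ should be set up after reducing to homogeneous initial and boundary data, since the compatibility conditions themselves depend on the operator and would otherwise vary with $\tau$. Given that the paper's intent is to cite this result rather than reprove it, the appropriate ``proof'' here is the reference itself.
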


We can now show \autoref{thm:linearIsoSchauderM}.

\begin{proof} [{Proof of \autoref{thm:linearIsoSchauderM}}]
 Uniqueness follows from Young's and Gr\"onwall's inequality and an approximation argument. 
 \\To show existence we want to apply \autoref{thm:linearIsoSchauder}, so we need to work with coordinates. As explained in \eqref{eq:charts} we choose  four charts $\phi_i \colon U_i \to V_i$ of $\Sph^1 = \{(x,y) \in \R^2 \st x^2 + y^2 = (2\pi)^{-2}\}$, where $U_i = (0,\frac{1}{2})$ and $V_i$ is the intersection of $\Sph^1$ with the canonical {half planes} in $\R^2$, such that $\phi_i$ is an isometry (between Riemannian manifolds) for all $i=1,\ldots,4$. Then the geodesic distance for $(x,y)\in V_i$ is given by $d(x,y) = |\phi^{-1}(x)-\phi^{-1}(y)|_{\R}$. 
 Let $\tilde \phi_i \colon \tilde U_i \defeq  U_i \times [0,T] \to \tilde V_i = V_i \times [0,T] \subset \Sph^1 \times [0,T]$ be the corresponding charts on $\Sph^1 \times [0,T]$. Again we find that these charts are isometric.\\
 We choose a smooth partition of unity $1 = \chi_1 + \chi_2+\chi_3+\chi_4$ on $\Sph^1$ with $\operatorname{supp} \chi_i \subset V_i$. On $\tilde U_i = (0,\frac{1}{2})\times [0,T]$ we consider the equation
\begin{equation} 
 \left \{
\begin{array}{rl}
  ((\tilde \phi_i^{-1})^*L)u &= (f\chi_i)\circ \tilde \phi_i,\\
 u(0,\cdot) &= (\chi_i u_0)\circ  \tilde \phi_i\\
 u|_{\partial U_i} =0, \partial_x u|_{\partial U_i} &= 0, \end{array}
\right.
\end{equation}
where $(\tilde \phi_i^{-1})^*L$ denotes the operator $L$, written in local coordinates on $\tilde \phi_i^{-1}( \tilde V_i) = (0,\frac{1}{2}) \times [0,T]$, that is:  $(\tilde \phi_i^{-1})^*L = \partial_t - \sum a_\gamma\circ \tilde \phi_i\partial_x^\gamma$.
Since $ \chi_i \circ \phi_i$ has compact support in $U_i$ we find that all compatibility conditions are satisfied. \autoref{thm:linearIsoSchauder} gives the existence of a unique solution $u_i \colon \tilde U_i \to \R$ and some $C_i$ such that 
\begin{align}
\| u_i \| _ {\HS[4+s]{\frac{1+s}{4}} 
( \overline { U}_i \times [0,T])
}
&\leq C_i (\|(\chi_i f)\circ \tilde \phi_i \|_ {\HS[s]{\frac{s}{4}} ( \overline { U}_i \times [0,T]) } + \|(\chi_i u_0)\circ \phi_i \| _ {\CalC^{4+s}( \overline { U}_i)})\nonumber \\
&= C_i (\|\chi_i f\|_ {\HS[s]{\frac{s}{4}} ( \overline { V}_i \times [0,T]) } + \|\chi_i u_0 \| _ {\CalC^{4+s}( \overline { V}_i)})\nonumber \\
&\leq C (\|f\|_ {\HS[s]{\frac{s}{4}} ( \Sph^1 \times [0,T]) } + \| u_0 \| _ {\CalC^{4+s}( \Sph^1 )})\label{eq:estimateu_ilocally}
\end{align}
where $C = C(C_i,\chi_i)$. 
Then $u = \sum_{i=1}^4 u_i\circ \tilde{\phi}_i^{-1}$ solves \eqref{eq:linearParaboliconM}.\\
To show the continuity-estimate we first note that the continuity of $u,\partial_t u, \ldots, \partial_x u , \partial_x^2 u, \ldots$ follows directly. Moreover,
\[ \|u\|_{\CalC^0(\Sph^1\times [0,T])} \leq \sum_i \|u_i\|_{\CalC^0(U_i \times [0,T])}\leq C (\|f\|_ {\HS[s]{\frac{s}{4}} ( \Sph^1 \times [0,T]) } + \| u_0 \| _ {\CalC^{4+s}( \Sph^1 )})\]
and the other $\sup$-norms can be estimated similarly. It remains to estimate the H\"older quotients: We distinguish two cases: If $d(x,y)\geq \frac{1}{4}$, then we find from what we have just shown that
\[
 \sup_t \frac{|u(x,t)-u(y,t)|}{d(x,y)^\alpha} \leq \frac
 {2 \|u\|_{\CalC^0}}{4^\alpha} \leq C (\|f\|_ {\HS[s]{\frac{s}{4}} ( \Sph^1 \times [0,T]) } + \| u_0 \| _ {\CalC^{4+s}( \Sph^1 )}),
\]
\[
 \sup_t \frac{|\nabla u(x,t)-\tau_{y,x}\nabla u(y,t)|}{d(x,y)^\alpha} \leq \frac
 {2 \|\nabla u\|_{\CalC^0}}{4^\alpha} \leq C (\|f\|_ {\HS[s]{\frac{s}{4}} ( \Sph^1 \times [0,T]) } + \| u_0 \| _ {\CalC^{4+s}( \Sph^1 )}),
\]
and so on for all higher spatial and time derivatives, since the parallel transport is an isometry. In the second case, if $d(x,y) < \frac{1}{4}$, we find a patch $V_i$ such that $x\in V_i$, $y \in V_i$. Since the chart $\phi_i$ is an isometry we have by \eqref{eq:estimateu_ilocally}
\begin{align*}
 \sup_t \frac{|u(x,t)-u(y,t)|}{d(x,y)^\alpha} &\leq \sup_t \sup_{x_i,y_i\in V_i}  \frac{|u(x_i,t)-u(y_i,t)|}{d(x_i,y_i)^\alpha} \\
 &= \sup_t \sup_{\phi_i(\hat x),\phi_i(\hat y) \in V_i}  \frac{|u(\phi_i(\hat x),t - u(\phi_i(\hat y),t)|}{|\hat x - \hat y|^\alpha}\\
 &= \sup_t [u_i (\cdot,t)]_\alpha \leq C (\|f\|_ {\HS[s]{\frac{s}{4}} ( \Sph^1 \times [0,T]) } + \| u_0 \| _ {\CalC^{4+s}( \Sph^1 )}),
\end{align*}
\begin{align*}
 \sup_t \frac{|\nabla u(x,t)-\tau_{y,x}\nabla u(y,t)|}{d(x,y)^\alpha} 
 &= \sup_t \sup_{\phi_i(\hat x),\phi_i(\hat y) \in V_i}  \frac{|\partial_x u(\phi_i(\hat x),t - \partial_x u(\phi_i(\hat y),t)|}{|\hat x - \hat y|^\alpha}\\
 &= \sup_t [\partial_x u_i (\cdot,t)]_\alpha
 \leq C (\|f\|_ {\HS[s]{\frac{s}{4}} ( \Sph^1 \times [0,T]) } + \| u_0 \| _ {\CalC^{4+s}( \Sph^1 )})
\end{align*}
and so on for all derivatives. This finishes the proof.
\end{proof}

\subsubsection{Uniqueness part of \autoref{thm:nonlinear_schauder}}
\label{proof:uniqueness}
Let us finish the proof of \autoref{thm:nonlinear_schauder} by showing uniqueness of the solution.

\begin{proof}
Let $u_i \in \HS[4+\alpha]{\frac{1+\alpha}{4}}(\Sph^1 \times [0,T_i])$, $i=1,2$ be two solutions to \eqref{eq:schauder_nonlinear}. Without loss of generality we may assume that $0 < T_1 \leq T_2 \leq \varepsilon$. Let us define 
\[
\tau \defeq \sup \{t \in [0,T_1) \,|\, u_1(s) = u_2(s) \text{ in $\CalC^{4,\alpha}(\Sph^1)$ for all } 0 \leq s \leq t\}.\]
We need to show that $\tau = T_1$.\\
We first show that $\tau > 0$. To do so we choose the unique solution $\tilde u \in \HS[4+\alpha]{\frac{1+\alpha}{4}}(\Sph^1 \times [0,T_2])$ to the linear equation \eqref{eq:linear_soln_tilde_u} as in the proof of \autoref{thm:nonlinear_schauder} and let $\eta_i \defeq u_i -\tilde u$. 
From our assumption we find that $\Phi(\eta_i) = 0$, where $\Phi$ is given in \autoref{eq:defPhi}. 
We can estimate the norm of $\|D\Phi[0]\|_{\mathcal{L}(\Xcalnull_T \to \Ycalnull_T)}$ independently of $0 < T \leq \varepsilon$. 
As a consequence of the proof of the Inverse Function Theorem, 
the open sets $U$ and $V$ from \autoref{eq:PhiIso} both contain an open ball centered at zero and $\Phi[0]$ respectively with radius that can be chosen independent of $0 < T \leq \varepsilon$. Hence there exists a constant $r > 0$ independent of $T$ such that $\eta \in U$ if $\|\eta|_{[0,T]}
\|_{\Xcalnull_T} \leq r$ and $g \in V$ 
if $\|g - \Phi[0]|_{[0,T]}\|_{\Ycalnull_T} \leq r$
for all $0 < T \leq \varepsilon$. 
Since 
\begin{equation}
\HS[\alpha]{\frac{\alpha}{4}}(\Sph^1 \times [0,T]) \subset \CalC([0,T];\CalC^{0,\beta}(\Sph^1)) \cap \CalC^{0,\frac{\beta}{4}}([0,T];\CalC(\Sph^1) )   \label{eq:EinbettungStetig}   
\end{equation}
for $\beta < \alpha$ we can show that there exists some $T > 0$ small enough such that 
\begin{equation}\label{eq:smallness_T}
 \|\eta_1|_{[0,T]}
\| _{\Xcalnull_T} \leq r,\quad \|\eta_2|_{[0,T]}
\| _{\Xcalnull_T} \leq r \quad\text{ and }\quad \|\Phi[0]|_{[0,T]}\|_{\Ycalnull_T} \leq r.
\end{equation}
Indeed, since $\Phi[0](t,\cdot)|_{t=0} = 0$ and $\beta < \alpha$ we see that $\|\Phi[0]|_{[0,T]}\|_{\Ycalnull_T} \to 0$ as $ T \to 0$ is a direct consequence of \eqref{eq:EinbettungStetig}. 
To show that also $\|\eta_i|_{[0,T]}
\| _{\Xcalnull_T} \to 0$, we first note that $\eta_i = u_i - \tilde u \in \HS[\alpha + 4]{\frac{1+\alpha}{4}}(\Sph^1 \times [0,T])$ satisfy
$  \eta_i(0) = u_0 - u_0 = 0$  and
\[ \dot \eta_i = \mathbf{F}[u_i] - \mathbf{F}[u_0] + D\mathbf{F}[0]u_0 - D\mathbf{F}[0]\tilde u, \quad\text{
thus }
 \dot \eta_i (0) = 0.
\]
Whence $\|\dot \eta_i\|_\infty \to 0$ and $\|\partial_x^k\eta_i\|_\infty \to 0$ as $T \to 0$, $k = 0, 1, \ldots, 4$, 
hence we can estimate all H\"older seminorms using \eqref{eq:EinbettungStetig} again, showing
\[
 \|\eta_i\|_{\HS[4 + \beta]{\frac{1+\beta}{4}}{(\Sph^1 \times [0,T])}}\to 0   \text{ as }T\to 0.
\]

%
This shows that $\eta_1$ and $\eta_2$ both solve $\Phi(\eta_i) = 0$, and $\eta_1, \eta_2 \in U$ as well as $0 \in V$, whence we find from \eqref{eq:PhiIso} that 
 $\eta_1 = \eta_2 \in \HS[4+\beta]{\frac{1+\beta}{4}}(\Sph^1 \times [0,T])$. From our assumption we know that these functions actually lie in the space $\HS[4+\alpha]{\frac{1+\alpha}{4}}$, thus they coincide in this space, showing $\tau \geq T > 0$.\\
To show that $\tau = T_1$ we assume that $\tau < T_1$ for a moment. Then $u_1(\tau) = u_2(\tau) \eqdef v_0 \in \CalC^{4,\alpha}(\Sph^1)$ by the definition of $\tau$. 
We can again choose a unique $\tilde u \in \HS[4+\alpha]{\frac{1+\alpha}{4}}(\Sph^1 \times [\tau,T_1 - \tau])$ such that $u \defeq \tilde u(\cdot, \cdot - \tau)$ solves the linear equation
\eqref{eq:linear_soln_tilde_u} with initial value $v_0 \in \CalC^{4,\alpha}(\Sph^1 )$. As before, after choosing some $T>0$ small enough we apply \eqref{eq:PhiIso} to find $\eta_1 = \eta_2$ on $[0,T]$, where $\eta_i(x,t) = u_i(x,t+\tau) - \tilde u(x,t) \in \HS[4+ \beta]{\frac{4+\beta}{4}}(\Sph^1 \times [0,T_1-\tau])$.  
Whence we find that $u_1 (t) = u_2(t)$ for $\tau \leq t \leq \tau + T$, contradicting the assumption. This shows that $\tau = T_1$, which finishes the proof.
\end{proof}

\subsubsection{Proof of \autoref{thm:parabolic_smoothing} }
We finish this section with a proof of the parabolic smoothing.
\label{proof:ProofOfSmoothing}
\begin{proof}[{Proof of \autoref{thm:parabolic_smoothing}}]
Let $u\in \HS[4+\alpha]{\frac{4+\alpha}{4}}(\Sph^1\times [0,\varepsilon])$ be the solution of \eqref{eq:schauder_nonlinear} and $0 < \delta < \varepsilon$.\\
 \emph{First step: }Let $\eta_1\colon [0,\varepsilon] \to \R$ be a smooth cut-off function satisfying $\eta_1(t) = 0$ for all $0 \leq t \leq \frac{\delta}{4}$ and $\eta_1(t) = 1$ for all $t \geq \frac{\delta}{2}$. Then the function $w_1 \defeq \eta_1 u \in 
 \HS[4+\alpha]{\frac{4+\alpha}{4}}$ satisfies $w_1(\cdot,0) = 0 \in \CalC^{\infty}(\Sph^1)$ and
 \begin{align*}
  \dot{w_1} &= \dot u \eta_1 + \dot{\eta_1}u = \mathbf{F}[u] \eta_1 + \dot{\eta_1}u\\
  &= - \frac{f_2^4}{|\partial_xf|^4_e}(\partial_x^4 u)\eta_1 +P(\cdot,u,\partial_x u, \partial_x^2 u, \partial_x^3 u,|\partial_xf|_e^{-1})\eta_1 + \dot{\eta_1}u\\
  &=  - \frac{f_2^4}{|\partial_xf|^4_e}\partial_x^4 (u\eta_1) +P(\cdot,u,\partial_x u, \partial_x^2 u, \partial_x^3 u,|\partial_xf|_e^{-1})\eta_1 + \dot{\eta_1}u \eqdef \tilde \alpha(\cdot,u,\partial_xu) \partial_x^4 w_1 + \tilde f_1.
 \end{align*}
 Thus $w_1$ satisfies a linear, parabolic PDE whose coefficients satisfy $\tilde \alpha(\cdot,u,\partial_xu) \in \HS[3+\alpha]{\frac{3+\alpha}{4}} \subset \HS[1+\alpha]{\frac{1+\alpha}{4}}$,  $ P(\cdot,u,\partial_x u, \partial_x^2 u, \partial_x^3 u,|\partial_xf|_e^{-1})\eta_1 \in  \HS[1+\alpha]{\frac{1+\alpha}{4}}$, $\dot{\eta_1}u\in \HS[4+\alpha]{\frac{4+\alpha}{4}} \subset \HS[1+\alpha]{\frac{1+\alpha}{4}}$ and whence
$\tilde f_1  = P\eta_1 + \dot \eta_1 u \in \HS[1+\alpha]{\frac{1+\alpha}{4}}$. 
Thus, by \autoref{thm:linearIsoSchauderM} with $s = 1 + \alpha$, we find that $w = w_1$ is the unique solution $w \in \HS[4+1+\alpha]{\frac{4+1+\alpha}{4}}(\Sph^1 \times [0,\varepsilon])$ to the equation
\begin{equation*}
  \left \{
\begin{array}{rl}
  \partial_t w - \tilde \alpha \partial_x^4 w &= \tilde f_1,\\
 w(0,\cdot) &= 0.
\end{array}
\right.
\end{equation*}
Thus, by the definition of $w_1$ we find 
$u \in
 \HS[5+\alpha]{\frac{5+\alpha}{4}}(\Sph^1\times [\tfrac{1}{2}\delta, \varepsilon] ).$\\
\emph{Second step: }We need to modify 
this argument 
since $u$ (and thus the new function $\tilde f_2$) does not have enough regularity up to $t=0$. Thus we need to shift the problem and consider an initial value at $\frac{1}{2}\delta$.\\
Let $\eta_2\colon [0,\varepsilon] \to \R$ be a smooth cut-off function satisfying $\eta_2(t) = 0$ for all $0 \leq t \leq \frac{2}{3}\delta$ and $\eta_2(t) = 1$ for all $t \geq \frac{3}{4}\delta$. Let $w_2 \defeq \eta_2 u$. Then, by the previous step, $w_2 \in 
 \HS[5+\alpha]{\frac{5+\alpha}{4}}(\Sph^1\times [\tfrac{1}{2}\delta, \varepsilon])$ satisfies $ w_2(\cdot,\tfrac{\delta}{2}) = 0$ and
 \begin{align*}
 \dot{w_2} &= \dot u \eta_2+ \dot{\eta_2}u = \mathbf{F}[u] \eta_1 + \dot{\eta_2}u\\
  &= - \frac{f_2^4}{|\partial_xf|^4_e}(\partial_x^4 u)\eta_2 +P(\cdot,u,\partial_x u, \partial_x^2 u, \partial_x^3 u,|\partial_xf|_e^{-1})\eta_2 + \dot{\eta_2}u\\
  &=  - \frac{f_2^4}{|\partial_xf|^4_e}\partial_x^4 (u\eta_2) +P(\cdot,u,\partial_x u, \partial_x^2 u, \partial_x^3 u,|\partial_xf|_e^{-1})\eta_2 + \dot{\eta_2}u \eqdef \tilde \alpha(\cdot,u,\partial_xu) \partial_x^4 w_2 + \tilde f_2.
 \end{align*}
 Thus $w_2$ satisfies a linear, parabolic PDE whose coefficients satisfy
 $\tilde \alpha(\cdot,u,\partial_xu) \in \HS[4+\alpha]{\frac{4+\alpha}{4}}( \Sph^1\times[\tfrac{1}{2}\delta, \varepsilon]) \subset \HS[2+\alpha]{\frac{2+\alpha}{4}}( \Sph^1\times[\tfrac{1}{2}\delta, \varepsilon])$, $P(\cdot,u,\partial_x u, \partial_x^2 u, \partial_x^3 u,|\partial_xf|_e^{-1})\eta_2 \in \HS[2+\alpha]{\frac{2+\alpha}{4}}( \Sph^1\times[\tfrac{1}{2}\delta, \varepsilon]) ,$ $\dot{\eta_2}u\in \HS[5+\alpha]{\frac{5+\alpha}{4}}( \Sph^1\times[\tfrac{1}{2}\delta, \varepsilon])  \subset \HS[2+\alpha]{\frac{2+\alpha}{4}}( \Sph^1\times[\tfrac{1}{2}\delta, \varepsilon]) $ and whence $\tilde f_2 \in \HS[2+\alpha]{\frac{2+\alpha}{4}}( \Sph^1\times[\tfrac{1}{2}\delta, \varepsilon])$. 
Thus, by \autoref{thm:linearIsoSchauderM} with $s=2+\alpha$, we find  that $w = w_2$ is the unique solution $w \in \HS[4+2+\alpha]{\frac{4+2+\alpha}{4}}(( \Sph^1\times[\tfrac{1}{2}\delta, \varepsilon]) )$ to the equation
\begin{equation*}
  \left \{
\begin{array}{rl}
  \partial_t w - \tilde \alpha \partial_x^4 w &= \tilde f_2,\\
 w(\tfrac12\delta,\cdot) &= 0. 
\end{array}
\right.
\end{equation*}
Thus, by the definition of $w_2$ we find 
$u \in
 \HS[6+\alpha]{\frac{6+\alpha}{4}}( \Sph^1\times [\tfrac{3}{4}\delta, \varepsilon]).$\\
\emph{Third step: }
We successively get
 $u \in
 \HS[n+4+\alpha]{\frac{n+4+\alpha}{4}}(\Sph^1\times[\tfrac{2n-1}{2n}\delta, \varepsilon] ) 
$ 
for all $n \in \N$, whence
\[
 u \in
\bigcap_{n \in \N} \HS[n+4+\alpha]{\frac{n+4+\alpha}{4}}(\Sph^1\times[\tfrac{2n-1}{2n}\delta, \varepsilon]) \subset \CalC^\infty(\Sph^1\times[\delta,\varepsilon]).\qedhere
\]
\end{proof}

\section{Details of interpolation inequalities}
\label{sec:Inter}

Instead of using directly the interpolation inequalities as given in \cite[3.70]{Aubin} we choose here to give the main steps of the derivation in order to keep track of the constants. A detailed proof of the interpolation inequalities in $\R^n$ with respect to $\diff s$ has been given in \cite[App.C]{AnnaPaola1} and we refer partially to those computations. We choose here to give the results for general normal vector fields.

\subsection{\texorpdfstring{$L^p$ as interpolation between $W^{1,2}$ and $L^2$}{Lp as interpolation between Sobolev spaces and L2}.}

In the next lemma we give the main steps of the proof that $L^p$ is the result of an interpolation between $W^{1,2}$ and $L^2$ in a one-dimensional interval. This is \cite[Thm.5.8]{Adams}. We repeat here the main ideas to see how the constant depends on the length.
 
\begin{lemma}\label{lem:interlem1}
Let $L>0$, $p \in [2,\infty]$ and $a=1/2-1/p$ (and $a=1/2$ if $p=\infty$). Then there exists a constant $c$ depending only on $p$ and $1/L$ such that for any smooth function $h:[0,L] \to \R$
$$ \| h \|_{L^p(0,L)} \leq c \| h \|^a_{W^{1,2}(0,L)}\| h \|^{1-a}_{L^2(0,L)}  \, . $$
\end{lemma}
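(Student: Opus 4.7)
My plan is to establish the endpoint case $p=\infty$ first and then obtain every intermediate exponent by a straightforward H\"older-type split, without needing to invoke more abstract interpolation machinery.

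For $p=\infty$, by the mean value theorem for integrals there exists a point $x_0\in[0,L]$ with $h(x_0)^2=\frac{1}{L}\|h\|_{L^2(0,L)}^2$. For an arbitrary $x\in[0,L]$ the identity
\begin{equation*}
h(x)^2 = h(x_0)^2 + 2\int_{x_0}^{x} h(y)\, h'(y)\diff y
\end{equation*}
combined with Cauchy-Schwarz yields
\begin{equation*}
\|h\|_{L^\infty(0,L)}^2 \leq \tfrac{1}{L}\|h\|_{L^2(0,L)}^2 + 2\|h\|_{L^2(0,L)}\|h'\|_{L^2(0,L)}.
\end{equation*}
Using $\|h\|_{L^2},\|h'\|_{L^2}\leq\|h\|_{W^{1,2}}$, both summands are dominated by $(1+\tfrac{1}{L})\|h\|_{W^{1,2}(0,L)}\|h\|_{L^2(0,L)}$, so taking a square root gives the desired inequality for $p=\infty$ with a constant $c=\sqrt{1+1/L}\cdot\sqrt{2}$ depending only on $1/L$.

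For $p\in[2,\infty)$ I would extract $p-2$ copies of $|h|$ through the supremum norm and estimate
\begin{equation*}
\|h\|_{L^p(0,L)}^p \leq \|h\|_{L^\infty(0,L)}^{p-2}\|h\|_{L^2(0,L)}^2,
\end{equation*}
then take $p$-th roots and insert the $L^\infty$-bound obtained in the first step. A short arithmetic of exponents shows that $\|h\|_{W^{1,2}}$ appears with power $(1-2/p)/2=1/2-1/p=a$ and $\|h\|_{L^2}$ with power $(1-2/p)/2+2/p=1/2+1/p=1-a$, exactly as required.

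The only mild subtlety is that the constant necessarily depends on $1/L$ rather than on $L$: the term $\frac{1}{L}\|h\|_{L^2}^2$ arising from the choice of $x_0$ blows up as $L\searrow 0$, which is unavoidable and explains the formulation of the statement. Beyond this bookkeeping I do not expect a genuine technical obstacle.
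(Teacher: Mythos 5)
Your proof is correct, and it takes a genuinely different route from the paper. The paper follows the scheme of Adams' Theorem 5.8: it starts from the local averaging bound $|h(x)| \leq \frac{1}{r}\int_{C_{x,r}}|h| + \int_{C_{x,r}}|h'|$, rewrites it as a convolution with $\chi_{(-r,r)}$, applies Young's convolution inequality to pass directly to the $L^p$-norm, and then optimises over the radius $r$ (taking $r=\|h\|_{L^2}/\|h\|_{W^{1,2}}$ when this is admissible and $r=L/3$ otherwise, which is exactly where the $1/L$-dependence of the constant enters). You instead prove the one-dimensional Agmon-type endpoint estimate $\|h\|_{L^\infty}^2 \leq \frac{1}{L}\|h\|_{L^2}^2 + 2\|h\|_{L^2}\|h'\|_{L^2}$ via the fundamental theorem of calculus from a point $x_0$ where $h^2$ attains its mean, and then obtain all $p\in[2,\infty)$ from the trivial bound $\|h\|_{L^p}^p\leq\|h\|_{L^\infty}^{p-2}\|h\|_{L^2}^2$; the exponent bookkeeping $(1-2/p)/2=a$ is correct, and your constant depends only on $p$ and $1/L$, as required (for $p=2$ it collapses to the identity, consistently with $a=0$). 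Your argument is more elementary and self-contained, with fully explicit constants, and is arguably preferable in this one-dimensional setting; the paper's convolution argument is the one that survives in higher dimensions, where the $L^\infty$ endpoint is unavailable, and it stays closer to the reference the authors cite. Both achieve the essential point of the lemma, namely that the constant degenerates only as $L\searrow 0$.
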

\begin{proof}
By \cite[Lem.4.15]{Adams} for all $x \in [0,L]$ and $r < L/2$ 
$$ |h(x)| \leq \frac{1}{r} \int_{C_{x,r}} |h(y)| \diff y + \int_{C_{x,r}} |h'(y)| \diff y \, ,$$
with $C_{x,r}=[x,x+r]$  if $x \leq L/2$ and $C_{x,r}=[x-r,x]$ otherwise. Denoting by $\chi_{I}$ the characteristic function of a generic interval $I$ we can further estimate $|h(x)|$ as follows
 $$ |h(x)| \leq \frac{1}{r} \chi_{(-r,r)} * (|h|\chi_{[0,L]})(y) + \chi_{(-r,r)} * (|h'|\chi_{[0,L]})(y) ,$$
and by Young's inequality (with $1+1/p=1/2+ (1/2+1/p)$) we find
\begin{align} \nonumber
 \|h\|_{L^p(0,L)} & \leq c(p) (2r)^{\frac12+\frac1p} (  \frac{1}{r} \|h\|_{L^2(0,L)} + \|h'\|_{L^2(0,L)})  \\ \label{eq:ada1}
& \leq 2 c(p) ( r^{\frac12+\frac1p-1} \|h\|_{L^2(0,L)} +  r^{\frac12+\frac1p} \|h\|_{W^{1,2}(0,L)}) \, .  
\end{align}
For $r=\bar{r}:=\|h\|_{L^2(0,L)}/ \|h\|_{W^{1,2}(0,L)}$ the two terms on the right hand side are equal. Hence if $\bar{r} \leq L/3$ we choose $r=\bar{r}$ and the claim follows. Otherwise we take $r=L/3$ in \eqref{eq:ada1} and using that 
$$ \frac{L}{3} \leq \frac{\|h\|_{L^2(0,L)}}{\|h\|_{W^{1,2}(0,L)}} \leq 1, $$
we find
\begin{align*} 
 \|h\|_{L^p(0,L)} 
& \leq 2 c(p) ( (\frac{L}{3})^{\frac12+\frac1p-1} \|h\|^{1-a}_{L^2(0,L)} \|h\|^{a}_{W^{1,2}(0,L)} +  \Big(\frac{\|h\|_{L^2(0,L)}}{\|h\|_{W^{1,2}(0,L)}}\Big)^{\frac12+\frac1p} \|h\|_{W^{1,2}(0,L)}) \\
& \leq c(p,\frac{1}{L})\| h \|^a_{W^{1,2}(0,L)}\| h \|^{1-a}_{L^2(0,L)}  \, , 
\end{align*}
since $1/2+1/p\leq 1$.
\end{proof}

Now the previous result for functions in $\mathbb{S}^1$.

\begin{lemma}\label{lem:interlem2}
Under the assumptions of Proposition \ref{prop:inter1} there exists a constant $c$ depending only on $p$ and $1/L$ such that for any smooth function $h:(\mathbb{S}^1,\diff s) \to \R$
$$ \| h \|_{L^p(\mathbb{S}^1)} \leq c \| h \|^a_{W^{1,2}(\mathbb{S}^1)}\| h \|^{1-a}_{L^2(\mathbb{S}^1)}  \, , $$
with $a=1/2-1/p$ (and $a=1/2$ if $p=\infty$).
\end{lemma}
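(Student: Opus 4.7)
The plan is to reduce the statement on $\mathbb{S}^1$ to the already established interval statement (Lemma \ref{lem:interlem1}) by exploiting the chart structure introduced in \eqref{eq:charts}. Recall that the two charts $\phi_1, \phi_3 \colon I = (0,L/2) \to \mathbb{S}^1$ are isometries onto the disjoint open half-circles $V_1 = \mathbb{S}^1 \cap \{x>0\}$ and $V_3 = \mathbb{S}^1 \cap \{x<0\}$, whose union is $\mathbb{S}^1$ up to two points of measure zero. Since the charts are isometries, pullback via $\phi_i$ preserves $L^q$-norms and the $W^{1,2}$-norm for any $q$, i.e.
\[
\|h \circ \phi_i\|_{L^q(0,L/2)} = \|h\|_{L^q(V_i)}, \quad \|h \circ \phi_i\|_{W^{1,2}(0,L/2)} = \|h\|_{W^{1,2}(V_i)}.
\]

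For $p \in [2,\infty)$ I would first write
\[
\|h\|_{L^p(\mathbb{S}^1)}^p = \|h\|_{L^p(V_1)}^p + \|h\|_{L^p(V_3)}^p = \sum_{i \in \{1,3\}} \|h \circ \phi_i\|_{L^p(0,L/2)}^p,
\]
then apply Lemma \ref{lem:interlem1} on the interval $(0,L/2)$ to each summand and bound the local norms by the global ones on $\mathbb{S}^1$ (which is legitimate since $V_i \subset \mathbb{S}^1$), obtaining a constant depending only on $p$ and $1/L$ (note $1/(L/2) = 2/L$ still fits the dependence). Combining the two summands and taking $p$-th roots absorbs the factor $2^{1/p}$ into the constant.

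For $p = \infty$, the disjoint union still gives $\|h\|_{L^\infty(\mathbb{S}^1)} = \max_{i\in\{1,3\}} \|h \circ \phi_i\|_{L^\infty(0,L/2)}$ for continuous $h$, since the two missing points lie in $\overline{V_1}\cap\overline{V_3}$ and are captured by continuity. One application of Lemma \ref{lem:interlem1} with $p=\infty$ to each chart then concludes.

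There is no substantial obstacle here: the only thing that needs care is the bookkeeping of constants, ensuring that they depend solely on $p$ and $1/L$ as claimed. This is automatic because Lemma \ref{lem:interlem1} on $(0, L/2)$ yields a constant depending on $p$ and $2/L$, and the combinatorial factor from the two charts is a universal constant.
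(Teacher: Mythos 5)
Your argument is correct, and it reaches the result by a slightly different localization than the paper. The paper covers $\mathbb{S}^1$ by all four isometric charts of \eqref{eq:charts} and works with a partition of unity $\chi_1+\dots+\chi_4=1$ satisfying $\|\partial_s\chi_i\|_\infty\leq c_1/L$, applying Lemma \ref{lem:interlem1} to the cut-off pullbacks $(\chi_i h)\circ\phi_i$; this costs an extra product-rule estimate $\|\partial_x((\chi_i h)\circ\phi_i)\|_{L^2}^2\leq 2(c_1^2/L^2+1)\|h\|^2_{W^{1,2}(\mathbb{S}^1)}$ and, for $p=\infty$, the observation that $\max_i\chi_i\geq 1/4$. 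You instead restrict $h$ directly to the two disjoint half-circles $V_1,V_3$ and use that $\|h\|_{L^p(\mathbb{S}^1)}^p$ is exactly additive over them; this is legitimate because Lemma \ref{lem:interlem1} imposes no support or boundary condition on the interval function, and $h\circ\phi_i$ extends smoothly to the closed interval $[0,L/2]$ since $h$ is smooth on all of $\mathbb{S}^1$. Your version avoids the cut-off derivative term and keeps the same dependence of the constant on $p$ and $1/L$ (via $2/L$); the paper's partition-of-unity template is the more robust pattern when the local inequality does require compactly supported data, but that robustness is not needed here. Both routes rest on the same key lemma, so the difference is one of bookkeeping rather than substance.
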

\begin{proof}
Let $\phi_i:(0,L/2) \to (\mathbb{S}^1,\diff s)$ for $i=1,\ldots,4$ be the isometric charts as defined in \eqref{eq:charts}. Consider $\chi_i$, $i=1,\ldots,4$, be an associated partition of unity on $\mathbb{S}^1$ such that $\| \partial_s \chi\|_{\infty} \leq c_1/L$, $i=1,\ldots,4$. Then $(\chi_i h)\circ \phi_i : (0,L/2) \to \R$ and  we have for $p \in [2,\infty)$ and $i =1,\ldots,4$
\begin{align*}
\| (\chi_i h)\circ \phi_i \|_{L^p(0,L/2)}^p & \leq \int_0^{\frac{L}{2}} |h \circ \phi_i(x)|^p \diff x 
=   \int_{\mathbb{S}^1 \cap V_i} |h (s)|^p \diff s \leq \| h \|_{L^p(\mathbb{S}^1)}^p
\end{align*}
since $\phi_i$ is an isometry. For $p=\infty$ we clearly have $\| (\chi_i h)\circ \phi_i \|_{L^{\infty}(0,L/2)}  \leq \| h \|_{L^{\infty}(\mathbb{S}^1)}$. Similarly, 
\allowdisplaybreaks{\begin{align*}
\| \partial_x((\chi_i h)\circ \phi_i) \|_{L^2(0,L/2)}^2 & \leq 2 \int_0^{\frac{L}{2}} ( |h \circ \phi_i|^2 |\partial_x (\chi_i \circ \phi_i)|^2 + |(\chi_i \circ \phi_i)|^2| \partial_x (h \circ \phi_i)|^2) \diff x \\
& \leq 2 (\frac{c_1^2}{L^2} +1)\int_0^{\frac{L}{2}} ( |h \circ \phi_i(x)|^2 + | \partial_x (h \circ \phi_i)(x)|^2) \diff x \\
& \leq 2 (\frac{c_1^2}{L^2} +1) \int_{\mathbb{S}^1 \cap V_i}( |h (s)|^2 + | \partial_s h |^2) \diff s 
\\&
\leq 2 (\frac{c_1^2}{L^2} +1) \| h\|_{W^{1,2}(\mathbb{S}^1)}^2 \, .
\end{align*}}
Hence we conclude for $p \in [2,\infty)$ using Lemma \ref{lem:interlem1} and the previous estimates that
\begin{align*}
\| h\|_{L^p(\mathbb{S}^1)} & \leq \sum_{i=1}^4 \| \chi_i h\|_{L^p(\mathbb{S}^1)} = \sum_{i=1}^4 (\int_0^{\frac{L}{2}} |((\chi_i h)\circ \phi_i)|^p  \diff x )^{\frac{1}{p}} \\
& \leq \sum_{i=1}^4 c_2(p,\frac{1}{L}) \| (\chi_i h)\circ \phi_i\|^a_{W^{1,2}(0,L)}\| (\chi_i h)\circ \phi_i \|^{1-a}_{L^2(0,L)} \\
& \leq c_3(p,\frac{1}{L}) \| h\|^a_{W^{1,2}(\mathbb{S}^1)}\| h \|^{1-a}_{L^2(\mathbb{S}^1)} \, ,
\end{align*}
with $a=1/2-1/p$. For $p=\infty$ we first observe that for all $x \in \mathbb{S}^1$ there exists an $i \in \{1, ..,4\}$ such that $\chi_i(x)\geq 1/4$, hence
\begin{align*}
\| h\|_{L^{\infty}(\mathbb{S}^1)} & \leq 4 \max_{i=1,..,4} \| \chi_i h\|_{L^{\infty}(\mathbb{S}^1)} = 4 \max_{i=1,..,4} \| (\chi_i h) \circ \phi_i\|_{L^{\infty}(0,L/2)}  \\
& \leq 4\max_{i=1,..,4} c_4 \| (\chi_i h)\circ \phi_i\|^a_{W^{1,2}(0,L)}\| (\chi_i h)\circ \phi_i \|^{1-a}_{L^2(0,L)} \, ,
\end{align*}
and the claim follows as above.
\end{proof}

Here we give the precise statement that $L^p$ is an interpolation between $W^{1,2}$ and $L^2$ for normal  vector fields. 
\begin{prop}\label{prop:inter1}
Let $f:\mathbb{S}^1 \to \Hyp^2 $ be a smooth immersion such that $\int_{\mathbb{S}^1} \diff s=L>0$ with $\diff s = |\partial_x f|_g \diff x$. Then for any $p \in [2,\infty]$ there exists a constant $C$ depending only on $p$ and $\frac{1}{L}$ such that for any smooth {normal} vector field $\Phi:(\mathbb{S}^1,\diff s)  \to TM$ we have
$$ \| \Phi \|_{L^p(\mathbb{S}^1)} \leq c \| \Phi \|^a_{W^{1,2}(\mathbb{S}^1)}\| \Phi \|^{1-a}_{L^2(\mathbb{S}^1)}  \, , $$
with $a=1/2-1/p$ (and $a=1/2$ if $p=\infty$).
\end{prop}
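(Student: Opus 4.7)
The strategy is to reduce Proposition \ref{prop:inter1} to the scalar interpolation of Lemma \ref{lem:interlem2}, applied to the function $h := |\Phi|_g \colon \mathbb{S}^1 \to \R$. By the very definition of the vector-valued norms one has $\|\Phi\|_{L^p(\mathbb{S}^1)} = \||\Phi|_g\|_{L^p(\mathbb{S}^1)}$ for every $p \in [2,\infty]$, so it is enough to bound $\||\Phi|_g\|_{W^{1,2}(\mathbb{S}^1)}$ by $\|\Phi\|_{W^{1,2}(\mathbb{S}^1)}$ up to a universal constant and then feed this estimate into Lemma \ref{lem:interlem2}.

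The key ingredient is a Kato-type inequality $|\partial_s |\Phi|_g| \leq |\nabla_{\partial_s}^\bot \Phi|_g$. Since $\Phi$ is normal to $f$, formula \eqref{eq:NormalDerOfNormal} gives $\langle \nabla_{\partial_s}\Phi, \Phi\rangle_g = \langle \nabla_{\partial_s}^\bot \Phi, \Phi\rangle_g$; combining this with compatibility of the metric \eqref{eq:compcurves} yields $\partial_s |\Phi|_g^2 = 2 \langle \nabla_{\partial_s}^\bot \Phi, \Phi\rangle_g$, so dividing by $|\Phi|_g$ and applying Cauchy--Schwarz produces the pointwise bound wherever $|\Phi|_g > 0$.

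To deal with the zero set of $\Phi$, where $|\Phi|_g$ is only Lipschitz, I would first work with the smooth regularisation $h_\epsilon := \sqrt{|\Phi|_g^2 + \epsilon^2}$ for $\epsilon > 0$. The identity $\partial_s h_\epsilon = h_\epsilon^{-1} \langle \nabla_{\partial_s}^\bot \Phi, \Phi\rangle_g$ together with Cauchy--Schwarz give the uniform pointwise estimate $|\partial_s h_\epsilon| \leq |\nabla_{\partial_s}^\bot \Phi|_g$. Applying Lemma \ref{lem:interlem2} to the smooth function $h_\epsilon$ then yields
\[
  \|h_\epsilon\|_{L^p(\mathbb{S}^1)} \leq c\bigl(p, \tfrac{1}{L}\bigr) \|h_\epsilon\|_{W^{1,2}(\mathbb{S}^1)}^a \|h_\epsilon\|_{L^2(\mathbb{S}^1)}^{1-a},
\]
and letting $\epsilon \searrow 0$ via dominated convergence (using $h_\epsilon \to |\Phi|_g$ uniformly on $\mathbb{S}^1$ together with the uniform bound on $|\partial_s h_\epsilon|$) produces the claimed inequality.

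I expect no serious difficulty in carrying this out: the only real point where care is needed is the regularisation used to make the Kato-type inequality rigorous on the zero set of $\Phi$. Every other ingredient — the scalar interpolation, the chart construction from Lemma \ref{lem:interlem2}, the isometry of the coordinate patches \eqref{eq:charts}, and the explicit dependence of the constant on $1/L$ — can simply be quoted from the preceding lemmas.
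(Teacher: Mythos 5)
Your proposal is correct and follows essentially the same route as the paper: reduce to the scalar interpolation of Lemma \ref{lem:interlem2} applied to $h=|\Phi|_g$ via the Kato-type inequality $|\partial_s|\Phi|_g|\leq|\nabla_{\partial_s}^\bot\Phi|_g$, which uses the normality of $\Phi$ exactly as you describe. The only difference is that the paper handles the zero set of $\Phi$ with an unspecified ``approximation argument'', whereas you make it precise with the regularisation $h_\epsilon=\sqrt{|\Phi|_g^2+\epsilon^2}$ and the uniform bound $|\partial_s h_\epsilon|\leq|\nabla_{\partial_s}^\bot\Phi|_g$ --- a welcome elaboration rather than a divergence.
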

\begin{proof} If $\Phi \ne 0$ on $\mathbb{S}^1$ then $| \Phi|_g$ is a smooth function on $\mathbb{S}^1$ and by Lemma \ref{lem:interlem2} we find
$$ \| \Phi \|_{L^p(\mathbb{S}^1)} = \| | \Phi|_g \|_{L^p(\mathbb{S}^1)} \leq c \| | \Phi|_g \|^a_{W^{1,2}(\mathbb{S}^1)}\| | \Phi|_g \|^{1-a}_{L^2(\mathbb{S}^1)}  \, , $$
and the claim follows in this case since $\| | \Phi|_g \|_{L^2(\mathbb{S}^1)}  = \|  \Phi \|_{L^2(\mathbb{S}^1)}$ and being $\Phi$ normal
\begin{align*}
|\partial_s| \Phi|_g | & = \frac{|\partial_x | \Phi|_g|}{|\partial_x f|_g} 
= \frac{|\langle \Phi, \nabla_{\partial_x} \Phi\rangle_g|}{| \Phi|_g|\partial_x f|_g} 
= \frac{|\langle \Phi, \nabla_{\partial_x}^\bot \Phi\rangle_g|}{| \Phi|_g|\partial_x f|_g}\leq
\frac{| \nabla_{\partial_x}^\bot \Phi|_g}{|\partial_x f|_g} = | \nabla_{\partial_s}^\bot \Phi|_g
\end{align*} 
and hence $\| | \Phi|_g \|_{W^{1,2}(\mathbb{S}^1)}  \leq \|  \Phi \|_{W^{1,2}(\mathbb{S}^1)}$. 
If $\Phi=0$ somewhere then we get back to the previous case with an approximation argument.
\end{proof}

\subsection{The general interpolation inequality}

\begin{lemma}\label{lem:lemC5}
Consider the same assumptions of Proposition \ref{prop:inter1}. Let $\Phi: (\mathbb{S}^1, \diff s) \to T \Hyp^2$ be a smooth {normal} vector field. Then for any $k\geq 2$, $k \in \N$ and $0<i<k$ there exists a constant $c$ depending only on $i$ and $k$ such that for any $\varepsilon \in (0,1)$
$$ \| (\nabla_{\partial_s}^\bot)^i \Phi \|_{L^2(\mathbb{S}^1)} \leq c ( \varepsilon \| \Phi \|_{W^{k,2}(\mathbb{S}^1)} + \varepsilon^{\frac{i}{i-k}}\| \Phi \|_{L^2(\mathbb{S}^1)})  \, , $$
and for $0 \leq i <k$
$$ \| (\nabla_{\partial_s}^\bot)^i \Phi \|_{L^2(\mathbb{S}^1)} \leq c \| \Phi \|^{\frac{i}{k}}_{W^{k,2}(\mathbb{S}^1)} \| \Phi \|^{\frac{k-i}{k}}_{L^2(\mathbb{S}^1)}  \, . $$
\end{lemma}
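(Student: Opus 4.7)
The plan is to use the standard log-convexity argument for the $L^2$-norms of iterated normal derivatives, which works cleanly on $\mathbb{S}^1$ because there are no boundary terms.

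\textbf{Step 1 (base inequality).} First I will establish that for every smooth normal vector field $N$ along $f$,
\[
\|\nabla_{\partial_s}^\bot N\|_{L^2(\mathbb{S}^1)}^2 \;\leq\; \|N\|_{L^2(\mathbb{S}^1)}\,\|(\nabla_{\partial_s}^\bot)^2 N\|_{L^2(\mathbb{S}^1)}.
\]
Indeed, since $\nabla_{\partial_s}^\bot N$ is itself normal to $f$, the compatibility formula \eqref{eq:PartialIntegration} applied to $\langle N,\nabla_{\partial_s}^\bot N\rangle_g$ (together with the fact that for normal vectors $X,Y$ one has $\langle\nabla_{\partial_s}X,Y\rangle_g=\langle\nabla_{\partial_s}^\bot X,Y\rangle_g$) together with the absence of boundary on $\mathbb{S}^1$ yields
\[
\int_{\mathbb{S}^1} |\nabla_{\partial_s}^\bot N|_g^2\,\diff s \;=\; -\int_{\mathbb{S}^1} \langle N,(\nabla_{\partial_s}^\bot)^2 N\rangle_g\,\diff s,
\]
and Cauchy--Schwarz gives the claim.

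\textbf{Step 2 (log-convexity).} Applying Step 1 to $N=(\nabla_{\partial_s}^\bot)^{j-1}\Phi$ and setting $a_j\defeq\|(\nabla_{\partial_s}^\bot)^j\Phi\|_{L^2(\mathbb{S}^1)}$, I obtain for every $1\leq j\leq k-1$
\[
a_j^2 \leq a_{j-1}\,a_{j+1}.
\]
Thus $j\mapsto\log a_j$ is convex on $\{0,1,\dots,k\}$. A direct induction (or the discrete three-term convexity relation) then shows
\[
a_i \leq a_0^{\frac{k-i}{k}}\,a_k^{\frac{i}{k}}\qquad\text{for all }0\leq i\leq k.
\]
Using $a_0=\|\Phi\|_{L^2(\mathbb{S}^1)}$ and the trivial bound $a_k\leq \|\Phi\|_{W^{k,2}(\mathbb{S}^1)}$ immediately gives the second inequality of the lemma with some constant $c=c(i,k)$ (in fact $c=1$ in this formulation).

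\textbf{Step 3 (the $\varepsilon$-version).} To obtain the first inequality I apply Young's inequality $xy\leq \tfrac{(\delta x)^p}{p}+\tfrac{(y/\delta)^q}{q}$ with conjugate exponents $p=k/i$, $q=k/(k-i)$, to $x=a_k$ and $y=a_0$ (raised to appropriate powers). Choosing $\delta$ so that $\delta^p/p=\varepsilon$ converts the product $a_k^{i/k}a_0^{(k-i)/k}$ into $\varepsilon\,a_k + C\varepsilon^{-(k-i)/i}\,a_0 = \varepsilon\,a_k + C\varepsilon^{\frac{i}{i-k}}\,a_0$, which is exactly the claimed form after again bounding $a_k\leq\|\Phi\|_{W^{k,2}(\mathbb{S}^1)}$.

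No step presents a real obstacle; the only thing to be careful about is Step 1, where one must invoke the normality of both $N$ and $\nabla_{\partial_s}^\bot N$ to pass from $\nabla_{\partial_s}$ to $\nabla_{\partial_s}^\bot$ under the integral, and remember that $\mathbb{S}^1$ has no boundary so the integration by parts produces no correction terms. The rest is purely algebraic (log-convexity plus Young).
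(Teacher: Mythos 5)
Your proof is correct, and it rests on the same single analytic ingredient as the paper's proof -- the integration by parts
\[
\int_{\mathbb{S}^1}|\nabla_{\partial_s}^\bot N|_g^2\,\diff s=-\int_{\mathbb{S}^1}\langle N,(\nabla_{\partial_s}^\bot)^2N\rangle_g\,\diff s ,
\]
valid because $N$ and $\nabla_{\partial_s}^\bot N$ are normal and $\mathbb{S}^1$ has no boundary -- but you organize the argument in the reverse order. The paper proves the additive $\varepsilon$-inequality first (base case $k=2$, $i=1$ exactly as in your Step 1, followed by Cauchy--Schwarz and Young; the induction on $k$ is deferred to a citation of Dall'Acqua--Pozzi) and then obtains the multiplicative inequality by optimising over $\varepsilon$. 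You instead derive the multiplicative inequality first, via the three-term log-convexity $a_j^2\leq a_{j-1}a_{j+1}$ of the sequence $a_j=\|(\nabla_{\partial_s}^\bot)^j\Phi\|_{L^2}$, and then recover the $\varepsilon$-version by Young. Your route has the advantage of being fully self-contained (the discrete convexity induction replaces the external reference) and of making the constant in the multiplicative inequality explicit ($c=1$); one should only note that if $a_k=0$ the chain $a_{k-1}^2\leq a_{k-2}a_k$ forces all intermediate $a_j$ to vanish, so the interpolation inequality still holds in the degenerate case. One small slip: in Step 3 the intermediate exponent $\varepsilon^{-(k-i)/i}$ should read $\varepsilon^{-i/(k-i)}$ (since $q/p=i/(k-i)$ for $p=k/i$, $q=k/(k-i)$); your final expression $\varepsilon^{\frac{i}{i-k}}$ is nevertheless the correct one, so this is only a typographical error and not a gap.
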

\begin{proof}
The second inequality follows from the first choosing $\varepsilon$ such that the two terms on the right hand side are equal. Notice that the second inequality is trivially satisfied for $i=0$ taking simply $c\geq 1$.

It remains to prove the first inequality. Since $\mathbb{S}^1$ has no boundary and $\Phi$ is a normal vector field, using \eqref{eq:PartialIntegration} we find for $k=2$ and $i=1$
\begin{align*}
\| \nabla_{\partial_s}^\bot \Phi \|^2_{L^2(\mathbb{S}^1)} & = \int_{\mathbb{S}^1} \langle \nabla_{\partial_s} \Phi, \nabla_{\partial_s}^\bot \Phi \rangle_g \diff s = \int_{\mathbb{S}^1} ( \partial_s \langle  \Phi, \nabla_{\partial_s}^\bot \Phi \rangle_g - \langle  \Phi, (\nabla^\bot_{\partial_s})^2 \Phi \rangle_g) \diff s  \\
& \leq \| \Phi \|_{L^2(\mathbb{S}^1)} \| (\nabla^\bot_{\partial_s})^2 \Phi\|_{L^{2}(\mathbb{S}^1)}  \leq \| \Phi \|_{L^2(\mathbb{S}^1)} \| \Phi \|_{W^{2,2}(\mathbb{S}^1)} \\
& \leq \frac12 ( \varepsilon^2  \| \Phi \|^2_{W^{2,2}(\mathbb{S}^1)} + \varepsilon^{-2}  \| \Phi \|^2_{L^2(\mathbb{S}^1)} )  \leq \frac12 ( \varepsilon  \| \Phi \|_{W^{2,2}(\mathbb{S}^1)} + \varepsilon^{-1}  \| \Phi \|_{L^2(\mathbb{S}^1)} )^2 \, .
\end{align*} 
The rest of the proof is by induction and the details are as in \cite[Lem. C.5]{AnnaPaola1}.
\end{proof}

\begin{lemma}\label{lem:lemC6}
Assume the assumptions of Proposition \ref{prop:inter1}. Then for any $k \in \N$, $0\leq i<k$ and $p\in[2, \infty]$ there exists a constant $c$ depending only on $i$, $k$, $p$ and $1/L$  such that
$$ \| (\nabla_{\partial_s}^\bot)^i \Phi \|_{L^p(\mathbb{S}^1)} \leq c \| (\nabla_{\partial_s}^\bot)^i \Phi  \|_{W^{k-i,2}(\mathbb{S}^1)}^{\frac{1}{k-i}(\frac12 -\frac1p)}  \|(\nabla_{\partial_s}^\bot)^i \Phi \|_{L^2(\mathbb{S}^1)}^{1-\frac{1}{k-i}(\frac12 -\frac1p)} \,  . $$
Here $1/p:=0$ if $p=\infty$.
\end{lemma}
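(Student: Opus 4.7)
The idea is to set $\Psi \defeq (\nabla_{\partial_s}^\bot)^i \Phi$ and observe that $\Psi$ is itself a smooth normal vector field along $f$, so that the target inequality is simply the interpolation of $\|\Psi\|_{L^p}$ between $\|\Psi\|_{W^{k-i,2}}$ and $\|\Psi\|_{L^2}$ with interpolation parameter $\frac{1}{k-i}(\frac{1}{2} - \frac{1}{p})$. Thus the statement is just Proposition \ref{prop:inter1} together with an application of the previous Lemma \ref{lem:lemC5}, applied to $\Psi$ rather than $\Phi$.

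More precisely, the first step is to apply Proposition \ref{prop:inter1} to the normal vector field $\Psi$ to obtain
\[
\|\Psi\|_{L^p(\mathbb{S}^1)} \leq c\, \|\Psi\|_{W^{1,2}(\mathbb{S}^1)}^{a}\, \|\Psi\|_{L^2(\mathbb{S}^1)}^{1-a}, \qquad a = \tfrac{1}{2} - \tfrac{1}{p}
\]
(with $a = 1/2$ if $p = \infty$). If $k-i = 1$, then $a = \frac{1}{k-i}(\frac{1}{2}-\frac{1}{p})$ and this is already the desired estimate.

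If $k-i \geq 2$, the next step is to estimate $\|\Psi\|_{W^{1,2}(\mathbb{S}^1)}$ using Lemma \ref{lem:lemC5} applied with $\Psi$ in place of $\Phi$, with $m \defeq k-i \geq 2$ in place of $k$ and the index $1$ in place of $i$: this yields
\[
\|\nabla_{\partial_s}^\bot \Psi\|_{L^2(\mathbb{S}^1)} \leq c\, \|\Psi\|_{W^{k-i,2}(\mathbb{S}^1)}^{\frac{1}{k-i}}\, \|\Psi\|_{L^2(\mathbb{S}^1)}^{\frac{k-i-1}{k-i}}.
\]
Combined with the trivial bound $\|\Psi\|_{L^2} \leq \|\Psi\|_{W^{k-i,2}}^{1/(k-i)}\|\Psi\|_{L^2}^{(k-i-1)/(k-i)}$ (which holds because the $L^2$-norm is controlled by the $W^{k-i,2}$-norm) one gets $\|\Psi\|_{W^{1,2}} \leq c\,\|\Psi\|_{W^{k-i,2}}^{1/(k-i)}\|\Psi\|_{L^2}^{(k-i-1)/(k-i)}$.

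Inserting this into the first estimate and simplifying the exponent of $\|\Psi\|_{L^2}$ as $\frac{a(k-i-1)}{k-i} + (1-a) = 1 - \frac{a}{k-i}$ yields the claim. There is no real obstacle here: the argument is entirely formal, the only point worth noting is that $(\nabla_{\partial_s}^\bot)^i\Phi$ is normal by construction, so Proposition \ref{prop:inter1} applies to it.
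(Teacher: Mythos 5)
Your proof is correct and follows exactly the route the paper indicates: apply Proposition \ref{prop:inter1} to the normal field $\Psi=(\nabla_{\partial_s}^\bot)^i\Phi$ and then absorb $\|\Psi\|_{W^{1,2}}$ via Lemma \ref{lem:lemC5} (with $k-i$ in place of $k$), the paper merely deferring these details to the cited reference. The exponent bookkeeping and the observation that $\Psi$ is normal by construction are both handled correctly.
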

\begin{proof}
If $k-i=1$ this is Proposition \ref{prop:inter1}. Otherwise the estimate can be proved applying first Proposition \ref{prop:inter1} and then Lemma \ref{lem:lemC5}. The details are as in \cite[Lem. C.6]{AnnaPaola1}.
\end{proof}

We can finally prove Proposition \ref{prop:propsonntag}.

\begin{proof}[Proof of Proposition \ref{prop:propsonntag}]
If $i=0$ and $k=1$ this is Lemma \ref{lem:lemC6} (or Proposition \ref{prop:inter1}) with $\Phi=\Kapp$. If $k\geq 2$ and $0 \leq i <k$ with Lemma \ref{lem:lemC6} 
\begin{align*}
\| (\nabla_{\partial_s}^\bot)^i \Kapp \|_{L^p(\mathbb{S}^1)} & \leq c \| (\nabla_{\partial_s}^\bot)^i \Kapp  \|_{W^{k-i,2}(\mathbb{S}^1)}^{\frac{1}{k-i}(\frac12 -\frac1p)}  \|(\nabla_{\partial_s}^\bot)^i \Kapp \|_{L^2(\mathbb{S}^1)}^{1-\frac{1}{k-i}(\frac12 -\frac1p)} \\
& \leq c \|  \Kapp  \|_{W^{k,2}(\mathbb{S}^1)}^{\frac{1}{k-i}(\frac12 -\frac1p)}  \|(\nabla_{\partial_s}^\bot)^i \Kapp \|_{L^2(\mathbb{S}^1)}^{1-\frac{1}{k-i}(\frac12 -\frac1p)}
\end{align*}
with $c=c(i,k,p,1/L)$ and $1/p:=0$ if $p=\infty$. Then from the second statement in Lemma \ref{lem:lemC5} we find
$$  \|(\nabla_{\partial_s}^\bot)^i \Kapp \|_{L^2(\mathbb{S}^1)} \leq c \| \Kapp \|^{\frac{i}{k}}_{W^{k,2}(\mathbb{S}^1)} \| \Kapp \|^{\frac{k-i}{k}}_{L^2(\mathbb{S}^1)} ,$$ 
with $c=c(i,k,p,1/L)$. Combining these two inequalities the claim follows.
\end{proof}


\begin{thebibliography}{DFGS11}

\bibitem[AF03]{Adams}
Robert~A. Adams and John J.~F. Fournier.
\newblock {\em Sobolev spaces}, volume 140 of {\em Pure and Applied Mathematics
  (Amsterdam)}.
\newblock Elsevier/Academic Press, Amsterdam, second edition, 2003.

\bibitem[Aub82]{Aubin}
Thierry Aubin.
\newblock {\em Nonlinear analysis on manifolds. {M}onge-{A}mp\`ere equations},
  volume 252 of {\em Grundlehren der Mathematischen Wissenschaften [Fundamental
  Principles of Mathematical Sciences]}.
\newblock Springer-Verlag, New York, 1982.

\bibitem[BDF10]{MR2729304}
Matthias Bergner, Anna Dall'Acqua, and Steffen Fr\"ohlich.
\newblock Symmetric {W}illmore surfaces of revolution satisfying natural
  boundary conditions.
\newblock {\em Calc. Var. Partial Differential Equations}, 39(3-4):361--378,
  2010.

\bibitem[BG86]{BrGr}
Robert Bryant and Phillip Griffiths.
\newblock Reduction for constrained variational problems and {$\int k^2/2
  \,ds$}.
\newblock {\em Amer. J. Math.}, 108(3):525--570, 1986.

\bibitem[Bla09]{Blatt}
Simon Blatt.
\newblock A singular example for the {W}illmore flow.
\newblock {\em Analysis (Munich)}, 29(4):407--430, 2009.

\bibitem[dC76]{doCarmo1}
Manfredo~P. do~Carmo.
\newblock {\em Differential geometry of curves and surfaces}.
\newblock Prentice-Hall, Inc., Englewood Cliffs, N.J., 1976.
\newblock Translated from the Portuguese.

\bibitem[dC92]{doCarmo}
Manfredo~Perdig{\~a}o do~Carmo.
\newblock {\em Riemannian geometry}.
\newblock Mathematics: Theory \& Applications. Birkh\"auser Boston, Inc.,
  Boston, MA, 1992.
\newblock Translated from the second Portuguese edition by Francis Flaherty.

\bibitem[DDG08]{MR2480063}
Anna Dall'Acqua, Klaus Deckelnick, and Hans-Christoph Grunau.
\newblock Classical solutions to the {D}irichlet problem for {W}illmore
  surfaces of revolution.
\newblock {\em Adv. Calc. Var.}, 1(4):379--397, 2008.

\bibitem[DFGS11]{MR2770424}
Anna Dall'Acqua, Steffen Fr\"ohlich, Hans-Christoph Grunau, and Friedhelm
  Schieweck.
\newblock Symmetric {W}illmore surfaces of revolution satisfying arbitrary
  {D}irichlet boundary data.
\newblock {\em Adv. Calc. Var.}, 4(1):1--81, 2011.

\bibitem[DKS02]{DKS}
Gerhard Dziuk, Ernst Kuwert, and Reiner Sch\"atzle.
\newblock Evolution of elastic curves in {$\mathbb{R}^n$}: existence and
  computation.
\newblock {\em SIAM J. Math. Anal.}, 33(5):1228--1245, 2002.

\bibitem[DLP16]{ChunAnnaPaola}
Anna Dall'Acqua, Chun-Chi Lin, and Paola Pozzi.
\newblock A gradient flow for open elastic curves with fixed length and clamped
  ends.
\newblock {\em Ann. Sc. Norm. Super. Pisa Cl. Sci.}, accepted, 2016.

\bibitem[DP14]{AnnaPaola1}
Anna Dall'Acqua and Paola Pozzi.
\newblock A {W}illmore-{H}elfrich {$L^2$}-flow of curves with natural boundary
  conditions.
\newblock {\em Comm. Anal. Geom.}, 22(4):617--669, 2014.

\bibitem[EZ98]{Eidelman}
Samuil~D. Eidelman and Nicolae~V. Zhitarashu.
\newblock {\em Parabolic boundary value problems}, volume 101 of {\em Operator
  Theory: Advances and Applications}.
\newblock Birkh\"auser Verlag, Basel, 1998.
\newblock Translated from the Russian original by Gennady Pasechnik and Andrei
  Iacob.

\bibitem[Ger06]{Gerhardt}
Claus Gerhardt.
\newblock {\em Curvature problems}, volume~39 of {\em Series in Geometry and
  Topology}.
\newblock International Press, Somerville, MA, 2006.

\bibitem[Koi96]{Koiso}
Norihito Koiso.
\newblock On the motion of a curve towards elastica.
\newblock In {\em Actes de la {T}able {R}onde de {G}\'eom\'etrie
  {D}iff\'erentielle ({L}uminy, 1992)}, volume~1 of {\em S\'emin. Congr.},
  pages 403--436. Soc. Math. France, Paris, 1996.

\bibitem[Lan83]{Lang_Analysis}
Serge Lang.
\newblock {\em Real analysis}.
\newblock Addison-Wesley Publishing Company, Advanced Book Program, Reading,
  MA, second edition, 1983.

\bibitem[Lin12]{MR2911840}
Chun-Chi Lin.
\newblock {$L^2$}-flow of elastic curves with clamped boundary conditions.
\newblock {\em J. Differential Equations}, 252(12):6414--6428, 2012.

\bibitem[LS84a]{LangerSinger2}
Joel Langer and David Singer.
\newblock Curves in the hyperbolic plane and mean curvature of tori in
  {$3$}-space.
\newblock {\em Bull. London Math. Soc.}, 16(5):531--534, 1984.

\bibitem[LS84b]{LangerSinger1}
Joel Langer and David~A. Singer.
\newblock The total squared curvature of closed curves.
\newblock {\em J. Differential Geom.}, 20(1):1--22, 1984.

\bibitem[LSU68]{MR0241821}
O.~A. Lady{\v{z}}enskaja, V.~A. Solonnikov, and N.~N. Ural'ceva.
\newblock {\em {L}inear and {Q}uasi-linear {E}quations of {P}arabolic {T}ype}.
\newblock Translations of Mathematical Monographs, Vol. 23 American
  Mathematical Society, 1968.

\bibitem[{Man}17]{2017arXiv170502177M}
R.~{Mandel}.
\newblock {Explicit formulas and symmetry breaking for Willmore surfaces of
  revolution}.
\newblock {\em ArXiv e-prints}, May 2017.

\bibitem[Mum94]{Mumford1994}
David Mumford.
\newblock {\em Elastica and Computer Vision}, pages 491--506.
\newblock Springer New York, New York, NY, 1994.

\bibitem[NO14]{OkabeNovaga}
Matteo Novaga and Shinya Okabe.
\newblock Curve shortening-straightening flow for non-closed planar curves with
  infinite length.
\newblock {\em J. Differential Equations}, 256(3):1093--1132, 2014.

\bibitem[Spe17]{STE_LP}
Adrian Spener.
\newblock Short time existence for the elastic flow of clamped curves.
\newblock {\em Mathematische Nachrichten}, pages n/a--n/a, 2017.

\bibitem[Sze68]{Szenthe}
J.~Szenthe.
\newblock On the total curvature of closed curves in {R}iemannian manifolds.
\newblock {\em Publ. Math. Debrecen}, 15:99--105, 1968.

\bibitem[Tru83]{MR714991}
C.~Truesdell.
\newblock The influence of elasticity on analysis: the classic heritage.
\newblock {\em Bull. Amer. Math. Soc. (N.S.)}, 9(3):293--310, 1983.

\bibitem[Tsu74]{Tsukamoto}
Y\^otar\^o Tsukamoto.
\newblock On the total absolute curvature of closed curves in manifolds of
  negative curvature.
\newblock {\em Math. Ann.}, 210:313--319, 1974.

\bibitem[Wen93]{Wen}
Yingzhong Wen.
\newblock {$L^2$} flow of curve straightening in the plane.
\newblock {\em Duke Math. J.}, 70(3):683--698, 1993.

\bibitem[Zei86]{ZeidlerI}
Eberhard Zeidler.
\newblock {\em Nonlinear functional analysis and its applications. {I}}.
\newblock Springer-Verlag, New York, 1986.
\newblock Fixed-point theorems, Translated from the German by Peter R. Wadsack.

\end{thebibliography}
\bibliographystyle{alpha}

\end{document}